\setlist[enumerate]{label=\emph{(\roman*)}}
\newtheorem{theorem}{Theorem}[section]
\newtheorem{corollary}[theorem]{Corollary}
\newtheorem{lemma}[theorem]{Lemma}
\newtheorem{proposition}[theorem]{Proposition}
\theoremstyle{definition}
\newtheorem{definition}[theorem]{Definition}
\newtheorem{remark}[theorem]{Remark}
\newtheorem{assumption}[theorem]{Assumption}
\numberwithin{equation}{section}
\newcommand\R{\mathbb{R}}
\newcommand\e{\epsilon}
\newcommand\T{\mathbb{T}}
\newcommand{\la}{\langle}
\newcommand{\ra}{\rangle}
\begin{document}

\parindent=0pt

\title[Dynamics for helical vortex filament]
{Long time dynamics for helical vortex filament in Euler flows}

\author[D. Guo]{Dengjun Guo}
\address{School of Mathematical Sciences,
University of Science and Technology of China, Hefei 230026, Anhui, China}
\email{guodeng@mail.ustc.edu.cn}
\author[L. Zhao]{Lifeng Zhao}
\address{School of Mathematical Sciences,
University of Science and Technology of China, Hefei 230026, Anhui, China}
\email{zhaolf@ustc.edu.cn}
\email{}

\thanks{L. Zhao is supported by NSFC Grant of China No. 12271497, No. 12341102 and the National Key Research and Development Program of China No. 2020YFA0713100.
}
\thanks{\textbf{Data Availability Statements:} Data sharing not applicable to this article as no datasets were generated or analysed during the current study.}
\begin{abstract}
We consider the three-dimensional incompressible Euler equation
\begin{equation*}\left\{\begin{aligned}
&\partial_t \Omega+U \cdot \nabla \Omega-\Omega\cdot \nabla U=0  \\
&\Omega(x,0)=\Omega_0(x)
\end{aligned}\right.
\end{equation*}
under the assumption that $\Omega^z$ is helical and in the absence of vorticity stretching. Assuming that the initial vorticity $\Omega_0$ is primarily concentrated within an $\e$ neighborhood of a helix $\Gamma_0$, we prove that its solution $\Omega(\cdot,t)$ remain concentrated near a helix $\Gamma(t)$ for any $t \in [0,T)$,  where $\Gamma(t)$ can be interpreted as $\Gamma_0$ rotating around the $x_3$ axis with a speed $V=C\log \frac{1}{\e}+O(1)$. It should be emphasized that the dynamics for the helical vortex filament are exhibited on the time interval $[0,T)$, which is longer than $\left[0, \frac{T}{\log\frac{1}{\e}}\right)$.
\end{abstract}

\maketitle

\section{Introduction}

\quad Fix a time $T>0$, we consider the three-dimensional incompressible Euler equation in $\R^3 \times [0,T)$,
\begin{equation}\label{eq 3euler}\left\{\begin{aligned}
&\partial_t \Omega+U \cdot \nabla \Omega-\Omega\cdot \nabla U=0 \\
&\Omega(x,0)=\Omega_0(x).
\end{aligned}\right.
\end{equation}
The velocity $U$ can be recovered from the vorticity $\Omega$ by the well-known Biot-Savart law
\begin{equation}\label{eq biot savart law}U(x)=-\frac{1}{4\pi}\int_{\R^3}\frac{x-y}{|x-y|^3}\wedge \Omega(y)\,dy.\end{equation}
We are interested in the case when the initial vorticity is concentrated within an $\e$ neighborhood of a smooth curve $\Gamma_0$. Such solutions are usually called vortex filaments, and the their evolution has been formally described by Kelvin \cite{Kelvin}, Levi-civita \cite{Levi} and Da Rios \cite{Da1}. Loosely speaking, its solution $\Omega(\cdot,t)$ would remain concentrated near a smooth curve $\Gamma(s,t)$, which satisfies
\begin{equation}\label{eq sec1.1 A1}
\Gamma_t=(C_1 \log \e +C_2) \Gamma_s \wedge \Gamma_{ss}.
\end{equation}
The rigorous justification of this phenomenon is known as the vortex filament problem (conjecture). The only known result for general $\Gamma_0$ is provided by Jerrad and Seis \cite{Jer}, who proved that if $\Omega(\cdot,t)$ remain concentrated near a curve $\Gamma_1(t)$, then $\Gamma_1(t)$ must approximate $\Gamma(t)$ for at least $t \lesssim \frac{1}{\log{\frac{1}{\e}}}$. Up to now, the vortex filaments conjecture is still open for general $\Gamma_0$.

\quad However, there are numerous results available for specific choices of $\Gamma_0$. The vortex filament conjecture for straight vortex filament with $\Gamma_0(s)=(0,0,s)$ has been rigorously proven in the context of Euler equation with planar symmetry. Butta and Marchioro proved in \cite{MaP1} that if the initial vorticity is concentrated in an $\e$ neighborhood of $m$ disjoint straight vortex filaments, then this concentration will persist at least for $0\le t \lesssim\log \frac{1}{\e}$. For more results about straight vortex filaments, we refer to \cite{Mar}, \cite{MaP11}, \cite{MaP}, \cite{MaP10}, \cite{Tur}. Similarly, the vortex filament conjecture for vortex ring has also been rigorously justified for the axisymmetric Euler equation without swirl. In \cite{Ben}, Benedetto, Caglioti and Marchioro showed that when the initial vorticity is concentrated in an $\e$ neighborhood of $\Gamma_0(s)=(R\cos s, -R\sin s,0)$, then its solution would concentrated near $\Gamma(t)=(R\cos s,-R\sin s,Ct\log \frac{1}{\e})$ for any $t \in \left[0, \frac{T}{\log \e}\right)$. In \cite{MaP12}, Butta, Cavallaro and Marchioro extend this result to $m$ disjoint vortex rings. Moreover, the time range is extend to the entire interval $[0,T) $ in \cite{Ma}, under the assumption that $R$ is significantly large with respect to $\e$. Turning to helical vortex filament, which initially concentrated near $\Gamma_0(x)=(\cos s, -\sin s, s)$, Adebiyi \cite{Ade} constructed stable solutions for helical Euler equation with constant swirl. Dávila,del Pino, Musso and Wei \cite{Wei} constructed solutions that rotates around $x_3$ axis with a constant speed and does not change form in the context of helical Euler equation with vanishing swirl. In \cite{Cao}, Cao and Wan constructed multiple traveling-rotating helical vortices. See also \cite{Cao2}, \cite{Cao3} and \cite{Gue} for the construction of helical vortex filaments with specific choices of initial vorticity. Recently, Donati, Lacave and Miot consider helical vortex filaments for more general initial vorticity, they proved in \cite{Miot} that any helical solution initially concentrated around
helices of pairwise distinct radii remains concentrated close to filaments, specifically for any $t\in \left[0, \frac{T}{\log \frac{1}{\e}}\right)$.

\quad The main purpose of this paper is to study the dynamics for both straight and helical vortex filament for $t \in [0,T)$ and for more general initial vorticity, in the context of Euler equation with helical symmetry. To achieve this, we first review some known results for the Euler equation with helical symmetry. Regarding helical solutions, in \cite{Dut} Dutrifoy established the global well-posedness of smooth solutions in bounded helical domains. A key observation is that the vorticity $\Omega=\left( \Omega^x, \Omega^y, \Omega^z \right)$ can be rewritten as
 $$
 \Omega(x,t)=\xi(x)\Omega^z(x,t),
 $$
 where $\xi(x):=(x_2,-x_1,1)$ and $\Omega^z$ satisfies the three-dimensional helical Euler equation
\begin{equation}\label{eq 3euler z}
\partial_t \Omega^z+U \cdot \nabla \Omega^z=0.
\end{equation}

Setting $w(x_1,x_2)=\Omega^z(x_1,x_2,0)$, Ettinger and Titi \cite{ET} reduced the three-dimensional helical Euler equation to a two-dimensional problem:
    \begin{equation*}
\partial_t w+\nabla^{\perp}L_H^{-1}w \cdot \nabla w=0,
\end{equation*}

     where $L_H$ is a specific elliptic operator. They established the global existence and uniqueness of weak solutions to this two-dimensional problem in bounded helical domains. In the whole space $\R^3$, the global existence of weak solutions has been proved by Bronzi, Lopes and Lopes \cite{BLN} for initial vorticity belonging to $L^1\bigcap L^p(\R^3)$ with compact support. Subsequently, the assumption of the compact support was removed by Jiu, Li and Niu \cite{JLN}. In \cite{GZ1}, the authors further reduced the three-dimensional helical Euler equation to the two-dimensional helical Euler equation:
     \begin{equation}\label{eq 2euler}
\partial_t w +Hw\cdot \nabla w=0,
\end{equation}
where $Hw$ is a divergence-free velocity field defined as
\begin{equation*}
Hw(x)=\int_{\R^2} H(x,y)w(y)\,dy
\end{equation*}
and $H(x,y)$ is the modified Biot-Savart kernel described in Proposition \ref{prop 3 euler}. They established global well-posedness for weak solutions in $L^1_1 \bigcap L^{\infty}_1$. It is worth noting that, unlike the axisymmetric or planar Euler equations, the Biot-Savar law for helical Euler equations (see, e.g. \cite{Ade}, \cite{GZ1}, \cite{JLN}) is usually given by
\begin{equation*}
U(x):=-\frac{1}{4\pi}\int_{\R^3}\frac{x-y}{|x-y|^3}\wedge \Omega(y)\,dy+(0,0,W),
\end{equation*}
where $W\in \R$ is a constant.  In \cite{JLN}, Jiu, Li and Niu chose $$W=\frac{1}{4\pi^2}\int_{\R^2\times \T}\Omega_0^z(y)\,dy$$ to ensure the vanishing swirl condition. While in \cite{Ade}, Adebiyi construct stable helical vortex filament by setting $$W=-C_1\log\frac{1}{\e}-C_2+o(1)$$ to eliminate the influence of the rotation induced by the helix itself. Note that our main purpose of this paper is to prove vortex filament conjecture in helical setting. So we will consistently assume $W=0$ and focus on the helical Euler equation as outlined in \cite{GZ1}. It is worth mentioning that our method would remain valid for any $W \in \R$ that is independent of $\e$.

\quad Next we describe the vortex filament conjecture for helical Euler equation. As demonstrated in \cite{GZ1}, if $w(x,t)$ solves the two-dimensional helical Euler equation \eqref{eq 2euler}, then $\Omega(x,t):=(x_2,-x_1,1)w\left(R_{-x_3}(x_1,x_2),t\right)$ solves the three-dimensional Euler equation \eqref{eq 3euler}. Therefore, for the two-dimensional helical Euler equation \eqref{eq 2euler}, the vortex filament conjecture reads: if $w_{\e,0}(x)$ is concentrated near $\sigma=(\sigma_1,\sigma_2)$, then its solution $w_{\e}(x,t)$ will rotate uniformly around origin with a speed of $V=C_{1,\sigma}\log\frac{1}{\e}+C_{2,\sigma}$. The case $\sigma=0$ corresponds to straight vortex filament and $\sigma \neq 0$ corresponds to helical vortex filament. As we will see in Section $2$, the conservation of the second momentum
$$
M(t):=\int_{\R^2} |x|^2w(x,t)\,dx
$$
directly leads to the existence of straight vortex filament (Theorem \ref{thm sec2 A1}) and the convergence of the helical filament to straight filament when $\sigma \to 0$ (Corollary \ref{co sec2 A1}). Moreover, the evolution of $w_{\e,\sigma}(\cdot)$ for different values of $\sigma \neq 0$ can be treated similarly. Hence for simplicity, we will solely consider the case when $\sigma=(1,0)$ and assume
$$
\int_{\R^2} w_{\e,0}(x)\,dx =1.
$$

The main result is stated as follows (for a
precise definition of weak solutions and the relevant function spaces, please refer to Section 2).
\begin{theorem}\label{thm GZ main}
Given $T>0$, let $\,w_{\e}(x,t)$ be a weak solution to the two-dimensional helical Euler equation \eqref{eq 2euler} with initial data $w_{\e,0}(x) $. Assuming in addition that $w_{\e,0}(x) \in L^1_4 \bigcap L^{\infty}_4$ satisfies Assumption \ref{as initial data}, then for any $t \in [0,T)$ and $0<\alpha<1$, there exist $\e_0>0$, $p_{\e}^*(t) \in \R^2$ and a constant $C>0$ independent of $\e, \alpha$ such that
\begin{equation*}
\int_{|x-p_{\e}^*(t)|>\e^{1-\alpha}}\langle x \rangle^2 w_{\e}(x,t)\,dx \le \frac{C}{\alpha \log \frac{1}{\e}}
\end{equation*}
holds whenever $\e \le \e_0$. Furthermore, define $$p_{\e}(t)=\int_{\R^2} xw_{\e}(x,t)\,dx,$$ then for any $t\in [0,T)$, there hold
\begin{equation*}\begin{aligned}
|p_{\e}^*(t)-p_{\e}(t)|&=O\left(\frac{1}{\log\frac{1}{\e}}\right), \\
|p_{\e}(t)|&=1+O\left(\frac{1}{\log\frac{1}{\e}}\right)
\end{aligned}\end{equation*}
and
\begin{equation*}
p_{\e}(t)=\left( \cos \left(V_{\e}t \right),\sin \left(V_{\e}t \right) \right) +O\left( t  \right),
\end{equation*}
where
$$
V_{\e}=-\frac{\sqrt2}{8\pi}\log\frac{1}{\e}.
$$
\end{theorem}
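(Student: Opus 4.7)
The plan is to follow the Marchioro-type philosophy for vortex concentration, adapted to the helical Euler equation \eqref{eq 2euler} and pushed from the short window $[0,T/\log\frac{1}{\e})$ used in \cite{Miot} to the full interval $[0,T)$. The two conserved quantities that drive everything are the total mass $\int w_\e\,dx \equiv 1$ and the second moment $M(t) := \int |x|^2 w_\e(x,t)\,dx \equiv M(0)$; mass conservation is immediate from $\nabla\cdot(Hw_\e)=0$, while conservation of $M(t)$ rests on the antisymmetry of the modified Biot--Savart kernel $H(x,y)$ paired against $|x|^2$, in analogy with the classical planar case. Since $w_{\e,0}$ is $\e$-concentrated at $(1,0)$ with unit total mass, $M(0) = 1 + o(1)$, and hence $M(t) = 1 + o(1)$ for all $t$.

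\medskip

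Step 1 is the concentration estimate. Set $J_\e(t) := \int |x-p_\e(t)|^2 w_\e(x,t)\,dx = M(t) - |p_\e(t)|^2$. Differentiating and using $\dot p_\e = \int Hw_\e\, w_\e\,dx$, the bulk--bulk self-interaction cancels by the antisymmetry of $H$, leaving two pieces: (a) a near-diagonal contribution controlled by the logarithmic singularity of $H$ together with the smallness of the vorticity outside an $\e^{1-\alpha}$-neighborhood of $p_\e$, which is $O(1/\log\frac{1}{\e})$, and (b) a far-field contribution handled by the smoothness of $H$ off the diagonal combined with the $L^\infty$ transport bound on $w_\e$. This yields a differential inequality of the form $\dot J_\e \le C J_\e + C/\log\frac{1}{\e}$ whose Gronwall integration on $[0,T)$ gives $J_\e(t) \lesssim 1/\log\frac{1}{\e}$. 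Choosing $p_\e^*(t)$ as a suitable bulk center close to $p_\e(t)$ and applying a Chebyshev-type argument on the shell $|x-p_\e^*(t)| > \e^{1-\alpha}$ produces the stated weighted bound; the weight $\langle x\rangle^2$ is absorbed because the far tails of $w_\e$ feed $M(t)$, which is conserved and $O(1)$.

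\medskip

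Step 2 is the angular motion. Once $J_\e(t) \lesssim 1/\log\frac{1}{\e}$, the identity $J_\e = M - |p_\e|^2$ forces $|p_\e(t)| = 1 + O(1/\log\frac{1}{\e})$, and the closeness $|p_\e^*(t)-p_\e(t)| = O(1/\log\frac{1}{\e})$ follows from the same concentration estimate. To read off the rotation speed, expand
\begin{equation*}
\dot p_\e(t) = \int_{\R^2} Hw_\e(x,t)\,w_\e(x,t)\,dx
\end{equation*}
via a near-diagonal Taylor expansion of $H(x,y)$ at the base point $p_\e(t)$, using the explicit structure of $H$ supplied by Proposition~\ref{prop 3 euler}. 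The logarithmic part of $H$ contributes $V_\e\, p_\e(t)^\perp$ with $V_\e = -\tfrac{\sqrt 2}{8\pi}\log\frac{1}{\e}$, the coefficient being determined by the pointwise local structure of $H$ at $(1,0)$; the remainder is $O(1)$. Integrating the resulting ODE yields $p_\e(t) = (\cos V_\e t,\sin V_\e t) + O(t)$.

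\medskip

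The principal obstacle is making every estimate uniform on the whole interval $[0,T)$ rather than on $[0,T/\log\frac{1}{\e})$. Since the angular speed is of order $\log\frac{1}{\e}$, the filament undergoes roughly $\log\frac{1}{\e}$ full revolutions, so any perturbation that gains a factor $O(1/\log\frac{1}{\e})$ per revolution would saturate the concentration. The essential structural input is that the leading-order self-induced dynamics is exactly a rigid rotation about the $x_3$-axis, and therefore commutes with \eqref{eq 2euler}; conceptually, passing to a corotating frame of angular velocity $V_\e$ renders the filament nearly stationary and removes the large rotation rate from the Gronwall constants. Implementing this cancellation rigorously --- in particular verifying that the coefficients in Step 1's differential inequality are genuinely $\e$-independent rather than secretly of size $\log\frac{1}{\e}$, and carefully tracking the $\alpha$-dependence to produce the $1/(\alpha\log\frac{1}{\e})$ factor --- is the delicate core of the argument.
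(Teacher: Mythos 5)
There is a genuine gap, and it sits exactly where your proposal deviates from the paper: your concentration step (Step 1) does not work, and the tool the paper actually uses --- conservation of the energy --- never appears in your proposal. First, your claimed differential inequality $\dot J_\e \le C J_\e + C/\log\frac1\e$ is circular: to control the near-diagonal contribution you invoke ``smallness of the vorticity outside an $\e^{1-\alpha}$-neighborhood of $p_\e$,'' which is precisely the concentration statement being proven. Without that input, the honest estimate is much worse: since $M_1$ is conserved, $\dot J_\e = -2p_\e\cdot\dot p_\e$, and writing $p_\e\cdot x^{\perp} = p_\e\cdot(x-p_\e)^{\perp}$ inside the velocity integral, the best a priori bound is $|\dot J_\e|\lesssim \log\frac1\e\, J_\e^{1/2}+1$ (the velocity near the filament has size $\log\frac1\e$), which integrates to $J_\e(t)=O\left(t^2\left(\log\frac1\e\right)^2\right)$ --- useless on $[0,T)$. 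Your corotating-frame remedy does not break this circle, because $J_\e$ is invariant under rigid rotations about the origin, so changing frame leaves the problematic term untouched. Second, even granting $J_\e\lesssim 1/\log\frac1\e$, a Chebyshev argument on the shell $|x-p_\e^*|>\e^{1-\alpha}$ yields only $J_\e\,\e^{-2(1-\alpha)}$, which is enormous; a variance bound is structurally incapable of producing concentration at the scale $\e^{1-\alpha}$ with defect $O(1/(\alpha\log\frac1\e))$ (note $J_\e(0)=O(\e)\gg \e^{2-2\alpha}$ already at $t=0$). The paper instead proves concentration by a Turkington-type potential argument: Section 3 shows the conserved 3D energy satisfies $E(t)=\int\int\log\frac{1}{|x-y|}\mathbf{1}_{|x-y|\le1}\la x\ra\, w w\,dx\,dy+O(1)$, hence $E^*(t)=\sqrt2\log\frac1\e+O(1)$ for all $t$; combining this lower bound with the pointwise bound $Gw_\e\le\log\frac1\e+C$ (from $\|w_\e\|_{L^\infty}\lesssim\e^{-2}$) forces all but $O(1/(\alpha\log\frac1\e))$ of the mass into a set of diameter $\le 2\e^{1-\alpha}$. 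Because this rests only on conserved quantities, it is automatically uniform in time --- no Gronwall, no error accumulating over the $\sim\log\frac1\e$ revolutions --- and that is precisely how the paper passes from $\left[0,\frac{T}{\log\frac1\e}\right)$ to $[0,T)$. (The only Gronwall arguments in the paper, for $M_2$ and the cut-off moment $M^*$, have genuinely $\e$-independent coefficients because they never see the rotation speed.)

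The energy is equally indispensable for your rotation-speed computation. To get $V_\e=-\frac{\sqrt2}{8\pi}\log\frac1\e$ with an $O(1)$ error one must show $\int\int\log\frac{1}{|x-y|}\mathbf{1}_{|x-y|\le1}\frac{1}{\la x\ra}w w\,dx\,dy=\frac{1}{\sqrt2}\log\frac1\e+O(1)$. Concentration at scale $\e^{1-\alpha}$ with mass defect $O(1/(\alpha\log\frac1\e))$ only traps this self-interaction integral between roughly $(1-\alpha)\log\frac1\e-C/\alpha$ and $\log\frac1\e+C$, i.e.\ with an error $\alpha\log\frac1\e+O(1/\alpha)$ that cannot be reduced to $O(1)$ by any choice of $\alpha$; so a ``pointwise local structure of $H$ at $(1,0)$'' Taylor expansion cannot pin down the stated speed. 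The paper extracts the exact coefficient from the conserved $E^*$ via the splitting $\frac{1}{\la x\ra}=\frac{1-|x|^2}{2\la x\ra}+\frac{\la x\ra}{2}$, so that the $\frac{\la x\ra}{2}$ piece equals $E^*/2=\frac{1}{\sqrt2}\log\frac1\e+O(1)$ and the remainder is controlled by $\int\big|1-|x|\big|w\,dx=O(1/\log\frac1\e)$. In short, both the time-uniform concentration and the $O(1)$-precision of $V_\e$ hinge on the energy identity developed in Sections 3--4, and a proof built only on mass and second-moment conservation, as yours is, cannot reach either conclusion.
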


$$$$
\begin{remark}$\,$

\begin{enumerate}

    \item   The conditions stated in Assumption \ref{as initial data} are natural and can be verified directly if the initial data has the form $$w_{\e,0}(x)=\frac{1}{\e^2}\eta(\frac{x-(1,0)}{\e}),$$ where $\eta$ is a non-negative function with sufficient decay at infinity and satisfies $$\int_{\R^2} \eta(x)\,dx=1, \,\,\,\,\,\, \int_{\R^2} x\eta(x)\,dx=0.$$
   \item To our best knowledge, except for straight vortex filament, the evolution for general initial vorticity has only been derived for a limited time interval $t\in \left[0, \frac{T}{\log\frac{1}{\e}}\right)$. The long time dynamic is even unknown for vortex rings. Our results demonstrate that the helical vortex filament rotates around the origin with a speed $V=V_{\e}+O(1)$ for $t\in [0, T)$, which can be regarded as a weak version of the vortex filament conjecture \eqref{eq sec1.1 A1} in helical setting.
\end{enumerate}
\end{remark}
Our method is different from previous work on vortex rings and helical vortex filaments, which rely on a detailed analysis of the Lagrangian trajectories to Euler equations and require that the vorticity has compact support. In this article, we have developed a new method to investigate the helical vortex filament. Notably, we do not use the Lagrangian approach and make no assumptions about the support of the initial vorticity. As we will see in Section 5, this new method is well-suited for studying the long-time behavior of helical vortex filament.
Our approach involves demonstrating that the energy for \eqref{eq 3euler z}  behaves similarly to the $2D$ case,
$$
E(t) \approx \int_{\R^2} \int_{\R^2} \log \frac{1}{|x-y|}w(x,t)w(y,t)\,dx\,dy.
$$
To accomplish this, we first establish more delicate estimates for the Biot-Savart kernel than those previously achieved in \cite{GZ1}. By combining the estimates for energy, momentum and the distance function defined in Section $4$, we are able to show that $w_{\e}(\cdot,t)$ remains concentrated near its center of gravity $p_{\e}(t)$ for any time $t \ge 0$, in the sense that
\begin{equation*}
 \int_{|x-p^*_{\e}(t)|\ge \e^{1-\alpha}} \la x\ra w_{\e}(x,t)\,dx \lesssim \frac{1}{\log \frac{1}{\e}}
 \end{equation*}
 for any $0<\alpha <1$. Next we control \begin{equation}\label{eq sec1 B9}\int_{|x-p^*_{\e}(t)|\ge \e^{1-\alpha}}\la x\ra^2 w_{\e}(x,t)\,dx \end{equation} with the assistance of a crucial estimate for
\begin{equation*}
M^*(t):=\int_{\R^2} |x|^2 \eta(|x|)w_{\e}(x,t)\,dx,
\end{equation*}
where $0\le \eta \le 1$ is a smooth positive cut-off function satisfies
\begin{equation}\eta(r)=\begin{cases}
0 \quad r \le 10 \\
1 \quad r>20.
\end{cases}\end{equation}
 Subsequently, through a functional equality linking the distance function, the second momentum, and the center of gravity, the control for \eqref{eq sec1 B9} directly implies that $|p_{\e}(t)|=1+O\left(\frac{1}{\log \frac{1}{\e}}\right)$. Finally, we show that the the center of gravity, $p_{\e}(t)$, satisfies the differential equation $$\frac{dp_{\e}}{dt}=V_{\e}p_{\e}^{\perp}+O(1),$$ which serves as the conclusive step in establishing the long-time dynamics for helical vortex filament.

\quad Our article is organized as follows: In Section $2$, we provide a comprehensive overview of helical Euler equations. As a straightforward consequence of Theorem \ref{thm main}, we prove the existence of straight vortex filament and the convergence of helical vortex filament to the straight vortex filament when $\sigma \to 0$. Section 3 focuses on obtaining delicate estimates for the stream function $\left( -\Delta\right)^{-1}\Omega$ and the velocity $Hw$. In Section $4$, we introduce and analyze several crucial quantities. Finally, in Section $5$, we prove Theorem \ref{thm GZ main} by using the crucial estimates obtained in previous section.

\section{Mathematical preliminaries and main result. }
The main purpose of this section is to fix notations and review some known results for helical Euler equations. For simplicity of presentation, we usually refer a point $x=(x_1,x_2,x_3) \in \R^3$ or $\R^2\times \T$ to $x=(x',x_3)$. We adopt the Japanese bracket notation, defined as $\la x \ra =\sqrt{1+|x|^2}$. The notation $a \lesssim b$ means that there exists a constant $C>0$ independent of $x,y,\e$ (but possibly dependent on $T$) such that $a \le Cb$. Additionally, $a=O(b)$ denotes that $|a| \lesssim |b|$.
\subsection{Helical functions and vector fields. }

\begin{definition}[helical function]\label{def helical function}
A function $f: \R^3 \to \R$ is called helical if for all $\theta \in \R$ and almost every $x\in \R^3$,
\begin{equation*}
f(S_{\theta}x)=f(x),
\end{equation*}
where
\begin{equation*}
S_{\theta}x:=R_{\theta}x+\begin{pmatrix} 0 \\ 0 \\ \theta \end{pmatrix}
\end{equation*}
and
\begin{equation*}
R_{\theta}:= \begin{pmatrix}  \cos{\theta} & \sin{\theta} & 0 \\ -\sin{\theta} & \cos{\theta} & 0 \\ 0 & 0 &1 \end{pmatrix}.
\end{equation*}
\end{definition}

\begin{definition}[helical vector field]\label{def helical vector field}
A vector field $u: \R^3 \to \R^3$ is called helical if for all $\theta \in \R$,
\begin{equation*}
R_{-\theta}u(S_{\theta}(x))=u(x)
\end{equation*}
for almost every $x \in \R^3$.

\end{definition}
 Assume $f$ is a continuous helical function and $u$ is a continuous helical vector field. With a slight abuse of notation, we define $R_{\theta}x=\begin{pmatrix}  \cos{\theta} & \sin{\theta}  \\ -\sin{\theta} & \cos{\theta} \end{pmatrix} \begin{pmatrix} x_1 \\ x_2 \end{pmatrix}.$ By introducing $\mathfrak{R}=\begin{pmatrix}  0 & 1  \\ -1 & 0 \end{pmatrix}$, we observe that
 $$R_ax=e^{a\mathfrak{R}}x$$
and $$-\mathfrak{R}x=(-x_2,x_1):=x^{\perp}.$$

 From the definitions above, it follows that
\begin{equation}\label{eq helical function}
f(x_1,x_2,x_3)=f(R_{-x_3}(x_1,x_2),0)
\end{equation}
and
\begin{equation}\label{eq helical vector}
u(x_1,x_2,x_3)=R_{x_3}u(R_{-x_3}(x_1,x_2),0).
\end{equation}

Note that if $f$ and $u$ are only measurable function (vector field), then $f(x',0)$ and $u(x',0)$ are not well-defined, thus \eqref{eq helical function} and \eqref{eq helical vector} do not make sense. However, we have the following:
\begin{lemma}[\cite{GZ1}]\label{le sec2 C7}
Let $f$ be a locally bounded helical function and $u$ be a locally bounded helical vector field. Define
$$
\tilde{f}(x')=\frac{1}{2\pi}\int_{-\pi}^{\pi} f(R_{a}x',a) \,da
$$
and
$$
\tilde{u}(x')=\frac{1}{2\pi}\int_{-\pi}^{\pi} R_{-a}u(R_{a}x',a) \,da.
$$
Then for almost every $x\in \R^3$, there hold
\begin{equation*}
f(x',x_3)=\tilde{f}(R_{-x_3}x')
\end{equation*}
and
\begin{equation*}
u(x',x_3)=R_{x_3}\tilde{u}(R_{-x_3}x').
\end{equation*}

\end{lemma}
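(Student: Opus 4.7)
For continuous $f, u$ the identities follow at once by choosing $\theta = -x_3$ in the helical invariance, so that $f(x', x_3) = f(S_{-x_3}(x', x_3)) = f(R_{-x_3} x', 0)$, and analogously for $u$. For merely locally bounded data this pointwise evaluation on the single slice $\{x_3 = 0\}$ is not well-defined, so my strategy is to replace it by the period-average $\tilde f$ (respectively $\tilde u$) and to verify the equality via a Fubini argument that averages the helical invariance along the helix through $x$.

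The first step I would carry out is to upgrade the stated invariance from ``for every $\theta$, a.e.\ $x$'' to ``for a.e.\ $x$, a.e.\ $\theta$.'' Since the action $(x, \theta) \mapsto S_\theta x$ is continuous on $\R^3 \times \R$ and $f$ is measurable, the difference $(x, \theta) \mapsto f(S_\theta x) - f(x)$ is jointly measurable. By hypothesis its $x$-slices vanish a.e.\ for every $\theta$, so Tonelli forces it to vanish a.e.\ on $\R^3 \times \R$, and reapplying Tonelli in the reverse order yields a full-measure set $E \subset \R^3$ such that for every $x \in E$ the identity $f(S_\theta x) = f(x)$ holds for almost every $\theta \in \R$.

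Next I would perform the change of variable $a = x_3 + \theta$. Writing $S_\theta(x', x_3) = (R_\theta x', x_3 + \theta)$ and using $R_\theta = R_a R_{-x_3}$, the invariance becomes
$$f\bigl(R_a (R_{-x_3} x'), a\bigr) = f(x', x_3) \quad \text{for a.e.\ } a \in \R,$$
for every $x = (x', x_3) \in E$. Averaging over $a \in (-\pi,\pi)$ and normalising by $2\pi$ yields exactly $f(x', x_3) = \tilde f(R_{-x_3} x')$. The vector-field case proceeds identically modulo one rotation factor: from $R_{-\theta} u(S_\theta x) = u(x)$ the same change of variable produces
$$R_{-(a - x_3)}\, u\bigl(R_a (R_{-x_3} x'), a\bigr) = u(x', x_3) \quad \text{for a.e.\ } a \in \R,$$
and averaging in $a$ and then multiplying by $R_{x_3}$ (which commutes with the $a$-integral) gives $u(x', x_3) = R_{x_3} \tilde u(R_{-x_3} x')$.

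The only substantive obstacle is the Fubini upgrade in the opening step, whose validity hinges on joint measurability of $(x, \theta) \mapsto f(S_\theta x)$; this is automatic from continuity of the helical action and measurability of $f$. Once this is in hand, the remainder of the argument is routine: the formal continuous identity $f(x', x_3) = f(R_{-x_3} x', 0)$ is simply replaced by its period-average, which remains well-defined for almost every $x$ when the data are only locally bounded.
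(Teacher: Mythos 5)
This lemma is imported into the paper from \cite{GZ1} and stated without proof, so there is no internal argument to compare against; judging your proposal on its own merits, it is correct and complete, and it is the natural proof. The decisive idea is exactly the one you isolate: the definition of helicity gives invariance ``for every $\theta$, a.e.\ $x$,'' and a Tonelli--Fubini argument on the null set $\{(x,\theta): f(S_\theta x)\neq f(x)\}$ upgrades this to ``for a.e.\ $x$, a.e.\ $\theta$,'' after which the substitution $a=\theta+x_3$, the identity $R_{a-x_3}=R_aR_{-x_3}$, and averaging over $a\in(-\pi,\pi)$ produce $f(x',x_3)=\tilde f(R_{-x_3}x')$, with the vector case identical up to factoring $R_{-(a-x_3)}=R_{x_3}R_{-a}$ and pulling the constant matrix $R_{x_3}$ out of the integral. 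One point deserves tightening: joint measurability of $(x,\theta)\mapsto f(S_\theta x)$ is not literally ``automatic'' from continuity of the action plus measurability of $f$, since the composition of a Lebesgue measurable function with a continuous map can fail to be measurable. What makes it work here is that $\Phi(x,\theta)=(S_\theta x,\theta)$ is a diffeomorphism of $\R^3\times\R$, so $\Phi^{-1}$ carries null sets to null sets and hence $\Phi$ pulls Lebesgue measurable functions back to Lebesgue measurable ones; inserting this one-line justification (and noting that local boundedness makes the $a$-averages finite) closes the only loose end in your write-up.
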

$$$$

Motivated by \eqref{eq helical function}, \eqref{eq helical vector}, by slight abuse of notations, we may refer to $\tilde{f}(x')$ as $f(x',0)$, and refer to $\tilde{u}(x')$ as $u(x',0)$.

Next, we summarize the following results for Euler equations with helical symmetry.

\begin{proposition}[\cite{GZ1}\,Two-dimensional helical Euler equation]\label{prop 3 euler}
Let $\Omega^z$ be a smooth helical solution to the three-dimensional helical Euler equation \eqref{eq 3euler z}. We define the two-dimensional modified Biot-Savart kernel
\begin{equation*}
K(x,y)=-\frac{1}{4\pi} \int_{\R} \frac{(x_1,x_2,0)-\left(R_{a}(y),a\right)}{|(x_1,x_2,0)-\left(R_{a}(y),a\right)|^3} \wedge \xi(\left(R_{a}(y),a\right))\,da.
\end{equation*}
Set $w(x_1,x_2)=\Omega^z(x_1,x_2,0)$,
\begin{equation*}
U(x_1,x_2)=\int_{\R^2} K(x,y)w(y)\,dy
\end{equation*}
and
\begin{equation*}
Hw:=H_1w+H_2w=(U^1,U^2)+(-x_2,x_1)U^3.
\end{equation*}
Then $w$ satisfies the two-dimensional helical Euler equation
\begin{equation*}
\partial_t w+Hw\cdot \nabla w=0
\end{equation*}
with $\nabla \cdot Hw=0$.

\end{proposition}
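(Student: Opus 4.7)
The plan is to derive the two-dimensional equation for $w$ from the three-dimensional helical Euler equation \eqref{eq 3euler z} evaluated at $x_3=0$, after first reducing the 3D Biot--Savart integral for $U(x',0)$ to a 2D integral over $\R^2$ by exploiting the helical structure; the divergence-free property of $Hw$ will then follow from 3D incompressibility combined with the helical symmetry of $U^3$.

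First I would reduce $U(x',0)$ to the stated 2D integral. The key point is that $\Omega=\xi\Omega^z$ is itself a helical vector field, since both $\xi$ and $\Omega^z$ are helical. Thus along the helix through $(z,0)$,
\begin{equation*}
\Omega(R_a z,a)=R_a\,\Omega(z,0)=R_a\,\xi(z,0)\,w(z),
\end{equation*}
and a direct computation shows $R_a\xi(z,0)=\xi(R_a z,a)$. Performing the change of variables $y=(R_a z,a)$ with $z\in\R^2$, $a\in\R$ in \eqref{eq biot savart law} at $x=(x',0)$ (the Jacobian is $1$, since this map is a rigid motion for each fixed $a$) and carrying out the $a$-integral first yields exactly the kernel $K(x',z)$ from the statement, giving $U(x',0)=\int_{\R^2}K(x',z)w(z)\,dz$.

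Next I would read off the equation for $w$. Evaluating \eqref{eq 3euler z} at $x=(x',0)$ gives $\partial_t w(x')+U(x',0)\cdot\nabla\Omega^z(x',0)=0$, so I need $\nabla\Omega^z$ at $x_3=0$. Using the helical identity $\Omega^z(x',x_3)=w(R_{-x_3}x')$ from Lemma \ref{le sec2 C7} together with $\partial_{x_3}(R_{-x_3}x')|_{x_3=0}=-\mathfrak{R}x'=(-x_2,x_1)$, I obtain $\partial_i \Omega^z(x',0)=\partial_i w$ for $i=1,2$ and $\partial_3 \Omega^z(x',0)=-x_2\,\partial_1 w+x_1\,\partial_2 w$. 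Substituting gives
\begin{equation*}
\partial_t w+(U^1-x_2 U^3)\partial_1 w+(U^2+x_1 U^3)\partial_2 w=0,
\end{equation*}
which is $\partial_t w+Hw\cdot\nabla w=0$ by the definition of $Hw$.

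For the divergence-free condition I would compute directly
\begin{equation*}
\nabla\cdot Hw=\partial_1(U^1-x_2U^3)+\partial_2(U^2+x_1U^3)=(\partial_1U^1+\partial_2U^2)+(x_1\partial_2U^3-x_2\partial_1U^3).
\end{equation*}
Three-dimensional incompressibility gives $\partial_1U^1+\partial_2U^2=-\partial_3U^3$, and since $U$ is helical the third component $U^3$ is a helical scalar: $U^3(R_\theta x',x_3+\theta)=U^3(x',x_3)$. Differentiating in $\theta$ at $\theta=0$ yields $\partial_3U^3=-\mathfrak{R}x'\cdot\nabla_{x'}U^3=x_1\partial_2U^3-x_2\partial_1U^3$, and the two contributions cancel. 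The only real bookkeeping hazard is keeping the signs in the rotation derivatives consistent and verifying the identity $R_a\xi(z,0)=\xi(R_az,a)$ that identifies the 3D integrand with the kernel $K$; past that, everything is routine.
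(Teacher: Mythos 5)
Your proposal is correct. Note that this paper does not actually prove the proposition --- it is imported verbatim from \cite{GZ1} --- so there is no in-paper proof to compare against; your argument is the natural symmetry-reduction derivation that the cited reference follows. All three of your key identities check out under the paper's conventions: $\xi(S_\theta x)=R_\theta\xi(x)$ (so $\Omega=\xi\Omega^z$ is a helical vector field and the slice-by-slice change of variables $y=(R_a z,a)$ turns the 3D Biot--Savart integral at $x_3=0$ into $\int_{\R^2}K(x',z)w(z)\,dz$), the chain rule $\partial_3\Omega^z(x',0)=-x_2\partial_1 w+x_1\partial_2 w$ coming from $\Omega^z(x',x_3)=w(R_{-x_3}x')$ and $\partial_{x_3}(R_{-x_3}x')\big|_{x_3=0}=-\mathfrak{R}x'$, and the cancellation $\nabla\cdot Hw=(\partial_1U^1+\partial_2U^2)+(x_1\partial_2U^3-x_2\partial_1U^3)=-\partial_3U^3+\partial_3U^3=0$, which uses both the divergence-free property of the 3D Biot--Savart field and the fact that $U^3$ is a helical scalar.
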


\subsection{Function spaces and the definition of weak solutions}
In this subsection, we introduce functional spaces and the definition of weak solutions.
The three-dimensional weighted $L^p_m(\R^2 \times \T)$ norm is defined by
\begin{equation*}
\|f\|_{L^p_m(\R^2\times \T)}:=\int_{\R^2\times \T}(1+y_1^2+y_2^2)^{\frac{pm}{2}} |f(y_1,y_2,y_3)|^p\,dy^{\frac{1}{p}}
\end{equation*}
for $1\le p< \infty$ and
$$\|f\|_{L^{\infty}_m(\R^2\times \T)}:=\sup_{x\in \R^3}(1+x_1^2+x_2^2)^{\frac{m}{2}} |f(x_1,x_2,x_3)|.$$
Similarly, the two-dimensional weighted $L^p_m(\R^2)$ norm is defined by
\begin{equation*}
\|g\|_{L^p_m(\R^2)}:=\int_{\R^2}\la y \ra^{pm} |g(y_1,y_2)|^p\,dy^{\frac{1}{p}}
\end{equation*}
for $1\le p< \infty$ and
$$\|f\|_{L^{\infty}_m(\R^2)}:=\sup_{x\in \R^2}\la x \ra^{m} |f(x_1,x_2)|.$$
The norms $L^p_m(\R^2 \times \T)$ and $L^p_m(\R^2)$ are related as follows:
\begin{lemma}\label{le norm of R^2 and R^3}
Let $f \in L^p_m(\R^2)$ and define $F: \R^3 \to \R$ by
\begin{equation*}
F(x',x_3):=f(R_{-x_3}x').
\end{equation*}
Then for any $p \in [1,+\infty]$,
\begin{equation*}
\|f\|_{L^p_m(\R^2)}=(2\pi)^{-1/p}\|F\|_{L^p_m(\R^2 \times \mathbb{T})}.
\end{equation*}
\end{lemma}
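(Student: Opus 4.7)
The plan is to exploit two elementary facts: the Japanese-bracket weight $\la x'\ra = \sqrt{1+|x'|^2}$ depends only on $|x'|$ and is therefore invariant under the rotation $R_{-x_3}$ acting on $\R^2$, and once this rotation is absorbed by a change of variables the integrand in the $\R^2\times\T$ norm becomes independent of $x_3$, so integration over $\T$ contributes only the factor $2\pi$.

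First I would treat the case $1 \le p < \infty$. Starting from the definition,
\begin{equation*}
\|F\|_{L^p_m(\R^2\times \T)}^p = \int_{\T}\int_{\R^2} \la x'\ra^{pm}\,|f(R_{-x_3}x')|^p\,dx'\,dx_3.
\end{equation*}
For each fixed $x_3 \in \T$, I would perform the change of variables $y' = R_{-x_3}x'$ in the inner integral. Since $R_{-x_3}$ is an orthogonal transformation on $\R^2$, its Jacobian determinant is $1$ and $|x'| = |y'|$, so $\la x'\ra = \la y'\ra$. Consequently the inner integral equals $\|f\|_{L^p_m(\R^2)}^p$ independently of $x_3$, and integrating over the torus of length $2\pi$ followed by taking the $p$-th root gives exactly the factor $(2\pi)^{1/p}$ claimed.

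For the endpoint case $p = \infty$, the same rotational change of variables shows that the image of the map $(x',x_3) \mapsto \la x'\ra^m |F(x',x_3)|$ on $\R^2\times\T$ coincides pointwise with the image of $y' \mapsto \la y'\ra^m |f(y')|$ on $\R^2$, so the two suprema are equal, consistent with the convention $(2\pi)^{-1/\infty} = 1$. There is no genuine obstacle here: the only point that must be noted explicitly is the rotation-invariance of the weight $\la\cdot\ra$, which is immediate from its definition and is what allows the $L^p_m$-norm (as opposed to a non-radial weighted norm) to factor cleanly over $\R^2 \times \T$.
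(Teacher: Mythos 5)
Your proof is correct, and it is the standard argument this lemma calls for: the paper itself states the lemma without proof, treating it as routine, and your computation (Fubini, the rotation $y'=R_{-x_3}x'$ with unit Jacobian and rotation-invariant weight $\la\cdot\ra$, then integration over $\T$ yielding the factor $2\pi$, with the supremum argument at $p=\infty$) is exactly the justification it implicitly relies on.
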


We will now formulate the weak problem of the two-dimensional helical Euler equation \eqref{eq 2euler} and the three-dimensional helical Euler equation \eqref{eq 3euler z}.
\begin{definition}
(Weak solutions to the three-dimensional helical Euler equation.) We say that $\Omega^z(x,t)$ is a weak solution to \eqref{eq 3euler z} if
\begin{equation*}
\int_{\R^3} \Omega^z(x,t)\phi(x,t) \,dx-\int_{\R^3} \Omega^z(x,0)\phi(x,0)\,dx=\int_0^t \int_{\R^3} \Omega (\partial_t \phi +U\cdot \nabla \phi) \,dxds.
\end{equation*}
for all $t\in [0,T]$ and for all test function $\phi \in C_c^{\infty}(\R^3\times [0,+\infty))$.
\end{definition}
Similarly, we define
\begin{definition}
(Weak solutions to the two-dimensional helical Euler equation.) We say that $w(x,t)$ is a weak solution to \eqref{eq 2euler} if
\begin{equation*}
\int_{\R^2} w(x,t)\phi(x,t) \,dx-\int_{\R^2} w(x,0)\phi(x,0)\,dx=\int_0^t \int_{\R^2} w (\partial_t \phi +Hw\cdot \nabla \phi) \,dxds.
\end{equation*}
for all $t\in [0,T]$ and for all test function $\phi \in C_c^{\infty}(\R^2\times [0,+\infty))$.
\end{definition}

\begin{theorem}[\cite{GZ1}]\label{thm main}
For any positive integer $m$, the helical Euler equation \eqref{eq 2euler} and \eqref{eq 3euler z} are globally well-posed in $L^1_m\bigcap L^{\infty}_m(\R^2)$ and $L^1_m\bigcap L^{\infty}_m(\R^2\times \T)$, respectively. Moreover, whenever $m \ge 2$, the second momentum
$$
M_1(t):=\int_{\R^2\times\T} (x_1^2+x_2^2)\Omega^z(x,t)\,dx
$$
and the (pseudo) energy
$$
E(t):=\int_{\R^2\times \T} (-\Delta_{\R^2\times \T})^{-1}\Omega \cdot \Omega \,dx
$$
are conserved, where $\Omega(x,t):=(x_2,-x_1,1)\Omega^z(x,t)$.
\end{theorem}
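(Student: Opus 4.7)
The plan is to adapt Yudovich's theory to the helical setting. First, using Proposition \ref{prop 3 euler} together with Lemma \ref{le sec2 C7} and the norm equivalence in Lemma \ref{le norm of R^2 and R^3}, I would reduce the three-dimensional problem on $\R^2 \times \T$ to the two-dimensional equation \eqref{eq 2euler} on $\R^2$: a helical $\Omega^z$ solves \eqref{eq 3euler z} if and only if $w(x_1,x_2,t) := \Omega^z(x_1,x_2,0,t)$ solves \eqref{eq 2euler}, with weighted norms matching up to a constant. Hence it suffices to prove global well-posedness of \eqref{eq 2euler} in $L^1_m \cap L^{\infty}_m(\R^2)$ and to verify the conservation laws there; the three-dimensional conserved quantities then follow because $(x_1^2+x_2^2)$ is invariant under $R_{-x_3}$ and because the pseudo-energy $E(t)$ can be rewritten (using the reduction of $-\Delta_{\R^2\times\T}^{-1}$ on helical functions) as a bilinear form in $w$.

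For the 2D problem, I would use the kernel estimates developed in Section 3, which should give that $Hw$ is log-Lipschitz near the diagonal, bounded on compact sets in terms of $\|w\|_{L^1\cap L^\infty}$, and grows at most linearly in $\la x \ra$ at infinity with constant depending on $\|w\|_{L^1_m \cap L^\infty_m}$. Existence then follows by a standard approximation scheme: mollify $w_0$ to $w_0^{\delta} \in C_c^{\infty}$, solve the smooth problem classically via the method of characteristics, and pass to the limit $\delta \to 0$. The required a priori bounds are $\|w(t)\|_{L^\infty}=\|w_0\|_{L^\infty}$, which holds because the equation is transport with divergence-free velocity, together with propagation of weighted $L^1$ norms through
\begin{equation*}
\frac{d}{dt}\int \la x \ra^m w \, dx \lesssim \int \la x \ra^{m-1} |Hw| \, w \, dx \lesssim \int \la x \ra^m w \, dx,
\end{equation*}
from which Gronwall yields a global-in-time bound. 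Uniqueness is the classical Yudovich argument: the log-Lipschitz regularity of $Hw$ allows one to compare the characteristic flows of two solutions via Osgood's inequality, and continuous dependence follows by the same argument applied to a pair with close initial data.

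For the conservation laws, I would establish them first for smooth compactly supported approximants and then pass to the limit. Integration by parts against $x_1^2+x_2^2$ and use of $\nabla \cdot Hw = 0$ gives
\begin{equation*}
\frac{dM_1}{dt} = 2\int_{\R^2} w(x) \, x \cdot Hw(x) \, dx.
\end{equation*}
The rotational part $H_2 w = (-x_2,x_1)U^3$ is orthogonal to $x$ and drops out, while the horizontal part $H_1 w$ pairs with $w$ through a kernel that, after symmetrization in $x \leftrightarrow y$, becomes antisymmetric and hence integrates to zero. For the pseudo-energy, the formal computation $\frac{dE}{dt} = 2\int (-\Delta)^{-1}\Omega \cdot \partial_t \Omega \, dx$, combined with \eqref{eq 3euler} and the absence of vortex stretching in the helical setting, yields cancellation after integration by parts. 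The assumption $m \geq 2$ is exactly what is needed to make $M_1$ and $E$ finite and to justify the limit procedure via the weighted estimates.

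The main obstacle is the symmetry argument underlying the conservation of $M_1$: because the modified kernel $K(x,y)$ is built by averaging the full three-dimensional Biot-Savart kernel along the helical orbit $a \mapsto (R_a y, a)$, one must verify carefully that this averaging preserves the antisymmetry needed for the cancellation $\int w \, x \cdot H_1 w \, dx = 0$. A secondary technical point is the rigorous justification of both conservation laws at the $L^1_m \cap L^\infty_m$ regularity level, which requires continuity of $M_1(t)$ and $E(t)$ under mollification; for $E$ this ultimately reduces to weighted kernel estimates for $(-\Delta)^{-1}\Omega$ of the type developed in Section 3.
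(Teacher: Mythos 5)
This theorem is not proved in the paper at all: it is quoted from \cite{GZ1}, so there is no internal proof to measure your proposal against. Your outline (reduction to the planar equation, Yudovich-type existence and uniqueness via log-Lipschitz velocity, weighted Gronwall bounds, conservation laws by antisymmetry) is a credible reconstruction of the cited result, and the one piece of the argument that the present paper does reproduce allows a concrete comparison. For the conservation of $M_1$ you propose to work with the reduced two-dimensional kernel $K(x,y)$, and you correctly flag as the main obstacle that the averaging over the helical orbit $a\mapsto (R_a y,a)$ might destroy the antisymmetry needed for $\int w\, x\cdot H_1w\,dx=0$. The paper (Section 4, leading to \eqref{eq sec4 C9}) sidesteps exactly this difficulty: it works directly on $\R^2\times\T$ with the periodic Green's function $G$, tests the transport equation for $\Omega^z$ against radial functions $\phi(|x'|)$, and invokes the identity $\mathfrak{g}(x,y)+\mathfrak{g}(y,x)\equiv 0$ for $\mathfrak{g}(x,y)=(x',0)\cdot\nabla G(x-y)\wedge\xi(y)$, which is immediate from \eqref{eq sec4 C8} because $G$ is even and radial in the horizontal variable, so that $(x_1-y_1)\partial_2G(x-y)-(x_2-y_2)\partial_1G(x-y)=0$. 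Conservation of $M_1$ is then the special case $\phi(r)=r^2$, for which $\dot\phi(r)/r$ is constant and the symmetrized right-hand side of \eqref{eq sec4 C9} vanishes. In short, the three-dimensional formulation is what makes the key cancellation transparent; in your two-dimensional formulation the same cancellation is true but genuinely harder to exhibit, and your plan would go through more smoothly if you lifted the conservation-law computations back to $\R^2\times\T$ as the authors do. The remaining ingredients of your sketch (mollification, propagation of $L^\infty$ and weighted $L^1$ norms, Osgood uniqueness, and the role of $m\ge 2$ in making $M_1$ and $E$ finite) are consistent with what is reported of \cite{GZ1}, though they cannot be checked against this paper.
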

The conservation of the second momentum leads directly to the existence of straight vortex filament.
\begin{theorem}\label{thm sec2 A1}
Let $w_{\e}(x,t)$ satisfy the two-dimensional helical Euler equation \eqref{eq 2euler}. Assuming that the initial vorticity $w_{\e,0}(x)$ is non-negative,
$$
\int_{\R^2} w_{\e,0}(x)\,dx=1
$$
and
$$
\int_{\R^2} |x|^2w_{\e,0}(x)\,dx \lesssim \e^2.
$$
Then for any $\alpha<1 $ and $t\ge0$, there holds
\begin{equation*}
\int_{|x|\ge \e^{\alpha}}w_{\e}(x,t)\,dx \lesssim \e^{2-2\alpha}.
\end{equation*}
In particular, $w_{\e}(\cdot,t)$ converges to the Dirac measure in distribution as $\e \to 0$.
\end{theorem}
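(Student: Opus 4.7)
The plan is to combine the conservation of the second momentum furnished by Theorem \ref{thm main} with the mass-preserving transport structure of \eqref{eq 2euler}, and then to invoke Chebyshev's inequality.

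First I would identify the 2D second moment of $w_\e$ with the conserved 3D quantity $M_1(t)$. Extending $w_\e$ helically by $\Omega^z(x',x_3,t) := w_\e(R_{-x_3}x',t)$ and using $|R_{-x_3}x'| = |x'|$, a change of variables $y = R_{-x_3}x'$ at each fixed $x_3$ yields
$$M_1(t) = \int_{\R^2\times\T}(x_1^2+x_2^2)\Omega^z(x,t)\,dx = 2\pi \int_{\R^2}|y|^2 w_\e(y,t)\,dy,$$
which is consistent with Lemma \ref{le norm of R^2 and R^3} applied with $p=1$, $m=2$. The hypothesis $\int |y|^2 w_{\e,0}\,dy \lesssim \e^2$ combined with $\int w_{\e,0}\,dy = 1$ places $w_{\e,0}$ in $L^1_2(\R^2)$; Theorem \ref{thm main} then gives that $M_1$ is conserved, so
$$\int_{\R^2}|y|^2 w_\e(y,t)\,dy \lesssim \e^2 \quad \text{for all } t \ge 0.$$

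Next, since $\nabla \cdot Hw_\e = 0$ by Proposition \ref{prop 3 euler}, the equation \eqref{eq 2euler} is a transport equation with divergence-free velocity, so it preserves both the total mass $\int w_\e(y,t)\,dy = 1$ and the non-negativity $w_\e(\cdot,t) \ge 0$. Chebyshev's inequality then yields
$$\int_{|y|\ge \e^\alpha} w_\e(y,t)\,dy \le \e^{-2\alpha}\int_{\R^2}|y|^2 w_\e(y,t)\,dy \lesssim \e^{2-2\alpha},$$
which is exactly the claimed concentration bound. For the distributional convergence, fix any $\phi \in C_b(\R^2)$ and split $\int w_\e \phi\,dy$ at $|y| = \e^\alpha$: the outer integral is bounded by $\|\phi\|_\infty \e^{2-2\alpha} \to 0$, while the inner integral equals $\phi(0)\int_{|y|<\e^\alpha}w_\e\,dy + o(1) \to \phi(0)$ by continuity of $\phi$ at $0$ together with $\int_{|y|<\e^\alpha}w_\e\,dy \to 1$.

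I do not foresee a substantive obstacle; the argument reduces to a conservation-plus-Chebyshev computation, and the only mildly nontrivial step is the identification $M_1 = 2\pi \int |y|^2 w_\e\,dy$ via the helical extension, which follows cleanly from rotation invariance of the Euclidean norm.
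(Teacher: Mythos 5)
Your proposal is correct and follows essentially the same route as the paper: invoke the helical correspondence $\Omega^z_\e(x',x_3,t)=w_\e(R_{-x_3}x',t)$ so that Theorem \ref{thm main} gives conservation of the mass and of the second momentum, then conclude by Chebyshev's inequality. You merely make explicit two steps the paper leaves implicit, namely the identification $M_1(t)=2\pi\int_{\R^2}|y|^2w_\e(y,t)\,dy$ and the final distributional-convergence argument, which is a fair elaboration rather than a different method.
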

\begin{proof}
Note that $\Omega_{\e}^z(x',x_3,t)=w_{\e}(R_{x_3}x',t)$. According to Theorem \ref{thm main}, we have
$$
\int_{\R^2} w_{\e}(x,t)\,dx=1
$$
and
$$
\int_{\R^2} |x|^2w_{\e}(x,t)\,dx \lesssim \e^2
$$
for any time $t \ge0$.
Therefore,
$$
\int_{|x|\ge \e^{\alpha}}w_{\e}(x,t)\,dx \lesssim \int_{|x|\ge \e^{\alpha}}\frac{|x|^2}{\e^{2\alpha}}w_{\e}(x,t)\,dx \lesssim \e^{2-2\alpha}.
$$
\end{proof}
As a consequence, we will show the convergence of the helical vortex filament to the straight vortex filament. We consider the initial vorticity $w_{\e,\sigma}(x,0)$ which satisfies
$$
\int_{\R^2} w_{\e,\sigma}(x,0)\,dx=1
$$
and
$$
\int_{\R^2} |x|^2w_{\e,\sigma}(x,0)\,dx \lesssim \e^2+|\sigma|^2.
$$
These conditions can be easily verified if $w_{\e,\sigma}$ has the form
$$
w_{\e,\sigma}(x,0)=\frac{1}{\e^2}\eta\left(\frac{x-\sigma}{\e}\right)
$$
for $\eta$ a non-negative function with sufficient decay at infinity and satisfies $$\int_{\R^2} \eta(x)\,dx =1.$$
\begin{corollary}\label{co sec2 A1}
Let $w_{\e,\sigma}(x,t)$ satisfy the two-dimensional helical Euler equation \eqref{eq 2euler}. Assuming that $w_{\e,\sigma}(x,0)$ satisfies the conditions stated above, then for any $\alpha<1 $ and $t\ge0$, there holds
\begin{equation*}
\int_{|x|\ge \left(\e^2+|\sigma|^2\right)^{\frac{\alpha}{2}}}w_{\e,\sigma}(x,t)\,dx \lesssim \left(\e^2+|\sigma|^2\right)^{1-\alpha}.
\end{equation*}
\end{corollary}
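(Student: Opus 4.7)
The plan is to mimic the proof of Theorem \ref{thm sec2 A1} essentially line by line, with the single modification that the bound on the initial second momentum is now $\e^2+|\sigma|^2$ rather than $\e^2$.

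First, I transfer the two-dimensional initial datum $w_{\e,\sigma}(\cdot,0)$ to the associated three-dimensional helical vorticity $\Omega^z_{\e,\sigma}(x',x_3,0):=w_{\e,\sigma}(R_{-x_3}x',0)$. Using the rotation invariance of $|x'|^2$ together with Lemma \ref{le norm of R^2 and R^3}, one has
\begin{equation*}
\int_{\R^2\times\T}(x_1^2+x_2^2)\Omega^z_{\e,\sigma}(x,0)\,dx = 2\pi\int_{\R^2}|x|^2 w_{\e,\sigma}(x,0)\,dx \lesssim \e^2+|\sigma|^2,
\end{equation*}
and similarly membership in $L^1_m\cap L^\infty_m$ transfers cleanly. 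Applying Theorem \ref{thm main} to $\Omega^z_{\e,\sigma}$ preserves the second-momentum bound at every later time, so descending back to the two-dimensional picture gives $\int_{\R^2}|x|^2 w_{\e,\sigma}(x,t)\,dx \lesssim \e^2+|\sigma|^2$ uniformly in $t\ge 0$. Mass conservation $\int w_{\e,\sigma}(x,t)\,dx=1$ is immediate from $\nabla\cdot Hw_{\e,\sigma}=0$ stated in Proposition \ref{prop 3 euler}.

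Second, since the representative initial data $\e^{-2}\eta((x-\sigma)/\e)$ is non-negative and equation \eqref{eq 2euler} is a transport equation with divergence-free velocity, the non-negativity of $w_{\e,\sigma}(\cdot,t)$ propagates to all $t\ge 0$. On the set $\{|x|\ge (\e^2+|\sigma|^2)^{\alpha/2}\}$ one then has the pointwise bound $w_{\e,\sigma}\le (\e^2+|\sigma|^2)^{-\alpha}|x|^2 w_{\e,\sigma}$, and integrating yields
\begin{equation*}
\int_{|x|\ge (\e^2+|\sigma|^2)^{\alpha/2}} w_{\e,\sigma}(x,t)\,dx \lesssim (\e^2+|\sigma|^2)^{1-\alpha},
\end{equation*}
which is exactly the claim.

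There is no substantive obstacle: the corollary reduces to a Chebyshev estimate once the conservation laws from Theorem \ref{thm main} are combined with the improved initial bound. The only piece of bookkeeping worth being explicit about is the passage between weighted norms on $\R^2$ and on $\R^2\times\T$, which is handled by Lemma \ref{le norm of R^2 and R^3}.
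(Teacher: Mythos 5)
Your proposal is correct and takes essentially the same route as the paper: the paper's proof is simply to invoke Theorem \ref{thm sec2 A1} with $\e$ replaced by $\left(\e^2+|\sigma|^2\right)^{1/2}$, and the proof of that theorem is exactly the conservation-of-second-momentum (via Theorem \ref{thm main}) plus Chebyshev argument you spell out. The only caveat, which your write-up shares with the paper's statement, is that non-negativity of $w_{\e,\sigma}(\cdot,0)$ must be assumed in general (not just for the representative example), since the Chebyshev step requires it.
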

\begin{proof}
This follows directly from Theorem \ref{thm sec2 A1}.
\end{proof}
\section{Biot-Savart law}

\subsection{Stream function}
Recall that the energy
$$
E(t):=\int_{\R^2\times \T} (-\Delta_{\R^2\times \T})^{-1}\Omega \cdot \Omega \,dx
$$
is a conserved quantity, our main task in this section is to show that $E(t)$ behaves like $$\int_{\R^2}\int_{\R^2} \log\frac{1}{|x-y|} w(x)w(y)\,dx\,dy,$$
which is already known for $2D$ Euler equations and axisymmetric Euler equations without swirl. Our main theorem is,
\begin{theorem}\label{thm sec3 A0}
The energy $E(t)$ admits the following decomposition
\begin{equation*}
E(t)=\int_{\R^2}\int_{\R^2}\log \frac{1}{|x-y|}\mathbf{1}_{|x-y|\le 1}\langle x \rangle  w(x)w(y)\,dx\,dy+ O\left(\| w\|_{L^1_2(\R^2)}^2\right).
\end{equation*}
\end{theorem}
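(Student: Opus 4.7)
The strategy is to reduce the $3D$ energy integral to a $2D$ kernel integral via helical symmetry, and then extract the logarithmic singularity of the reduced kernel. Since $\Omega = \xi w$ with $\xi(x)=(x_2,-x_1,1)$ is a helical vector field, the stream function $\psi:=(-\Delta_{\R^2\times\T})^{-1}\Omega$ is also helical, so $\psi(x)\cdot\Omega(x)$ depends only on $R_{-x_3}x'$. Integrating out $x_3$ first, and then, inside the convolution $\psi=G*\Omega$, changing variables $y'\mapsto R_{y_3}y'$ and invoking the identity $\xi(R_{y_3}y',y_3)=R_{y_3}\xi(y',0)$, I obtain the $2D$ representation
\begin{equation*}
E(t)\;=\;2\pi\int_{\R^2}\!\!\int_{\R^2} K(x,y)\,w(x)w(y)\,dx\,dy,\qquad K(x,y)\;:=\;\int_{-\pi}^{\pi} G(x-R_s y,-s)\,[\xi(x)\cdot R_s\xi(y)]\,ds,
\end{equation*}
where $G$ is the Green's function of $-\Delta$ on $\R^2\times\T$. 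The remainder of the proof consists in showing that $K(x,y)=\langle x\rangle\log\frac{1}{|x-y|}\mathbf{1}_{|x-y|\le 1}+B(x,y)$, where the remainder $B$ has polynomial growth controlled by $\langle x\rangle\langle y\rangle$ and hence pairs with $w(x)w(y)$ to give the admissible error $O(\|w\|_{L^1_2}^2)$.

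First, decompose $G(z)=\frac{1}{4\pi|z|}+G_{\mathrm{per}}(z)$, where $G_{\mathrm{per}}$ is obtained by summing the images $z+2\pi k e_3$ with $k\ne 0$, and is smooth and uniformly bounded on $\{|z_3|\le\pi\}$. Since $|\xi(x)\cdot R_s\xi(y)|\lesssim\langle x\rangle\langle y\rangle$, the $G_{\mathrm{per}}$-contribution to $K(x,y)$ is pointwise $\lesssim\langle x\rangle\langle y\rangle$, yielding $O(\|w\|_{L^1_1}^2)\lesssim O(\|w\|_{L^1_2}^2)$. It thus remains to analyse
\begin{equation*}
K_0(x,y)\;:=\;\frac{1}{4\pi}\int_{-\pi}^{\pi}\frac{\xi(x)\cdot R_s\xi(y)}{\sqrt{|x-R_s y|^2+s^2}}\,ds
\end{equation*}
and to extract its leading singular behaviour when $|x-y|\le 1$; the tail $|x-y|>1$ is absorbed into $B$ using the circle-geometry inequality $\min_s|x-R_s y|=||x|-|y||$ together with $|(x-R_s y,-s)|\ge |s|$.

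The heart of the argument is the near-diagonal asymptotic evaluation of $K_0$. Reparametrising the helix $s\mapsto(R_s y,s)$ by its arc length $\sigma=\langle y\rangle s$ (whose total length is $2\pi\langle y\rangle$), and Taylor expanding $|x-R_s y|^2+s^2=|x-y|^2-2s(x-y)\cdot(y_2,-y_1)+\langle y\rangle^2 s^2+O(s^3|y|^2)$, the crucial algebraic identity is that the $s^2$-coefficient $\langle y\rangle^2=1+|y|^2$ equals the squared helical tangent length. After completing the square in $s$ and performing the rescaled $1D$ Coulomb line integral one obtains
\begin{equation*}
\int_{-\pi}^{\pi}\frac{ds}{4\pi\sqrt{|x-R_s y|^2+s^2}}\;=\;\frac{1}{2\pi\langle y\rangle}\log\frac{1}{|x-y|}\;+\;O\!\left(\frac{\log\langle y\rangle}{\langle y\rangle}\right).
\end{equation*}
The prefactor Taylor-expands as $\xi(x)\cdot R_s\xi(y)=\xi(x)\cdot\xi(y)+O(s\langle x\rangle\langle y\rangle)$ with $\xi(x)\cdot\xi(y)=x\cdot y+1=\langle y\rangle^2+(x-y)\cdot y$; the $O(s)$ part integrates to a bounded contribution, while the leading $\xi(x)\cdot\xi(y)$ multiplied by $\frac{1}{\langle y\rangle}\log\frac{1}{|x-y|}$ produces $\langle y\rangle\log\frac{1}{|x-y|}$ plus terms of order $\langle y\rangle\log\langle y\rangle\lesssim\langle y\rangle^2$. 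Finally, $|\langle x\rangle-\langle y\rangle|\le|x-y|$ together with the boundedness of $r\log\frac{1}{r}$ on $(0,1]$ lets me exchange $\langle y\rangle$ for $\langle x\rangle$ at an admissible cost, and multiplying by the outer factor $2\pi$ yields the stated decomposition.

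The main obstacle will be controlling the Taylor-expansion remainders \emph{uniformly in} $\langle y\rangle$, since the expansion $R_s y=y+s(y_2,-y_1)+O(s^2|y|)$ is only valid on the small scale $|s|\lesssim\langle y\rangle^{-1}$. To extend the conclusion to the whole interval $[-\pi,\pi]$ I will split the $s$-integral at the threshold $|s|\sim\langle y\rangle^{-1}$ and estimate the outer part separately via $|x-R_s y|\ge||x|-|y||$. The delicate feature is the exact cancellation between the helical tangent length $\langle y\rangle$ in the denominator and the $\langle y\rangle^2$ factor arising from $\xi(x)\cdot\xi(y)$, which leaves a single net $\langle y\rangle\approx\langle x\rangle$ as the coefficient of the logarithm, exactly matching the form of the statement.
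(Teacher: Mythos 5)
Your proposal follows the same overall architecture as the paper's proof: reduce $E(t)$ to a two-dimensional kernel via helical symmetry (your $2\pi K(x,y)$ is, up to how the prefactor is handled, the paper's $(1+|x|^2)K_1(x,y)$), split the periodic Green's function into $\frac{1}{4\pi|z|}$ plus a remainder of at most linear growth (the paper's Lemma \ref{le sec3 G}), absorb that remainder against the weight $\langle x\rangle^2\langle y\rangle^2$, and extract the logarithmic singularity of the remaining helical line integral. Two slips in your reduction are harmless but should be fixed: the image sum defining $G_{\mathrm{per}}$ converges only after subtracting the constants $\frac{1}{2m\pi}$ (as in Lemma \ref{le sec3 G}), and $G_{\mathrm{per}}$ is \emph{not} uniformly bounded (it grows linearly in $|z|$), so its contribution is $O(\|w\|_{L^1_2}^2)$ rather than $O(\|w\|_{L^1_1}^2)$ --- still admissible.

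The genuine gap is in the key step, the near-diagonal expansion
\begin{equation*}
|x-R_sy|^2+s^2=|x-y|^2-2s\,(x-y)\cdot\mathfrak{R}y+\langle y\rangle^2s^2+O(s^3|y|^2),
\end{equation*}
whose error term, taken at face value, is too weak to close the argument uniformly in $|y|$, even with your proposed splitting at $|s|\sim\langle y\rangle^{-1}$. Write $Q(s)=\langle y\rangle^2(s-s_0)^2+B^2$ with $s_0=\frac{(x-y)\cdot\mathfrak{R}y}{\langle y\rangle^2}$ and $B^2=|x-y|^2-\frac{\left((x-y)\cdot\mathfrak{R}y\right)^2}{\langle y\rangle^2}$. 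When $x-y$ is orthogonal (or nearly orthogonal) to $y$, one has $B^2\approx\frac{|x-y|^2}{\langle y\rangle^2}$ and $|s_0|\approx\frac{|x-y|}{\langle y\rangle}$, so an error of size $|s_0|^3|y|^2$ compares to $Q(s_0)=B^2$ like $|x-y|\,|y|$, which is unbounded as $|y|\to\infty$; you then cannot conclude $(Q+E)^{-1/2}\approx Q^{-1/2}$ near the minimum of $Q$, and what is corrupted is precisely the coefficient of $\log\frac{1}{|x-y|}$ --- the entire content of the theorem, since after multiplication by the prefactor $\approx\langle y\rangle^2$ such a defect is not absorbable into $O(\langle x\rangle^2\langle y\rangle^2)$. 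The gap is reparable, but only by keeping the exact trigonometric coefficients: the true discrepancy is
\begin{equation*}
E(s)=\bigl[2(1-\cos s)-s^2\bigr](x\cdot y)+s^2\,(x-y)\cdot y+2(s-\sin s)\,(x-y)\cdot\mathfrak{R}y,
\end{equation*}
in which the cubic-in-$s$ piece carries the factor $(x-y)\cdot\mathfrak{R}y=\langle y\rangle^2s_0$ rather than $|y|^2$, the $|y|^2$-type error is quartic in $s$, and the $s^2$-error carries only the radial component $(x-y)\cdot y$; using these (plus $B^2\ge 2|(x-y)\cdot\tfrac{y}{|y|}|\,|(x-y)\cdot\tfrac{\mathfrak{R}y}{|y|}|/\langle y\rangle$) one checks $|E(s)|\lesssim\left(|x-y|+c\right)Q(s)$ on $|s|\le c\langle y\rangle^{-1}$, and your scheme then closes. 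This bookkeeping is exactly what the paper's proof of Lemma \ref{le sec3 A1} sidesteps: it never expands the quadratic form in $s$, but replaces $s^2$ by $2-2\cos s$ (a multiplicative-error step) and uses the identity $|x-R_ay|^2+2-2\cos a=|x|^2+|y|^2+2-2\mathfrak{g}(x,y)\cos\left(a+\phi(x,y)-\frac{\pi}{2}\right)$, valid on all of $[-\pi,\pi]$, so no splitting scale and no uniformity-in-$|y|$ issue ever arises. Your remaining steps --- trading $\frac{1+x\cdot y}{\langle y\rangle}$ for $\langle x\rangle$ via the boundedness of $r\log\frac{1}{r}$ on $(0,1]$, and the far-field bound via $\min_s|x-R_sy|=\big||x|-|y|\big|$ and $|s|$ --- parallel the paper's and are fine.
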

To prove this theorem, we start by considering the Green's function on $\R^2 \times \T$,
\begin{proposition}[{\cite{BLN}, \cite{GZ1}}]\label{prop stream function}
The Green's function for the Laplacian in $\R^3$ with $2\pi-$periodic boundary condition in the $x_3-$direction is given by
\begin{equation}\label{eq sec2.111 A1}
G(x)=\frac{1}{4\pi^2}\log\frac{1}{|\tilde{x}|}+\frac{1}{2\pi^2}\sum_{n=1}^{\infty}K_0(n|\tilde{x}|)\cos(nx_3)
\end{equation}
for all $x \in \R^2\times \T$ with $\tilde{x}\neq 0$, where
\begin{equation*}
K_0(z):=\int_0^{\infty}\frac{\cos(t)}{\sqrt{z^2+t^2}}\,dt.
\end{equation*}
More precisely, the stream function $$\psi(x):=\int_{\R^2\times \T} G(x-y)f(y)\,dy$$ is a periodic function in $x_3$ and satisfies $$-\Delta \psi=f.$$
\end{proposition}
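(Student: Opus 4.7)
The plan is to compute $G$ via Fourier series in the periodic variable $x_3$. Since $G$ should be $2\pi$-periodic in $x_3$ and satisfy $-\Delta G = \delta_0$ on $\R^2 \times \T$, I would write formally
\begin{equation*}
G(\tilde x, x_3) = \sum_{n \in \mathbb{Z}} g_n(\tilde x)\, e^{i n x_3},
\end{equation*}
and use the representation of the periodic delta, $\delta_0(\tilde x,x_3) = \delta(\tilde x)\cdot\frac{1}{2\pi}\sum_{n\in\mathbb{Z}}e^{inx_3}$. Matching Fourier coefficients in $-\Delta G = \delta_0$ reduces the problem to a family of planar equations
\begin{equation*}
(-\Delta_{\tilde x} + n^2)\, g_n(\tilde x) = \frac{1}{2\pi}\delta(\tilde x), \qquad n \in \mathbb{Z}.
\end{equation*}

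For $n=0$ this is the 2D Poisson equation, whose radial fundamental solution is $\frac{1}{2\pi}\log\frac{1}{|\tilde x|}$, giving $g_0(\tilde x) = \frac{1}{4\pi^2}\log\frac{1}{|\tilde x|}$. For $n \neq 0$ the operator $-\Delta_{\tilde x} + n^2$ is of Helmholtz type and its radial fundamental solution on $\R^2$ is $\frac{1}{2\pi}K_0(|n||\tilde x|)$, where $K_0$ is the modified Bessel function of the second kind of order zero. This identification is the key classical input: one checks that the integral $K_0(z)=\int_0^\infty \cos t/\sqrt{z^2+t^2}\,dt$ defines a radial solution of $(-\partial_r^2 - r^{-1}\partial_r + 1)K_0 = 0$ for $r>0$, has the correct logarithmic singularity $K_0(r) \sim \log(1/r)$ as $r\to 0^+$, and decays exponentially at infinity. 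Pairing the coefficients $g_n$ and $g_{-n}$ via $e^{inx_3}+e^{-inx_3}=2\cos(nx_3)$ then yields the claimed formula \eqref{eq sec2.111 A1}.

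Next I would justify the formal manipulation and convergence. The classical asymptotic $K_0(z) \sim \sqrt{\pi/(2z)}\,e^{-z}$ as $z\to\infty$ ensures that $\sum_{n\geq 1}K_0(n|\tilde x|)\cos(nx_3)$ converges absolutely and uniformly on every set $\{|\tilde x|\geq \delta\}$, together with all its derivatives. Hence termwise differentiation is legitimate off the singular fiber $\{\tilde x=0\}$, and the identity $-\Delta G = 0$ holds there. Combined with the logarithmic singularity at $\tilde x = 0$ of the $n=0$ term, a standard Gauss-divergence computation on thin tubular neighborhoods of the fiber shows $-\Delta G = \delta_0$ in $\mathcal D'(\R^2\times\T)$.

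For the stream-function claim, once $-\Delta G = \delta_0$ is established, periodicity of $\psi$ in $x_3$ is inherited directly from periodicity of $G$, and $-\Delta\psi = (-\Delta G)*f = f$ follows by standard distributional convolution (the convolution is well-defined because $f \in L^1(\R^2\times\T)$ and the kernel is locally integrable with exponential decay of the tail coming from the $K_0$ series). The main obstacle in carrying this out is the clean identification of the radial fundamental solution of $-\Delta+n^2$ on $\R^2$ with $\frac{1}{2\pi}K_0(|n||\tilde x|)$ via the specific integral representation stated in the proposition, and the verification that the formal Fourier matching is valid in the distributional sense across the singular fiber $\tilde x = 0$.
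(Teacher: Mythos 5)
The paper offers no internal proof to compare against: Proposition \ref{prop stream function} is quoted from \cite{BLN} and \cite{GZ1}. Your Fourier-mode derivation is the standard route and is essentially sound: matching coefficients reduces the problem to $(-\Delta_{\tilde x}+n^2)g_n=\frac{1}{2\pi}\delta$, the planar fundamental solutions $\frac{1}{2\pi}\log\frac{1}{|\tilde x|}$ and $\frac{1}{2\pi}K_0(|n|\,|\tilde x|)$ give exactly the coefficients $\frac{1}{4\pi^2}$ and, after pairing the modes $\pm n$, $\frac{1}{2\pi^2}$, and the integral representation in the statement is the classical one for the modified Bessel function (substitute $t\mapsto t/z$ in $K_0(z)=\int_0^\infty \cos(zt)/\sqrt{1+t^2}\,dt$).

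Two points need repair. First, your picture of the singular set is wrong: the delta sits at the single point $\tilde x=0$, $x_3=0$ of $\R^2\times\T$, not along the fiber $\{\tilde x=0\}$, so a ``Gauss--divergence computation on thin tubular neighborhoods of the fiber'' is the wrong geometry. Indeed, for $x_3\neq 0$ the logarithmic singularities of the individual modes cancel (the coefficient of $\log\frac{1}{|\tilde x|}$ is $\frac{1}{4\pi^2}+\frac{1}{2\pi^2}\sum_{n\ge 1}\cos(nx_3)$, which Abel-sums to zero), and the true singularity of $G$ is the three-dimensional one $\frac{1}{4\pi|x|}$ --- this is precisely the content of the paper's Lemma \ref{le sec3 G}. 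The cleanest fix avoids the geometry entirely: justify the matching mode by mode, pairing against a test function and interchanging the sum over $n$ with the integration, which is legitimate since $\|K_0(n|\cdot|)\|_{L^1(B_R)}\lesssim n^{-2}$ while the Fourier coefficients in $x_3$ of a smooth test function decay rapidly. Second, your justification of the convolution is too quick: $G$ does \emph{not} have decaying tails, since the $n=0$ term grows like $\log|\tilde x|$, so $\psi=G*f$ is not defined for arbitrary $f\in L^1(\R^2\times\T)$; one needs the weighted integrability the paper actually assumes (e.g.\ $f\in L^1_1$) to make the convolution finite, after which $-\Delta\psi=f$ and the periodicity in $x_3$ follow as you say. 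With these two repairs your argument is complete and is, as far as one can tell, the same classical derivation underlying the cited references.
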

Next, we analyze the behavior of $G$ near $|x'|=0$.

\begin{lemma}\label{le sec3 G}
For any $x\in \R^2 \times \T$ with $x'\neq 0$, the Green's function admits the following decomposition (here $\gamma$ denote the Euler constant)
\begin{equation}\label{eq Green's function behavior}
G(x)=\frac{1}{4\pi|x|}+\frac{\gamma-\log (4\pi)}{4\pi^2}+\mathcal{A}_G(x),
\end{equation}
where the remainder term $\mathcal{A}_G$ is relatively small near $|x'|=0$ in the sense that
\begin{equation*}
|\mathcal{A}_G(x)| \lesssim |x|
\end{equation*}
and
\begin{equation*}
|\nabla^n\mathcal{A}_G(x)| \lesssim 1
\end{equation*}
for any $n \in \mathbb{Z}^+$.
\end{lemma}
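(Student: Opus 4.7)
The plan is to observe that $G$ and the standard $3D$ Newton potential $\tfrac{1}{4\pi|x|}$ are both fundamental solutions of $-\Delta$ (on $\R^2 \times \T$ and $\R^3$, respectively), so their difference is harmonic in a punctured neighborhood of the origin; the whole estimate then reduces to identifying the additive constant and invoking standard elliptic regularity. Concretely, I set $\Psi(x) := G(x) - \tfrac{1}{4\pi|x|}$. By Proposition \ref{prop stream function}, $G$ satisfies $-\Delta G = \delta_0$ distributionally on $\R^2 \times \T$; lifted to the fundamental ball $B_\pi(0) \subset \R^3$, which contains only one copy of the periodized delta, this still reads $-\Delta G = \delta_0$. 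Since $-\Delta(\tfrac{1}{4\pi|x|}) = \delta_0$ on all of $\R^3$, the singularities cancel and $\Psi$ is harmonic on $B_\pi(0) \setminus \{0\}$.

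The heart of the proof is to show that $\Psi$ is bounded near $0$ and to compute $\lim_{x \to 0} \Psi(x) = \tfrac{\g - \log(4\pi)}{4\pi^2}$. For this I would use the explicit representation \eqref{eq sec2.111 A1} together with the classical asymptotics $K_0(z) = \log(2/z) - \g + O(z^2 \log z)$ as $z \to 0^+$ and $K_0(z) \lesssim z^{-1/2} e^{-z}$ as $z \to \infty$. Splitting the sum $\sum_{n \ge 1} K_0(n|x'|) \cos(nx_3)$ at $n \sim 1/|x'|$, the tail contributes $O(|x|)$ by exponential decay, while the head must be handled by a Poisson/Ewald summation of the periodized Newton potential in the $x_3$-direction. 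The classical identity, schematically
\[
\sum_{n \ge 1} K_0(n|x'|) \cos(nx_3) = \frac{\pi}{2|x|} + \tfrac{1}{2}\log|x'| + c_0 + O(|x|),
\]
disentangles the $1/|x|$ pole, the $\log|x'|$ piece (which exactly cancels the $\tfrac{1}{4\pi^2}\log(1/|x'|)$ prefactor in $G$), and an explicit constant $c_0$ expressible through $\g$ and $\log(4\pi)$. Substituting back into the formula for $G$ produces both the leading Newtonian singularity $\tfrac{1}{4\pi|x|}$ and the additive constant $\tfrac{\g - \log(4\pi)}{4\pi^2}$.

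With the removable-singularity bound in hand, Riemann's theorem extends $\Psi$ to a harmonic function on $B_\pi(0)$, so $\mathcal{A}_G := \Psi - \tfrac{\g - \log(4\pi)}{4\pi^2}$ is harmonic with $\mathcal{A}_G(0) = 0$. A first-order Taylor expansion around the origin gives $|\mathcal{A}_G(x)| \lesssim |x|$ on a neighborhood, and standard interior estimates for harmonic functions yield $|\nabla^n \mathcal{A}_G(x)| \lesssim 1$ on a slightly smaller ball for every $n \ge 1$. Away from the origin within the fundamental domain the series in \eqref{eq sec2.111 A1} is smooth term-by-term with exponential tails in $n$, so the required estimates extend directly.

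The main obstacle is the precise identification of the additive constant in the second step. The series $\sum_n K_0(n|x'|) \cos(nx_3)$ simultaneously carries a $1/|x|$ pole, a $\log|x'|$ contribution, and a nontrivial constant, and disentangling these without losing a numerical factor requires either a Poisson summation applied to the regularized periodization of the Newton potential in $x_3$, or a direct small-argument expansion of $K_0$ combined with a finite-part treatment of the divergent series $\sum_n \log n \cdot \cos(nx_3)$. Either route is delicate but classical, and pinning down the exact numerical value $\tfrac{\g - \log(4\pi)}{4\pi^2}$ is the only nontrivial computation in the lemma.
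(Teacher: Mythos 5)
Your route shares its one essential ingredient with the paper's proof: the classical summation identity that you write schematically is exactly what the paper uses, except that the paper does not re-derive it — it quotes from \cite{BLN}, \cite{GZ1} the \emph{exact} form
\begin{equation*}
\sum_{n\ge1}K_0(n|x'|)\cos(nx_3)=\frac{\pi}{2|x|}+\frac12\Bigl(\gamma-\log(4\pi)+\log|x'|\Bigr)
+\frac{\pi}{2}\sum_{\pm}\sum_{m\ge1}\Bigl[\frac{1}{\sqrt{|x'|^2+(x_3\pm2m\pi)^2}}-\frac{1}{2m\pi}\Bigr],
\end{equation*}
so that $\mathcal{A}_G$ \emph{is} (up to the prefactor coming from \eqref{eq sec2.111 A1}) the renormalized image-charge sum, and both stated bounds follow by differentiating this absolutely convergent series term by term: each summand is $O(|x|/m^2)$, and its $n$-th derivatives are $O(1/m^{n+1})$, uniformly for $x_3\in[-\pi,\pi]$ and \emph{all} $x'$. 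So the computation you flag as "the only nontrivial" one can simply be cited, and what your softer machinery (removable singularity plus interior elliptic estimates) buys is avoiding the series estimates near the origin. What it loses is global control, and that is where your argument has a genuine gap.

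The gap: the lemma's bounds are needed, and proved in the paper, for all $x\in\R^2\times\T$ with $x'\neq0$ — they are invoked later in global form, e.g.\ to bound $\mathcal{A}_{E,1}$ by $\|\Omega^z\|_{L^1_2(\R^2\times\T)}^2$ and to obtain \eqref{eq sec4.3 A0}. Your argument covers a neighborhood of the origin (harmonic extension) and the region where $|x'|$ is bounded below (exponential decay of $K_0$), but not the tube where $|x'|$ is small and $|x_3|$ is bounded away from $0$, say near $(0,\pi)$. There your claim that the series "is smooth term-by-term with exponential tails in $n$" fails: as $|x'|\to0$ each term $K_0(n|x'|)\cos(nx_3)$ blows up like $\log\frac{1}{n|x'|}$, the series converges only conditionally through the oscillation of $\cos(nx_3)$, and the explicit $\frac{1}{4\pi^2}\log\frac{1}{|x'|}$ in \eqref{eq sec2.111 A1} diverges and must cancel against the sum — none of which is captured term by term. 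Nor does your harmonic-extension argument reach this region: interior estimates degenerate at $\partial B_\pi(0)$, and $\Psi=G-\frac{1}{4\pi|x|}$ is not even harmonic across $x_3=\pm\pi$, since the fundamental-domain norm $|x|$ has a corner there. (A secondary inaccuracy in the same spirit: in your Step 2 the tail $\sum_{n\gtrsim1/|x'|}K_0(n|x'|)\cos(nx_3)$ is \emph{not} $O(|x|)$; it is of size $1/|x'|$ — the pole $\frac{\pi}{2|x|}$ is generated jointly by the head and the tail, so the split does no isolating work and the identity must be used as a whole.) The gap is fixable: either use the exact identity above globally, whose remainder converges absolutely with all derivatives uniformly in the problematic tube — this is precisely the paper's proof — or supplement your argument by noting that $G$ is smooth away from its pole and applying compactness together with interior estimates on the manifold $\R^2\times\T$. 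As written, however, "the required estimates extend directly" is false exactly where the lemma is delicate.
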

\begin{proof}
As shown in \cite{BLN} and \cite{GZ1}, the series in \eqref{eq sec2.111 A1} can be rewritten as
\begin{equation*}\begin{aligned}
\sum_{n=1}^{\infty}K_{0}(n|x'|)\cos(nx_3)&=\frac{1}{2}\left(\gamma+\log\left(\frac{1}{4\pi}\right)+\log(|x'|)  \right)+\frac{\pi}{2|x|}\\
+&\frac{\pi}{2}\sum_{m=1}^{\infty}\left[ \frac{1}{\sqrt{|x'|^2+(x_3-2m\pi)^2}}-\frac{1}{2m\pi} \right] \\
+&\frac{\pi}{2}\sum_{m=1}^{\infty}\left[ \frac{1}{\sqrt{|x'|^2+(x_3+2m\pi)^2}}-\frac{1}{2m\pi} \right].
\end{aligned}\end{equation*}
Therefore, \eqref{eq Green's function behavior} follows with
\begin{equation*}\begin{aligned}
\mathcal{A}_G(x)&=\frac{\pi}{2}\sum_{m=1}^{\infty}\left[ \frac{1}{\sqrt{|x'|^2+(x_3-2m\pi)^2}}-\frac{1}{2m\pi} \right] \\
+&\frac{\pi}{2}\sum_{m=1}^{\infty}\left[ \frac{1}{\sqrt{|x'|^2+(x_3+2m\pi)^2}}-\frac{1}{2m\pi} \right].
\end{aligned}\end{equation*}
Recall that $x_3 \in [-\pi,\pi]$, which implies that $|x_3 \pm 2m\pi|\approx 2m\pi$ and $|x_3| \lesssim 2m\pi$. A direct calculation yield
$$
|\mathcal{A}_G(x)| \lesssim \sum_{m=1}^{\infty} \frac{|x|}{m^2}\lesssim |x|
$$
and
$$
|\nabla^n\mathcal{A}_G(x)| \lesssim \sum_{m=1}^{\infty} \frac{1}{m^{n+1}} \lesssim 1.
$$
\end{proof}
As a consequence, we see that (set $\xi(x)=(x_2,-x_1,1)$)
\begin{equation}
E(t)=\int_{\R^2\times \T}\int_{\R^2\times \T} \frac{\xi(x)\cdot \xi(y)}{4\pi|x-y|} \Omega^z(x)\Omega^z(y)\,dx\,dy+\mathcal{A}_{E,1},
\end{equation}
where
$$
|\mathcal{A}_{E,1}| \lesssim \| \Omega^z\|_{L^1_2(\R^2\times \T)}^2.
$$
Note that $|\xi(x)-\xi(y)|\le |x-y|$ and $\Omega^z(x)=w(R_{-x_3}(x_1,x_2))$, we finally get
\begin{equation}\label{eq sec3 D2}\begin{aligned}
E(t)&=\int_{\R^2\times \T}\int_{\R^2\times \T} \frac{1+x_1^2+x_2^2}{4\pi|x-y|} \Omega^z(x)\Omega^z(y)\,dx\,dy+\mathcal{A}_{E,2} \\
&=\int_{\R^2}\int_{\R^2}  K_1(x,y)(1+x_1^2+x_2^2)w(x)w(y)\,dx\,dy+\mathcal{A}_{E,2},
\end{aligned}\end{equation}
where
\begin{equation*}
\mathcal{A}_{E,2}=\int_{\R^2\times \T}\int_{\R^2\times \T} \frac{\xi(x)\cdot \left(\xi(y)-\xi(x)\right)}{4\pi|x-y|} \Omega^z(x)\Omega^z(y)\,dx\,dy+\mathcal{A}_{E,1},
\end{equation*}
is also bounded by $\| \Omega^z\|_{L^1_2(\R^2\times \T)}^2$ and
$$K_1(x,y)=\frac{1}{4\pi}\int_{-\pi}^{\pi}\int_{-\pi}^{\pi}  \frac{1}{\left( |R_{a}x-R_by|^2+|a-b|^2 \right)^{1/2}} \,da\,db.$$
To estimate $K_1(x,y)$, we proceed as follows:
\begin{equation*}\begin{aligned}
K_1(x,y)&=\frac{1}{2}\int_{-\pi}^{\pi}\int_{-\pi}^{\pi}  \frac{1}{\left( |x-R_{b-a}y|^2+|b-a|^2 \right)^{1/2}} \,db\,da \\
&=\frac{1}{4\pi}\int_{-\pi}^{\pi}\int_{-\pi-a}^{\pi-a}  \frac{1}{\left( |x-R_{b}y|^2+|b|^2 \right)^{1/2}} \,db\,da \\
&=\frac{1}{4\pi}\int_{-2\pi}^{2\pi}\int \mathbf{1}_{-\pi \le a+b \le \pi, \, -\pi \le a \le \pi} \,da \frac{1}{\left( |x-R_{b}y|^2+|b|^2 \right)^{1/2}} \,db.
\end{aligned}\end{equation*}
Observing that
$$
\int \mathbf{1}_{-\pi \le a+b \le \pi, \, -\pi \le a \le \pi} \,da=2\pi-|b|,
$$
we conclude that
\begin{equation*}\begin{aligned}
K_1(x,y)&=\frac{1}{4\pi}\int_{-2\pi}^{2\pi} \frac{2\pi-|b|}{\left( |x-R_{b}y|^2+|b|^2 \right)^{1/2}} \,db \\
&=\frac{1}{2}\int_{-\pi}^{\pi} \frac{1}{\left( |x-R_{a}y|^2+|a|^2 \right)^{1/2}} \,da+O(1).
\end{aligned}\end{equation*}

\begin{lemma}\label{le sec3 A1}There exists a constant $\delta_1$  independent of $x,y$ such that
\begin{equation}\label{eq sec3 D0}
K_1(x,y)=\frac{1}{\sqrt{1+|x|^2}}\log\frac{1}{|x-y|}+O(1)
\end{equation}
when $|x-y|\le \delta_1$ and
\begin{equation}\label{eq sec3 D1}
K_1(x,y) \lesssim 1
\end{equation}
when $|x-y|\ge \delta_1$.
\end{lemma}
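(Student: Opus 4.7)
The plan is to analyze the reduced integral
$$I(x,y):=\int_{-\pi}^{\pi}\frac{da}{\sqrt{|x-R_a y|^2+a^2}},$$
since the excerpt has already shown $K_1(x,y)=\tfrac{1}{2}I(x,y)+O(1)$. The main tool is a second-order Taylor expansion of $|x-R_a y|^2$ at $a=0$. Writing $R_a=\cos(a)\,\Id+\sin(a)\,\mathfrak{R}$ and using $y\cdot\mathfrak{R}y=0$, one gets
$$|x-R_a y|^2+a^2 = Aa^2+2Ba+C+r(a),$$
with $A:=1+x\cdot y$, $B:=(x-y)\cdot y^{\perp}$, $C:=|x-y|^2$, and cubic error $|r(a)|\le c|a|^3|x||y|$ (from $|h'''(a)|\le 4|x||y|$ applied in the Taylor remainder). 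Completing the square yields $Q(a):=Aa^2+2Ba+C=A(a+B/A)^2+D$ with $D:=C-B^2/A$; the Cauchy--Schwarz bound $B^2\le|y|^2|x-y|^2$ gives the useful lower estimate $D\ge|x-y|^2/A$, while trivially $D\le|x-y|^2$.

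I would then split $[-\pi,\pi]=\{|a|\le\delta\}\cup\{|a|>\delta\}$ with $\delta:=\min(\tfrac{1}{2},1/(10\langle y\rangle))$, chosen so that $|r(a)|\le\tfrac{1}{100}Aa^2$ on the inner interval and simultaneously $|B/A|\le\delta/2$ (the latter holding once $\delta_1$ is small since $|B/A|\lesssim |y||x-y|/(1+|y|^2)$). On the inner interval the substitution $s=a+B/A$ reduces the integral to $\int(As^2+D)^{-1/2}\,ds$, whose antiderivative $A^{-1/2}\sinh^{-1}(s\sqrt{A/D})$, combined with the asymptotic $\sinh^{-1}(t)=\log(2t)+O(t^{-2})$ at both endpoints (valid since $\delta\sqrt{A/D}\gg 1$), produces the inner contribution $\frac{1}{\sqrt A}\log(A/D)+O(A^{-1/2})$. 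On the outer interval the sharper estimate $|x-R_a y|\gtrsim |a||y|$, valid as soon as $|a|\gtrsim|x-y|/|y|$ (the rotation separation beats $|x-y|$), shows the outer integrand is $\lesssim 1/(|a|\langle y\rangle)$ on most of the range, giving an outer contribution $\lesssim\langle y\rangle^{-1}\log(1/|x-y|)+O(1)$.

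For $|x-y|\le\delta_1$, the identity $x\cdot y=|x|^2+O(|x|\,|x-y|)$ gives $A\asymp\langle x\rangle^2\asymp\langle y\rangle^2$, and $\log(A/D)=2\log(1/|x-y|)+O(\log\langle x\rangle)$. The elementary facts $\log\langle x\rangle/\sqrt A\lesssim 1$ and
$$\Bigl|\tfrac{1}{\sqrt A}-\tfrac{1}{\sqrt{1+|x|^2}}\Bigr|\log\tfrac{1}{|x-y|}\lesssim|x-y|\log\tfrac{1}{|x-y|}\lesssim 1$$
then let me replace $\sqrt A$ by $\sqrt{1+|x|^2}$ at the cost of $O(1)$, which together with the $1/2$ prefactor in $K_1$ yields \eqref{eq sec3 D0}. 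For $|x-y|\ge\delta_1$ the floor $C\ge\delta_1^2$ keeps the inner integrand uniformly bounded, so the inner contribution is $O(A^{-1/2})=O(1)$, and the refined outer estimate directly gives $O(1)$, yielding \eqref{eq sec3 D1}.

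The delicate point is the uniformity as $|y|\to\infty$: the cubic remainder $|a|^3|x||y|$ forces the inner interval to shrink like $1/\langle y\rangle$, so a naive bound $\int_\delta^\pi da/|a|\sim\log\langle y\rangle$ for the outer contribution is too lossy. The entire argument hinges on upgrading this to the sharp bound $1/(|a|\langle y\rangle)$ using the geometric fact $|R_a y-y|\gtrsim|a||y|$, so that the outer contribution becomes compatible with (rather than dominant over) the inner main term $\frac{\log(1/|x-y|)}{\sqrt{1+|x|^2}}$. The hypothesis $|x-y|\le\delta_1$ is used throughout to keep $\langle x\rangle\asymp\langle y\rangle$, which is what ultimately allows one to identify the prefactor as $1/\sqrt{1+|x|^2}$ rather than $1/\sqrt{A}$.
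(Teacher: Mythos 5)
Your route is genuinely different from the paper's: you Taylor-expand the rotation to get a quadratic $Q(a)=Aa^2+2Ba+C$ plus a cubic remainder and integrate $(As^2+D)^{-1/2}$ explicitly, whereas the paper never expands $R_ay$ at all — it uses the exact identity $|x-R_ay|^2+2-2\cos a=|x|^2+|y|^2+2-2\mathfrak{g}(x,y)\cos\left(a+\phi(x,y)-\tfrac{\pi}{2}\right)$, so that every approximation error is a function of $a$ alone and is dominated pointwise by the terms retained. Your target formula and the identification of the prefactor are correct, but as written the argument has two genuine gaps, both located exactly in the large-$|y|$ regime you flag as delicate. The main one is the reduction of the inner integral to $\int(As^2+D)^{-1/2}\,ds$: this does \emph{not} follow from $|r(a)|\le\tfrac{1}{100}Aa^2$, because $Aa^2$ is not controlled by $Q(a)$ near the vertex $a^*=-B/A$. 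There $Q(a^*)=D$, which can be as small as $C/A$ (exactly $C/A$ when $x-y\perp y$), while $\tfrac{1}{100}Aa^{*2}=\tfrac{B^2}{100A}$ can be of size $\sim C$; the allowed error thus exceeds the quadratic by a factor $\sim|y|^2$. Knowing only $f=Q+r\ge0$ and $|r|\le\tfrac{1}{100}Aa^2$, the integral of $f^{-1/2}$ near $a^*$ is simply not determined — these constraints even permit $f$ to vanish on an interval once $B^2>\tfrac{99}{100}AC$, which happens for $x-y\perp y$, $|y|\ge10$. The fix requires an extra idea: either track the structure of the remainder (the genuinely cubic term of $|x-R_ay|^2+a^2$ is $-\tfrac{B}{3}a^3$ with coefficient $B$, not $O(|x||y|)$; the $O(|x||y|)$-sized contributions enter only at order $a^4$), or give a separate vertex argument (e.g.\ the elementary lower bounds $f\ge a^2$ and $f\ge(\sqrt{C}-|a||y|)_+^2$ show that both $\int f^{-1/2}$ and $\int Q^{-1/2}$ over $\{|s|\le2|a^*|\}$ are $O(1)$, so they can be exchanged there).

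The second gap is logical: your outer bound $\lesssim\langle y\rangle^{-1}\log(1/|x-y|)+O(1)$ is of exactly the same order as the main term $\langle x\rangle^{-1}\log(1/|x-y|)$ (since $\langle x\rangle\asymp\langle y\rangle$ here), so an error "compatible with the main term" does not prove \eqref{eq sec3 D0} — it would corrupt the constant by a bounded but nonzero factor. The error must be $O(1)$, and in fact your own geometric estimate yields this: because $|x-y|\le\delta_1$ while the outer region starts at $|a|\ge\delta\sim1/\langle y\rangle$, and $\delta\gg|x-y|/|y|$ once $\delta_1$ is small, the bound $1/(|a|\langle y\rangle)$ holds on the \emph{entire} outer region, giving a contribution $\lesssim\langle y\rangle^{-1}\log(\pi/\delta)\lesssim\langle y\rangle^{-1}\log\langle y\rangle=O(1)$. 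You should state that stronger bound, not the lossy one. A further small slip: $D\ge C/A$ is false in general (take $x-y$ with a small negative component along $y$ and the rest along $y^{\perp}$; then $C(A-1)<B^2$); only $D\gtrsim C/A$ holds, which is all you need for $\log(A/D)=2\log(1/|x-y|)+O(\log\langle x\rangle)$. With these repairs your proof would go through, but they are precisely the points where the paper's exact trigonometric rewriting quietly does the work.
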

\begin{proof}
We fixed $\delta_0$ a small constant independent of $x,y$, the choice of $\delta_0$ will be specified later. Inequality \eqref{eq sec3 D1} follows directly from the estimates
\begin{equation}\label{eq sec3 E0}
\int_0^{\infty} \frac{1}{|x-R_a(y)|^2+a^2} \,da \lesssim 1+\frac{1}{|x-y|},
\end{equation}
which has been proved in \cite{GZ1}. So it remains to estimate
$$
I(x,y)=\int_{-\pi}^{\pi} \frac{1}{\left( |x-R_{a}y|^2+|a|^2 \right)^{1/2}} \,da
$$
in the region $|x-y|\le \delta_1$. It is worth noting that $I(x,y)=I(y,x)$, so it suffices to consider the region $$\left\{ (x,y)\in \R^2\times \R^2 \bigg| x^{\perp}\cdot y \ge 0 \right\}.$$

By choosing $\delta_0$ small enough such that $$|a^2-2+2\cos a|\le 1-\cos a$$ for all $|a|\le \delta_0$, we obtain
\begin{equation*}\begin{aligned}
I(x,y)&=\int_{-\delta_0}^{\delta_0} \frac{1}{\left( |x-R_{a}y|^2+2-2\cos a+O(a^4) \right)^{1/2}} \,da+O(1)\\
&=\int_{-\delta_0}^{\delta_0} \frac{1}{\left( |x-R_{a}y|^2+2-2\cos a \right)^{1/2}}\,da+\int_{-\delta_0}^{\delta_0} \frac{O(a^2)}{\left( |x-R_{a}y|^2+2-2\cos a \right)^{1/2}} \,da+O(1) \\
&=\int_{-\delta_0}^{\delta_0} \frac{1}{\left( |x-R_{a}y|^2+2-2\cos a \right)^{1/2}}\,da +O(1) \\
&=\int_{-\pi}^{\pi} \frac{1}{\left( |x-R_{a}y|^2+2-2\cos a \right)^{1/2}}\,da +O(1).
\end{aligned}\end{equation*}
Next, a direct calculation shows
$$
|x-R_ay|^2+2-2\cos a=|x|^2+|y|^2+2-2\left( (1+x\cdot y) \cos a+x^{\perp}\cdot y \sin a \right).
$$
Now, by choosing $\delta_1$ small enough such that $1+x\cdot y \ge \frac12$ whenever $|x-y|\le \delta_1$, there holds
$$
|x-R_ay|^2+2-2\cos a=|x|^2+|y|^2+2-2\mathfrak{g}(x,y) \cos\left(a+\phi(x,y)-\frac{\pi}{2}\right),
$$
where
$$
\mathfrak{g}(x,y):= \sqrt{(1+x\cdot y)^2+(x^{\perp}\cdot y)^2}
$$
and
$$
\phi(x,y)=\arctan \frac{1+x\cdot y}{x^{\perp}\cdot y}.
$$
Thus, $0 \le \phi(x,y) \le \frac{\pi}{2}$ since $1+x\cdot y \ge \frac12$ and $x^{\perp} \cdot y \ge 0$. Consequently,
\begin{equation*}\begin{aligned}
I(x,y)&=\int_{-\pi}^{\pi} \frac{1}{\left( |x|^2+|y|^2+2-2\mathfrak{g}(x,y) \cos\left(a+\phi(x,y)-\frac{\pi}{2}\right) \right)^{1/2}}\,da +O(1) \\
&=\frac{1}{\sqrt{2\mathfrak{g}(x,y)}}\int_{-\frac{3\pi}{2}+\phi(x,y)}^{\frac{\pi}{2}+\phi(x,y)} \frac{1}{\sqrt{\frac{\mathfrak{g}_1^2(x,y)}{2}+1-\cos a}}\,da+O(1),
\end{aligned}\end{equation*}
where
$$
\frac{\mathfrak{g}_1^2(x,y)}{2}:=\frac{4|x-y|^2+(|x|^2-|y|^2)^2}{2\mathfrak{g}(x,y)(|x|^2+|y|^2+2+2\mathfrak{g}(x,y))} \ge0.
$$
Note that $0\le \phi(x,y) \le \frac{\pi}{2}$ and $\mathfrak{g}(x,y)\ge \frac12$,
\begin{equation*}\begin{aligned}
I(x,y)=\frac{1}{\sqrt{2\mathfrak{g}(x,y)}}\int_{-\delta_0}^{\delta_0} \frac{1}{\sqrt{\frac{\mathfrak{g}_1^2(x,y)}{2}+1-\cos a}}\,da+O(1).
\end{aligned}\end{equation*}
Choosing $\delta_0$ small enough such that $\cos a-1+\frac{a^2}{2} \le \frac{1-\cos a}{2}$ for all $|a|\le \delta_0$, we obtain
\begin{equation*}\begin{aligned}
I(x,y)&=\frac{1}{\sqrt{2\mathfrak{g}(x,y)}}\int_{-\delta_0}^{\delta_0} \frac{1}{\sqrt{\frac{\mathfrak{g}_1^2(x,y)}{2}+\frac{a^2}{2}}}\,da+O(1)\\
&=\frac{2}{\sqrt{\mathfrak{g}(x,y)}}\int_{0}^{1} \frac{1}{\sqrt{\mathfrak{g}_1^2(x,y)+a^2}}\,da+O(1)\\
&=\frac{2}{\sqrt{\mathfrak{g}(x,y)}}\int_{0}^{\frac{1}{\mathfrak{g}_1(x,y)}} \frac{1}{\sqrt{1+a^2}}\,da+O(1).
\end{aligned}\end{equation*}
The fact $\mathfrak{g}_1(x,y)\lesssim |x-y|$ allows us to choose $\delta_1$ small enough such that $\mathfrak{g}_1(x,y) \le \frac{1}{100}$. Consequently, we have
$$
I(x,y)=\frac{2}{\sqrt{\mathfrak{g}(x,y)}} \log \frac{1}{\mathfrak{g}_1(x,y)}+O(1).
$$
A direct calculation yields
$$
\log \frac{1}{\mathfrak{g}_1(x,y)}=\log \frac{1}{|x-y|}+O(|x|+|y|+1)
$$
and since $|x-y|\le \delta_1$ is small, we also have
$$
\frac{2}{\sqrt{\mathfrak{g}(x,y)}}=\frac{2}{\sqrt{\mathfrak{g}(x,x)}}+O(1)=\frac{2}{\sqrt{1+|x|^2}}+O(1).
$$
Therefore, we obtain
$$
I(x,y)=\frac{2}{\sqrt{1+|x|^2}}\log\frac{1}{|x-y|}+O(1)
$$
when $|x-y|\le \delta_1$, which completes the proof of the Lemma.
\end{proof}
It follows from \eqref{eq sec3 D2} and Lemma \ref{le sec3 A1} that
\begin{equation*}
E(t)=\int_{\R^2}\int_{\R^2}\log \frac{1}{|x-y|}\mathbf{1}_{|x-y|\le \delta_1} \langle x \rangle w(x)w(y)\,dx\,dy+ O\left(\| \Omega^z\|_{L^1_2(\R^2\times \T)}^2\right).
\end{equation*}
Noting that $\log \frac{1}{|x-y|}\approx 1$ whenever $\delta_1 \le |x-y|\le 1$, we finally proved Theorem \ref{thm sec3 A0}.

\subsection{Velocity field}
Let $w(x,t)$ be a solution to the two-dimensional helical Euler equation \eqref{eq 2euler}. We define $$p(t):=\int_{\R^2}xw(x)\,dx,$$
we aim to estimate the derivative of $p(t)$, which has the form
\begin{equation*}
\frac{dp}{dt}=\int_{\R^2}w(x,t)Hw(x,t)\,dx.
\end{equation*}
Our main theorem in this subsection is
\begin{theorem}\label{thm sec3 A1}
For any $w\in L^1_4 \bigcap L^{\infty}_4$, there holds
\begin{equation*}\begin{aligned}
\int_{\R^2} w(x)Hw(x)\,dx=&-\frac{1}{4\pi}\int_{\R^2}\int_{\R^2}\log \frac{1}{|x-y|}\mathbf{1}_{|x-y|\le 1}\frac{x^{\perp}}{\langle x \rangle}  w(x)w(y)\,dx\,dy\\&+ O\left(\| w\|_{L^1_4(\R^2)}^2\right).
\end{aligned}\end{equation*}
\end{theorem}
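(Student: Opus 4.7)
My plan is to adapt the Green-function argument from the proof of Theorem~\ref{thm sec3 A0} to the velocity field. Starting from the vector potential
\[
A(x)=\int_{\R^2\times\T}G(x-y)\,\xi(y)\,\Omega^z(y)\,dy,
\]
so that $U=\nabla_x\wedge A$, the identity $Hw(x')=(U^1,U^2)(x',0)+x^\perp U^3(x',0)$ expresses $Hw$ as a two-dimensional kernel operator acting on $w$. Applying Lemma~\ref{le sec3 G} to decompose $G=\tfrac{1}{4\pi|\cdot|}+C+\mathcal{A}_G$, the constant $C$ drops out under the curl while $\nabla\mathcal{A}_G$ is uniformly bounded; the contribution of $\mathcal{A}_G$ to $\int w\cdot Hw\,dx$ is therefore controlled by $O(\|w\|_{L^1_4(\R^2)}^2)$ after accounting for the $\langle y\rangle$-growth from $\xi(y)$ and the $|x|$-growth contributed by $x^\perp$ in $H_2w$.

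For the principal part $G_{\mathrm{main}}=\tfrac{1}{4\pi|\cdot|}$, I would use the helical symmetry $\Omega^z(y',y_3)=w(R_{-y_3}y')$ together with the change of variables $y=(R_a y',a)$, $a\in[-\pi,\pi]$, to reduce the three-dimensional integral to a two-dimensional one with a single angular variable. The resulting velocity kernel has the schematic form $\nabla_x\int_{-\pi}^{\pi}\mathcal{N}(x,y,a)/\sqrt{|x-R_a y|^2+a^2}\,da$, where $\mathcal{N}$ collects the cross-product components involving $\xi(R_a y',a)$. Reusing the phase expansion from the proof of Lemma~\ref{le sec3 A1}, the inner angular integral contributes $\log(1/|x-y|)/\sqrt{\mathfrak{g}(x,y)}+O(1)=\log(1/|x-y|)/\langle x\rangle+O(1)$ on the region $|x-y|\le\delta_1$, while on the complement the kernel is $O(1)$ and contributes $O(\|w\|_{L^1_4(\R^2)}^2)$.

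The key step is to identify which piece of $\nabla_x$ produces the announced leading term. This is driven by the formal identity
\begin{equation*}
\nabla_x^\perp\bigl[\langle x\rangle\log\tfrac{1}{|x-y|}\bigr]=\tfrac{x^\perp}{\langle x\rangle}\log\tfrac{1}{|x-y|}-\langle x\rangle\,\tfrac{(x-y)^\perp}{|x-y|^2},
\end{equation*}
whose second, antisymmetric-in-$(x,y)$ piece integrates to zero against $w(x)w(y)$ up to the commutator $(\langle x\rangle-\langle y\rangle)(x-y)^\perp/|x-y|^2=O(1)$, which is absorbed into the $O(\|w\|_{L^1_4(\R^2)}^2)$ remainder. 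Combining the surviving piece with $H_2w=x^\perp U^3$ then reconstructs the claimed coefficient $-\tfrac{1}{4\pi}\tfrac{x^\perp}{\langle x\rangle}$ in front of $\log(1/|x-y|)\mathbf{1}_{|x-y|\le 1}$; the cut-off at $|x-y|=1$ costs only $O(1)$ inside the logarithm and is absorbed into the remainder exactly as in the proof of Theorem~\ref{thm sec3 A0}.

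The hardest part will be the algebraic bookkeeping of how $\nabla_x$ distributes across the three components $U^1,U^2,U^3$ of the curl, and how these recombine with the $H_2$-contribution to give the single clean coefficient $x^\perp/\langle x\rangle$ after the antisymmetric cancellations are isolated. Verifying that every subleading contribution — the commutator above, the derivative of the prefactor $1/\sqrt{\mathfrak{g}(x,y)}$, and the remainder from $\mathcal{A}_G$ — integrates to $O(\|w\|_{L^1_4(\R^2)}^2)$ is what forces the stronger $L^1_4$ weight compared with the $L^1_2$ weight that sufficed in Theorem~\ref{thm sec3 A0}.
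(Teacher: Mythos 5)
Your setup is fine and, modulo packaging, coincides with the paper's starting point: taking the curl of the vector potential simply reproduces the modified Biot--Savart kernel of Proposition \ref{prop 3 euler}, and peeling off the constant and $\mathcal{A}_G$ via Lemma \ref{le sec3 G} is harmless. Your antisymmetry cancellation is also morally the right one: the paper kills the most singular contribution by symmetrizing $\varphi(x,y)K_0^*(x,y)$ in $(x,y)$, using that $K_0^*$ is symmetric and that $\varphi(x,y)+\varphi(y,x)=-|x-y|^2x^{\perp}+(x\cdot(x-y))(x-y)^{\perp}$ gains two powers of $|x-y|$ against the $|x-y|^{-2}$ singularity of $K_0^*$ from Lemma \ref{le sec3 A0}; your commutator bound $|\la x\ra-\la y\ra|\le|x-y|$ plays exactly that role.

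The genuine gap is in how you produce the leading term. First, the intermediate claim that after differentiation the inner angular integral contributes $\log(1/|x-y|)/\sqrt{\mathfrak{g}(x,y)}+O(1)$ is false: the differentiated kernel is of the type $\int_{-\pi}^{\pi}\left(|x-R_ay|^2+a^2\right)^{-3/2}da$, which by Lemma \ref{le sec3 A0} behaves like $|x-y|^{-2}$, not logarithmically; and if instead you keep $\nabla_x$ outside and differentiate the asymptotics of Lemma \ref{le sec3 A1}, that is illegitimate, since the derivative of an $O(1)$ error need not be $O(1)$. Second, the identity $\nabla_x^{\perp}\bigl[\la x\ra\log\tfrac{1}{|x-y|}\bigr]$ is an ansatz reverse-engineered from the statement, not something the curl computation yields: the third component of the vector potential carries the prefactor $\tfrac{1}{\la x\ra}$ (this is what the $1/\sqrt{\mathfrak{g}}$ in Lemma \ref{le sec3 A1} gives), not $\la x\ra$, so the term where the derivative hits the prefactor is of the form $+\tfrac{x^{\perp}}{\la x\ra^{3}}\log\tfrac{1}{|x-y|}$, with the wrong sign and the wrong power of $\la x\ra$; the missing pieces ($\partial_3 A^{1},\partial_3 A^{2}$ and the $H_2w=x^{\perp}U^3$ part) are precisely what you have not computed. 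In the paper the logarithm has a different origin: Taylor expanding the Biot--Savart numerator in the angular variable $a$ isolates the $a^2$-weighted kernel $K_2(x,y)=\int_{-\pi}^{\pi}a^2\left(|x-R_ay|^2+a^2\right)^{-3/2}da$, paired with the weight $\tfrac12\la x\ra^2x^{\perp}$ assembled jointly from $(U^1,U^2)$ and $x^{\perp}U^3$, and the asymptotic $K_2(x,y)=\tfrac{2}{\la x\ra^{3}}\log\tfrac{1}{|x-y|}+O(\la y\ra)$ of Lemma \ref{le sec3 A2} then produces the coefficient $-\tfrac{1}{4\pi}\tfrac{x^{\perp}}{\la x\ra}$. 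Your proposal contains no counterpart of this expansion or of Lemma \ref{le sec3 A2}; the "algebraic bookkeeping" you defer to the end is not a routine verification but the core of the proof.
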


Utilizing Proposition \ref{prop 3 euler}, first we need to estimate
$$
I_1(x,y):= \int_{-\pi}^{\pi} \frac{(x_1,x_2,0)-\left(R_{a}(y),a\right)}{\left( |x-R_ay|^2+a^2 \right)^\frac{3}{2}} \wedge \xi(\left(R_{a}(y),a\right))\,da.
$$
\begin{lemma} For any $x,y \in \R^2$ with $x\neq y$, there holds
\begin{equation*}
I_1(x,y)= \int_{-\pi}^{\pi} \frac{A^*_0(x,y)+a^2A^*_2(x,y)}{\left( |x-R_ay|^2+a^2 \right)^\frac{3}{2}} \,da+O\left( 1+|x|^3+|y|^3 \right),
\end{equation*}
where
$$
A^*_0(x,y):=\left( -(x-y)^{\perp},-y\cdot(x-y) \right),
$$
and
$$
A^*_2(x,y)=\left( \frac{x^{\perp}}{2},\frac{|x|^2}{2} \right) .
$$
\end{lemma}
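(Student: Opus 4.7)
The plan is to expand the numerator $N(a) := ((x_1,x_2,0)-(R_a y,a))\wedge \xi((R_a y,a))$ explicitly, Taylor expand in $a$, and compare with $A_0^*+a^2 A_2^*$ to identify the remainder, then bound the integral of this remainder against $(|x-R_a y|^2+a^2)^{-3/2}$. Componentwise computation with $\xi(v)=(v_2,-v_1,1)$ and $z=R_a y=\cos a\,y-\sin a\,y^\perp$ gives
\[
N(a)=\bigl(x_2-z_2-az_1,\ z_1-x_1-az_2,\ |y|^2-x\cdot z\bigr),
\]
and one finds $N(0)=A_0^*$, $N'(0)=(0,0,x\cdot y^\perp)$, $\tfrac12 N''(0)=(-y_2/2,\,y_1/2,\,x\cdot y/2)$. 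A direct algebraic check yields the identity $a^2 A_2^*-\tfrac{a^2}{2}N''(0)=\tfrac{a^2}{2}\bigl((x-y)^\perp,\,x\cdot(x-y)\bigr)$, hence
\[
N(a)-(A_0^*+a^2 A_2^*)\;=\;a\,(0,0,\,x\cdot y^\perp)\;-\;\tfrac{a^2}{2}\bigl((x-y)^\perp,\,x\cdot(x-y)\bigr)\;+\;\mathcal{E}(a),
\]
with $|\mathcal{E}(a)|\lesssim |a|^3(1+|x||y|)$ coming from the cubic Taylor remainders of $\cos$ and $\sin$.

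It remains to bound the integral on $[-\pi,\pi]$ of each piece divided by $(|x-R_a y|^2+a^2)^{3/2}$. The cubic tail $\mathcal{E}$ is trivial: $|a|^3/(|x-R_a y|^2+a^2)^{3/2}\le 1$ gives a contribution of size $O(1+|x||y|)$. For the explicit $a^2$ piece I would use the algebraic identity $a^2/(r^2+a^2)^{3/2}=(r^2+a^2)^{-1/2}-r^2/(r^2+a^2)^{3/2}$ to reduce everything to the integral $\int_{-\pi}^{\pi}(|x-R_a y|^2+a^2)^{-1/2}\,da$, which is already controlled by Lemma \ref{le sec3 A1} by $\langle x\rangle^{-1}\log(1/|x-y|)+O(1)$; the prefactor $(1+|x|)|x-y|$ then produces a bound of order $|x-y|\log(1/|x-y|)+(1+|x|)|x-y|$, comfortably absorbed in $O(1+|x|^2+|y|^2)$.

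The main obstacle is the stray linear-in-$a$ term $a\cdot x\cdot y^\perp$, since the naive estimate $|a|/(|x-R_a y|^2+a^2)^{3/2}\le 1/a^2$ is not integrable near $a=0$. The essential trick is the differentiation identity
\[
\frac{d}{da}\Bigl[-(|x-R_a y|^2+a^2)^{-1/2}\Bigr]=\frac{a+\cos a\,(x\cdot y^\perp)+\sin a\,(x\cdot y)}{(|x-R_a y|^2+a^2)^{3/2}},
\]
combined with the observation $R_{\pi}y=R_{-\pi}y=-y$, so that the boundary contributions at $a=\pm\pi$ cancel exactly. Integrating over $[-\pi,\pi]$ therefore yields
\[
\int_{-\pi}^{\pi}\frac{a\,da}{(|x-R_a y|^2+a^2)^{3/2}}\;=\;-\int_{-\pi}^{\pi}\frac{\cos a\,(x\cdot y^\perp)+\sin a\,(x\cdot y)}{(|x-R_a y|^2+a^2)^{3/2}}\,da,
\]
effectively trading the non-integrable $|a|/|a|^3$ weight for bounded trigonometric ones. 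Multiplying by $x\cdot y^\perp$ and invoking the key cancellation $x\cdot y^\perp=(x-y)\cdot y^\perp$ (since $y\cdot y^\perp=0$), which gives $|x\cdot y^\perp|\le|x-y||y|$, provides the extra smallness factor needed to tame the remaining integrals by the same $a^2/(\cdot)^{3/2}$ estimate used above (for the $\sin a$ piece, $|\sin a|\le|a|$ reduces it to the same type). Polynomial accounting ($|x||y|^2\lesssim|x|^3+|y|^3$ by Young) absorbs everything into $O(1+|x|^3+|y|^3)$, and assembling the three contributions completes the proof.
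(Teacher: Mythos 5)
Your algebra is sound and, up to the point of the linear-in-$a$ term, follows the paper: your $N(0)$, $N'(0)$, $\tfrac12 N''(0)$ are exactly the paper's coefficients $A_0$, $A_1$, $A_2$; the identity $a^2A_2^*-\tfrac{a^2}{2}N''(0)=\tfrac{a^2}{2}\bigl((x-y)^{\perp},\,x\cdot(x-y)\bigr)$ is correct; the cubic Taylor remainder and the explicit $a^2$ pieces are handled legitimately (your route through Lemma \ref{le sec3 A1} differs cosmetically from the paper, which uses \eqref{eq sec3 E0} via $a^2/f^{3/2}\le 1/f$, writing $f:=|x-R_ay|^2+a^2$). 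Your integration-by-parts identity is also correct, including the cancellation of the boundary terms at $a=\pm\pi$ coming from $R_{\pm\pi}y=-y$.

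The gap is the treatment of the linear term, and it starts with a misdiagnosis. The relevant naive bound is not $|a|/f^{3/2}\le 1/a^2$ but $|a|/f^{3/2}\le 1/f$ (since $a^2\le f$), which \emph{is} integrable and is exactly what \eqref{eq sec3 E0} controls. Combined with your own cancellation $|x\cdot y^{\perp}|=|(x-y)\cdot y^{\perp}|\le|x-y|\,|y|$, this gives directly
\begin{equation*}
|x\cdot y^{\perp}|\int_{-\pi}^{\pi}\frac{|a|}{f^{3/2}}\,da\;\lesssim\;|x-y|\,|y|\left(1+\frac{1}{|x-y|}\right)\;\lesssim\;1+|x|^2+|y|^2,
\end{equation*}
which is precisely the paper's two-line treatment of its $A_1$ term; no integration by parts is needed.

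Worse, the detour as you execute it does not close. After the integration by parts the bounded factor $|a|$ is replaced by quadratic prefactors: you must estimate $(x\cdot y^{\perp})^2\int_{-\pi}^{\pi}|\cos a|\,f^{-3/2}\,da$ and $|x\cdot y^{\perp}|\,|x\cdot y|\int_{-\pi}^{\pi}|\sin a|\,f^{-3/2}\,da$. The $\cos a$ factor carries no smallness in $a$, so your ``$a^2/(\cdot)^{3/2}$ estimate'' does not apply to it; the only available tool is Lemma \ref{le sec3 A0}, whose bound contains an additive constant $1$, leaving you with a term of size $(x\cdot y^{\perp})^2\le|x-y|^2|y|^2$. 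Similarly, for the $\sin a$ piece the chain $|\sin a|\le|a|$, $|a|/f^{3/2}\le 1/f$, \eqref{eq sec3 E0} leaves the term $|x\cdot y^{\perp}|\,|x\cdot y|\lesssim |x|\,|y|^2|x-y|$ multiplied by a constant. These are genuinely quartic and are \emph{not} absorbed by Young's inequality into $O(1+|x|^3+|y|^3)$: for $x=(2R,0)$, $y=(0,R)$ with $R\to\infty$ one has $|x-y|^2|y|^2=5R^4$ while $1+|x|^3+|y|^3\approx 9R^3$, so your final ``polynomial accounting'' step is false for these terms. (The post-IBP expression is in fact of the claimed size, but seeing this requires kernel bounds on $[-\pi,\pi]$ \emph{without} the additive constants, e.g. $\int_{-\pi}^{\pi}f^{-3/2}\,da\lesssim(1+\min\{|x|,|y|\})/|x-y|^2$, which neither Lemma \ref{le sec3 A0} nor \eqref{eq sec3 E0} provides; alternatively a quartic error would still suffice for the application in Theorem \ref{thm sec3 A1}, but it does not prove the lemma as stated.) Dropping the integration by parts and using $|a|\le f^{1/2}$ together with your cancellation repairs the proof and recovers the paper's argument.
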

\begin{proof}
Note that
$$
\int_{-\pi}^{\pi} \frac{|a|^3}{\left( |x-R_ay|^2+a^2 \right)^\frac{3}{2}} \,da \lesssim 1.
$$
By Taylor expansions, we obtain
$$
R_ay=e^{a\mathfrak{R}}x=y+a\mathfrak{R}x-\frac{a^2}{2}y+O(|a|^3|y|).
$$
Direct computation gives
\begin{equation*}
I_1(x,y)= \int_{-\pi}^{\pi} \frac{A_0(x,y)+aA_1(x,y)+a^2A_2(x,y)}{\left( |x-R_ay|^2+a^2 \right)^\frac{3}{2}} \,da+O\left( 1+|x|^2+|y|^2 \right),
\end{equation*}
where the coefficients are given by
$$
A_0(x,y):=\left( -(x-y)^{\perp},-y\cdot(x-y) \right),
$$
$$
A_1(x,y)=\left(0,0,y^{\perp}\cdot (x-y)\right)
$$
and
$$
A_2(x,y)=\left( \frac{y^{\perp}}{2},\frac{|y|^2}{2} \right) +\left(0,0,\frac{y\cdot (x-y)}{2}\right).
$$
By comparing $A_i$ with $A_i^*$, it remains to show that
\begin{equation}\label{eq sec42 A0}
\int_{-\pi}^{\pi} \frac{|a|\,|y|\,|x-y|+a^2\,|y|\,|x-y|+a^2\,|x-y|}{\left( |x-R_ay|^2+a^2 \right)^\frac{3}{2}} \,da \lesssim 1+|x|^2+|y|^2.
\end{equation}
Observing that for $|a| \le \pi$,
$$
\frac{|a|\,|y|\,|x-y|+a^2\,|y|\,|x-y|+a^2\,|x-y|}{\left( |x-R_ay|^2+a^2 \right)^\frac{3}{2}} \lesssim \frac{(1+|y|)\,|x-y|}{\left( |x-R_ay|^2+a^2 \right)},
$$
which gives \eqref{eq sec42 A0} by using \eqref{eq sec3 E0}.
\end{proof}
Recalling that $Hw(x)=(U^1,U^2)+x^{\perp}U^3$, the lemma above then implies that
\begin{equation}\begin{aligned}\label{eq sec3 E1}
\int_{\R^2} w(x)Hw(x)\,dx =& \frac{1}{4\pi}\int_{\R^2}\int_{\R^2}\int_{-\pi}^{\pi} \frac{(x-y)^{\perp}+\left( y\cdot (x-y) \right)x^{\perp}}{\left( |x-R_ay|^2+a^2 \right)^\frac{3}{2}} \,da\,w(x)w(y)\,dx \,dy\\
&-\frac{1}{8\pi}\int_{\R^2}\int_{\R^2}\int_{-\pi}^{\pi} \frac{a^2}{\left( |x-R_ay|^2+a^2 \right)^\frac{3}{2}} \,da\,\langle x \rangle^2 w(x)w(y)x^{\perp}\,dx \,dy \\
&+O\left( \|w\|_{L^1_4 \bigcap L^{\infty}_4} \right) \\
:=&\frac{1}{4\pi}\int_{\R^2}\int_{\R^2}\varphi(x,y)K_0^*(x,y)\,w(x)w(y)\,dx \,dy \\
&-\frac{1}{8\pi}\int_{\R^2}\int_{\R^2}K_2(x,y)\,\langle x \rangle^2 w(x)w(y)x^{\perp}\,dx \,dy \\
&+O\left( \|w\|_{L^1_4 \bigcap L^{\infty}_4} \right),
\end{aligned}\end{equation}
where
$$
\varphi(x,y):=(x-y)^{\perp}+\left( y\cdot (x-y) \right)x^{\perp},
$$
$$
K_0^*(x,y):=\int_{-\pi}^{\pi} \frac{1}{\left( |x-R_ay|^2+a^2 \right)^\frac{3}{2}} \,da
$$
and
$$
K_2(x,y):=\int_{-\pi}^{\pi} \frac{a^2}{\left( |x-R_ay|^2+a^2 \right)^\frac{3}{2}} \,da.
$$
Observe that $K_0^*(x,y)=K_0^*(y,x)$, we can rewrite
\begin{equation}\label{eq sec3 E3}\begin{aligned}
&\int_{\R^2}\int_{\R^2}\varphi(x,y)K_0^*(x,y)\,w(x)w(y)\,dx \,dy\\
=&\frac{1}{2}\int_{\R^2}\int_{\R^2}\left(\varphi(x,y)+\varphi(y,x)\right)K_0^*(x,y)\,w(x)w(y)\,dx \,dy.
\end{aligned}\end{equation}
Direct calculation shows that
\begin{equation}\label{eq sec3 E4}
\varphi(x,y)+\varphi(y,x)=-|x-y|^2\,x^{\perp}+(x\cdot(x-y))\,(x-y)^{\perp}.
\end{equation}
To further simplify the expression of \eqref{eq sec3 E1}, we will need the following lemma.
\begin{lemma}\label{le sec3 A0}
For any $x,y \in \R^2$ with $x \neq y$, there holds
\begin{equation*}
\int_{0}^{\infty} \frac{1}{\left( |x-R_ay|^2+a^2 \right)^\frac{3}{2}} \,da \lesssim 1+\frac{1+\min \{ |x|, \, |y| \}}{|x-y|^2}.
\end{equation*}
\end{lemma}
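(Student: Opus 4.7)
The plan is to reduce to a bounded range in $a$ and then split the integral at a geometric threshold balancing the two sources of smallness in $|x-R_a y|^2 + a^2$. Write $d = |x-y|$, $r = |x|$, $s = |y|$, $\rho = \min(r,s)$. Since $(|x-R_a y|^2+a^2)^{3/2} \geq a^3$, the tail contributes $\int_\pi^\infty a^{-3}\,da = O(1)$, which accounts for the constant $1$ on the right-hand side and reduces matters to bounding $\int_0^\pi \frac{da}{(|x-R_a y|^2+a^2)^{3/2}}$ by $C(1+\rho)/d^2$. Rotating $x$ and $y$ by the same angle leaves the integrand invariant (since $R_a$ commutes with every planar rotation), so I may assume the oriented angle $\theta$ between $y$ and $x$ lies in $[0,\pi]$, and then
\begin{equation*}
|x-R_a y|^2 = (r-s)^2 + 4rs\sin^2\!\left(\tfrac{\theta-a}{2}\right).
\end{equation*}

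Set $a_\ast = \min(\pi,\, d/(2(1+\rho)))$. On the inner region $[0,a_\ast]$, the two triangle inequalities $|x-R_a y| \geq d - |y-R_a y| = d - 2s\sin(a/2)$ and $|x-R_a y| = |R_{-a} x - y| \geq d - 2r\sin(a/2)$ combine to give $|x-R_a y| \geq d - \rho a \geq d/2$. Hence
\begin{equation*}
\int_0^{a_\ast}\!\frac{da}{(|x-R_a y|^2+a^2)^{3/2}} \;\leq\; \int_0^\infty\!\frac{da}{(d^2/4+a^2)^{3/2}} \;=\; \frac{4}{d^2} \;\lesssim\; \frac{1+\rho}{d^2}.
\end{equation*}

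The outer region $[a_\ast, \pi]$ is non-empty only when $d < 2\pi(1+\rho)$, and combined with $d \geq |r-s|$ this forces $\max(r,s) \lesssim 1+\rho$, whence $rs \lesssim (1+\rho)^2$. Using $\sin^2((\theta-a)/2) \gtrsim (\theta-a)^2$ for $|\theta-a| \leq \pi$, I extract the quadratic-in-$a$ minorant
\begin{equation*}
|x-R_a y|^2 + a^2 \;\gtrsim\; (r-s)^2 + rs(a-\theta)^2 + a^2 \;=\; A_\ast + (1+rs)(a-a_0)^2,
\end{equation*}
where a direct computation gives $A_\ast = (r-s)^2 + rs\theta^2/(1+rs)$ and $a_0 = rs\theta/(1+rs)$. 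The elementary identity $\int_{-\infty}^{\infty}(A+Bu^2)^{-3/2}\,du = 2/(A\sqrt B)$ then bounds the outer integral by $C/(A_\ast\sqrt{1+rs})$. From $d^2 \sim (r-s)^2 + rs\theta^2 \leq (1+rs)A_\ast$ (immediate since $(1+rs)(r-s)^2 \geq (r-s)^2$) one has $A_\ast \gtrsim d^2/(1+rs)$, and together with $\sqrt{1+rs} \lesssim 1+\rho$ this yields the outer bound $\lesssim (1+\rho)/d^2$.

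The main obstacle is the outer estimate: the naive bound $\int_{a_\ast}^\pi a^{-3}\,da \lesssim 1/a_\ast^2 = (1+\rho)^2/d^2$ loses a factor of $(1+\rho)$. Recovering the sharp inequality requires simultaneously exploiting the quadratic growth of $|x-R_a y|^2$ near its minimum at $a=\theta$ (to replace the pointwise bound $a^{-3}$ by a Lorentzian weighted by $(1+rs)^{-1/2}$) and the non-emptiness constraint $\max(r,s) \lesssim 1+\rho$ (to convert the resulting factor $\sqrt{1+rs}$ into $O(1+\rho)$).
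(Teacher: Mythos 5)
Your argument takes a genuinely different route from the paper's, and its computational core is correct. The paper proceeds by comparison and cases: assuming $|y|\ge|x|$, it disposes of $|y|\ge 2|x|$ via $|x-R_ay|\approx|x-y|$, and for $|x|\le|y|\le 2|x|$ it uses the equivalence $|x-y|\approx \theta_{x,y}|x|+|y|-|x|$, splits the integral at multiples of the angle $\theta_{x,y}$, establishes $|x-R_ay|\approx(\theta_{x,y}-a)|x|+|y|-|x|$ on $[0,\theta_{x,y}]$, and evaluates the resulting elementary integrals in two sub-cases. You instead start from the exact identity $|x-R_ay|^2=(r-s)^2+4rs\sin^2\left(\tfrac{a-\theta}{2}\right)$ (in your notation $r=|x|$, $s=|y|$, $d=|x-y|$, $\rho=\min(r,s)$), pass to a quadratic minorant, complete the square in $a$, and invoke $\int_{\R}(A+Bu^2)^{-3/2}\,du=2/(A\sqrt B)$; the factor $1+\min\{|x|,|y|\}$ emerges as $\sqrt{1+rs}$ combined with the observation that the outer region $[a_*,\pi]$ is nonempty only if $\max(r,s)\lesssim 1+\rho$. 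The inner-region bound $|x-R_ay|\ge d-\rho a\ge d/2$, the identity $rs(a-\theta)^2+a^2=(1+rs)(a-a_0)^2+rs\theta^2/(1+rs)$, the inequality $(1+rs)A_*\ge d^2$, and the Lorentzian integral are all correct. This is arguably cleaner and more quantitative than the paper's proof, since it identifies the exact minimal scale $A_*$ of $|x-R_ay|^2+a^2$ rather than comparing against piecewise-linear models; the paper's method, on the other hand, recycles machinery it also uses for the kernel $K_1$ in Lemma \ref{le sec3 A1}.

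There is, however, one flawed step: the reduction to $\theta\in[0,\pi]$ is justified by rotation invariance, and that is a non sequitur. Rotating $x$ and $y$ by a common angle leaves the oriented angle from $x$ to $y$ unchanged, so no such rotation can normalize its sign; and reflection is not a symmetry of the problem, because $R_a$ rotates in a fixed sense (reflecting both points replaces $R_a$ by $R_{-a}$). The sign matters exactly where you use $\sin^2\left(\tfrac{\theta-a}{2}\right)\gtrsim(\theta-a)^2$: this needs $|\theta-a|\le\pi$, which fails for $\theta\in[-\pi,0)$ and $a$ near $\pi$ (e.g. $\theta=-\pi$, $a=\pi$ gives $\sin^2(\cdot)=0$). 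Fortunately the reduction itself is true and has a one-line proof: for $\theta\in[-\pi,0]$ and $a\in[0,\pi]$, the identity $\sin^2\tfrac{a+|\theta|}{2}-\sin^2\tfrac{a-|\theta|}{2}=\sin a\,\sin|\theta|\ge 0$ shows that $|x-R_ay|$ at angle $\theta$ dominates, pointwise on $[0,\pi]$, its value at angle $|\theta|$, so the bound for nonnegative angles implies the general one (the tail $a\ge\pi$ being $O(1)$ regardless). With that substitute for the rotation argument your proof is complete. (For what it is worth, the paper's own proof makes the same silent normalization when it writes $\theta_{x,R_ay}=\theta_{x,y}-a$.)
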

Combined this lemma with \eqref{eq sec3 E1}, \eqref{eq sec3 E3} and \eqref{eq sec3 E4}, we have
\begin{equation}\begin{aligned}\label{eq sec3 E2}
\int w(x)Hw(x)\,dx =&-\frac{1}{8\pi}\int_{\R^2}\int_{\R^2}K_2(x,y)\,\langle x \rangle^2 w(x)w(y)x^{\perp}\,dx \,dy \\
&+O\left( \|w\|_{L^1_4 \bigcap L^{\infty}_4} \right).
\end{aligned}\end{equation}

Before proving the lemma, we see that Theorem \ref{thm sec3 A1} follows directly from \eqref{eq sec3 E2} and Lemma \ref{le sec3 A0} once we establish the following:
\begin{lemma}\label{le sec3 A2}There exists a constant $\delta_1$  independent of $x,y$ such that
\begin{equation}\label{eq sec3.2 D0}
K_2(x,y)=\frac{2}{\langle x \rangle^3}\log\frac{1}{|x-y|}+O(\la y\ra)
\end{equation}
when $|x-y|\le \delta_1$ and
\begin{equation}\label{eq sec3.2 D1}
K_2(x,y) \lesssim \la y \ra
\end{equation}
when $|x-y|\ge \delta_1$.
\end{lemma}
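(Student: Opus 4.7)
The plan is to adapt the argument of Lemma \ref{le sec3 A1} to $K_2$, exploiting that on the dominant small-$|a|$ region the weight $a^2$ in the numerator can be interchanged with $2-2\cos a$ up to bounded error. A preliminary observation that simplifies the setup is the symmetry $K_2(x,y)=K_2(y,x)$, obtained from the change of variables $a\mapsto -a$ together with $|x-R_{-a}y|=|R_a x-y|$; this allows the restriction $x^\perp\cdot y\ge 0$ as in Lemma \ref{le sec3 A1}. I would then split into the far regime $|x-y|\ge \delta_1$ and the near regime $|x-y|\le \delta_1$.

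In the far regime I would choose $\delta_*:=\delta_1/(4(1+|y|))$ and split the $a$-integral at $|a|=\delta_*$. On $|a|\le \delta_*$, the crude bound $|R_ay-y|\le |a||y|$ implies $|x-R_ay|\ge |x-y|/2\ge \delta_1/2$, so the integrand is $\lesssim a^2/\delta_1^3$ and the contribution is $O(1)$; on $\delta_*\le |a|\le \pi$, the trivial bound $a^2/(|x-R_ay|^2+a^2)^{3/2}\le 1/|a|$ yields a contribution $\lesssim \log(\pi/\delta_*)\lesssim \log(1+|y|)\lesssim \la y\ra$, giving \eqref{eq sec3.2 D1}.

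In the near regime I would first discard the range $|a|\ge \delta_0$ (which contributes $O(1)$ by the bound $1/|a|$), and then replace $a^2$ in the numerator by $2-2\cos a$; the resulting pointwise error $O(|a|^4)$, divided by $(|x-R_ay|^2+a^2)^{3/2}$, integrates to $O(1)$. The same identity
\[
|x-R_ay|^2+2-2\cos a = 2\mathfrak{g}(x,y)\Bigl(\tfrac{\mathfrak{g}_1^2(x,y)}{2}+1-\cos(a+\phi(x,y)-\tfrac{\pi}{2})\Bigr)
\]
from the proof of Lemma \ref{le sec3 A1} then applies. Substituting $b=a+\phi-\pi/2$ and using that $\pi/2-\phi=O(|x-y|/\la x\ra)$ (from $|x^\perp\cdot y|\le |x||x-y|$ and $1+x\cdot y\ge \la x\ra^2/2$), together with $1-\cos b=b^2/2+O(b^4)$ and $2-2\cos a=b^2+O(|b||x-y|+|x-y|^2+b^4)$, reduces $K_2$ to the model integral
\[
\frac{1}{\mathfrak{g}^{3/2}}\int_{-\delta_0}^{\delta_0}\frac{b^2}{(\mathfrak{g}_1^2+b^2)^{3/2}}\,db = \frac{2}{\mathfrak{g}^{3/2}}\log\frac{1}{\mathfrak{g}_1}+O(1),
\]
evaluated by $b=\mathfrak{g}_1\tan\theta$.

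To conclude \eqref{eq sec3.2 D0}, I would use the expansion $\mathfrak{g}(x,y)=\la x\ra^2+O(\la x\ra\,|x-y|)$, which gives $\mathfrak{g}^{-3/2}=\la x\ra^{-3}+O(|x-y|\la x\ra^{-4})$, and the estimate $\log(1/\mathfrak{g}_1)=\log(1/|x-y|)+O(\la x\ra+\la y\ra)$ already recorded in the proof of Lemma \ref{le sec3 A1}; boundedness of $|x-y|\log(1/|x-y|)$ on $[0,\delta_1]$ and the fact that $\la x\ra\lesssim \la y\ra$ in the near regime then yield $K_2(x,y)=\frac{2}{\la x\ra^3}\log\frac{1}{|x-y|}+O(\la y\ra)$. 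The main technical difficulty I anticipate is the bookkeeping of the error terms arising from the various Taylor expansions — in particular verifying that the cross term generated by the shift $\pi/2-\phi$, which contributes $\int |b|\,|x-y|(\mathfrak{g}_1^2+b^2)^{-3/2}\,db=O(|x-y|/(\mathfrak{g}^{3/2}\mathfrak{g}_1))=O(\la x\ra^{-2})$, together with the perturbative error from replacing $\mathfrak{g}^{-3/2}$ by $\la x\ra^{-3}$, all fit within the claimed $O(\la y\ra)$ remainder.
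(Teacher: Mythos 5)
Your proposal is correct and takes essentially the same route as the paper's proof: the same symmetry reduction to $x^{\perp}\cdot y\ge 0$, the same trigonometric identity leading to the model integral $\int b^2\left(\mathfrak{g}_1^2+b^2\right)^{-3/2}\,db$, and the same final expansions of $\mathfrak{g}^{-3/2}$ and $\log\frac{1}{\mathfrak{g}_1}$. The only minor deviations are that the paper obtains the far-field bound \eqref{eq sec3.2 D1} by citing Lemma \ref{le sec3 A0} rather than by your direct splitting of the $a$-integral (both work), and that before the identity can be applied the replacement of $a^2$ by $2-2\cos a$ must also be carried out in the denominator (with another $O(1)$ error, exactly as in the paper's argument), a step your write-up glosses over but which causes no difficulty.
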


Therefore, it remains to prove Lemma \ref{le sec3 A0} and \ref{le sec3 A2}.

\begin{proof}[Proof of Lemma \ref{le sec3 A0}]
We assume without loss of generality that $|y|\ge |x|$ (the case then $|y| <|x|$ can be treated similarly). First, we consider the case when $|y|\ge 2|x|$. In this case
 \[
\frac{|x-y|}{3} \le |x-R_ay| \le 3|x-y|,
\]
so we have
\[
\int_{0}^{\infty} \frac{1}{\left( |x-R_ay|^2+a^2 \right)^\frac{3}{2}}\,da \lesssim  \int_{0}^{\infty} \frac{1}{\left( |x-y|^2+a^2 \right)^\frac{3}{2}} \,da \approx \frac{1}{|x-y|^2} .
\]

Next, we consider the case when $2|x| \ge |y| \ge |x|$. On one hand, it follows from the Triangle inequality that
\begin{equation*}
|x-y| \le  \left|x-\frac{y}{|y|}|x|\right|+|y|-|x|.
\end{equation*}
On the other hand, the fact $|y| \ge |x|$ implies
\begin{equation*}
\left|x-\frac{y}{|y|}|x|\right| \le |x-y|
\end{equation*}
and
\begin{equation*}
|y|-|x|\le |x-y|,
\end{equation*}
which yield
\begin{equation*}
\left|x-\frac{y}{|y|}|x|\right|+|y|-|x| \lesssim |x-y|.
\end{equation*}
Gathering the estimates above, we finally obtain
\begin{equation}\label{eq distance funciton}
|x-y| \approx  \left|x-\frac{y}{|y|}|x|\right|+|y|-|x|.
\end{equation}
Next we set $\theta_{x,y}=\angle xoy$ and assume without loss of generality that $\theta_{x,y} \in [0,\pi]$, then
\begin{equation}\label{eq sec3.1 B1}
\theta_{x,y} \approx \frac{\left|x-\frac{y}{|y|}|x|\right|}{|x|}.
\end{equation}
\emph{Case 1: $\theta_{x,y}\ge \theta_0:=10^{-4}$.} In view of \eqref{eq distance funciton} and \eqref{eq sec3.1 B1}, there holds
\begin{equation*}
\left| x-R_ay \right| \approx |x-y|
\end{equation*}
when $a \le \frac{\theta_0}{2}$. Therefore,
\begin{equation*}\begin{aligned}
\int_0^{\infty} \frac{1}{\left( |x-R_ay|^2+a^2 \right)^\frac{3}{2}}\,da &\lesssim \int_0^{\frac{\theta_0}{2}}\frac{1}{\left( |x-y|^2+a^2 \right)^\frac{3}{2}}\,da +\int_{\frac{\theta_0}{2}}^\infty \frac{1}{a^3} \,da \\
&\lesssim \frac{1}{|x-y|^2}+1.
\end{aligned}\end{equation*}
\emph{Case 2: $\theta_{x,y}\le \theta_0$,}  the integral under consideration can be estimated as follows:
\begin{align}
\int_0^{\infty} \frac{1}{\left( |x-R_ay|^2+a^2 \right)^\frac{3}{2}}\,da \lesssim&
 \int_{\frac{\pi}{3}}^{\infty} \frac{1}{\left( |x-R_ay|^2+a^2 \right)^\frac{3}{2}}\,da\label{eq sec3.1 B2} \\ &+\int_{2\theta_{x,y}}^{\frac{\pi}{3}} \frac{1}{\left( |x-R_ay|^2+a^2 \right)^\frac{3}{2}}\,da\label{eq sec3.1 B3} \\ &+  \int_0^{2\theta_{x,y}} \frac{1}{\left( |x-R_ay|^2+a^2 \right)^\frac{3}{2}}\,da \label{eq sec3.1 B4}.
\end{align}
For \eqref{eq sec3.1 B2}, a direct calculation shows that
\begin{equation*}
\int_{\frac{\pi}{3}}^{\infty} \frac{1}{\left( |x-R_ay|^2+a^2 \right)^\frac{3}{2}}\,da \le \int_{\frac{\pi}{3}}^{\infty} \frac{1}{a^3}\,da \lesssim 1.
\end{equation*}
For \eqref{eq sec3.1 B3}, since $\frac{\pi}{3} \ge a \ge 2\theta_{x,y}$ implies $|x-R_ay| \ge |x-y|$, we have
\begin{equation*}
\int_{2\theta_{x,y}}^{\frac{\pi}{3}} \frac{1}{\left( |x-R_ay|^2+a^2 \right)^\frac{3}{2}}\,da \lesssim \int_0^{\infty} \frac{1}{\left( |x-y|^2+a^2 \right)^\frac{3}{2}} \,da \approx \frac{1}{|x-y|^2}.
\end{equation*}

For \eqref{eq sec3.1 B4}, due to the fact $|x-R_ay|=|x-R_{(2\theta_{x,y}-a)}y|$, we observe that
\begin{equation*}\begin{aligned}
\int_0^{2\theta_{x,y}} \frac{1}{\left( |x-R_ay|^2+a^2 \right)^\frac{3}{2}}\,da \le& \int_0^{\theta_{x,y}} \frac{1}{\left( |x-R_ay|^2+a^2 \right)^\frac{3}{2}}\,da \\ &+ \int_{\theta_{x,y}}^{2\theta_{x,y}} \frac{1}{\left( |x-R_ay|^2+a^2 \right)^\frac{3}{2}}\,da \\
  \le& 2\int^{\theta_{x,y}}_{0} \frac{1}{\left( |x-R_ay|^2+a^2 \right)^\frac{3}{2}}\,da.
\end{aligned}\end{equation*}

To estimate the right hand side, first we use \eqref{eq distance funciton} and \eqref{eq sec3.1 B1} to conclude that $$|y-x|\approx |x-\frac{y}{|y|}|x||+|y|-|x| \approx |x||\theta_{x,y}|+|y|-|x|,$$ which implies
\begin{equation*}\begin{aligned}
|x-R_ay| &\approx |x||\theta_{x,R_ay}|+|R_ay|-|x| \\  &\approx(\theta_{x,y}-a)|x|+|y|-|x|
\end{aligned}\end{equation*}
for $0 \le a \le \theta_{x,y}$. Thus,
\begin{equation*}\begin{aligned}
\int_0^{2\theta_{x,y}} \frac{1}{\left( |x-R_ay|^2+a^2 \right)^\frac{3}{2}}\,da  &\lesssim \int_0^{\theta_{x,y}} \frac{1}{\left((\theta_{x,y}-a)^2|x|^2+(|y|-|x|)^2+a^2\right)^{\frac32}}\,da \\
&= \theta_{x,y} \int_0^1 \frac{1}{\left((1-a)^2|\theta_{x,y}x|^2+(|y|-|x|)^2+|\theta_{x,y}a|^2\right)^{\frac32}}\,da.
\end{aligned}\end{equation*}
\emph{Case 2.1, $|y|-|x|\le \theta_{x,y}|x|$.} From \eqref{eq distance funciton} and \eqref{eq sec3.1 B1}, we see that $|y-x| \approx \theta_{x,y}|x|$ and consequently,
\begin{equation*}\begin{aligned}
\int_0^{2\theta_{x,y}} \frac{1}{\left( |x-R_ay|^2+a^2 \right)^\frac{3}{2}}\,da &\lesssim \theta_{x,y} \int_0^1 \frac{1}{\left((1-a)^2|\theta_{x,y}x|^2+|\theta_{x,y}a|^2\right)^{\frac32}}\,da \\
&= \frac{1}{\theta_{x,y}^2}\int_0^1 \frac{1}{\left((1-a)^2|x|^2+a^2\right)^{\frac32}}\,da.
\end{aligned}\end{equation*}
Then a direct calculation gives
\begin{equation*}\begin{aligned}
\int_0^1 \frac{1}{\left((1-a)^2|x|^2+a^2\right)^{\frac32}}\,da \approx&\int_0^{\frac12} \frac{1}{\left(|x|^2+a^2\right)^{\frac32}}\,da +\int_{\frac12}^1 \frac{1}{\left((1-a)^2|x|^2+1\right)^{\frac32}}\,da\\
=& \int_0^{\frac12} \frac{1}{\left(|x|^2+a^2\right)^{\frac32}}\,da +\int_0^{\frac12} \frac{1}{\left(a^2|x|^2+1\right)^{\frac32}}\,da \\
\approx&  \frac{1+|x|}{|x|^2},
\end{aligned}\end{equation*}
which implies (recalling that $|y-x| \approx \theta_{x,y}|x|$ and $|y| \approx |x|$ )
\begin{equation*}\begin{aligned}
\int_0^{2\theta_{x,y}} \frac{1}{\left( |x-R_ay|^2+a^2 \right)^\frac{3}{2}}\,da &\lesssim \frac{1+|x|}{\theta^2_{x,y}|x|^2} \lesssim \frac{\la y \ra}{|x-y|^2}.
\end{aligned}\end{equation*}

\emph{Case 2.2, $|y|-|x|\ge \theta_{x,y}|x|$.} It follows from \eqref{eq distance funciton} and \eqref{eq sec3.1 B1} that $|y-x| \approx |y|-|x|$. Therefore,
\begin{equation*}\begin{aligned}
\int_0^{2\theta_{x,y}} \frac{1}{\left( |x-R_ay|^2+a^2 \right)^\frac{3}{2}}\,da &\lesssim \theta_{x,y} \int_0^1 \frac{1}{\left((|y|-|x|)^2+|\theta_{x,y}a|^2\right)^{\frac32}}\,da \\
&\lesssim \int_0^{\infty} \frac{1}{\left((|y|-|x|)^2+a^2\right)^{\frac32}}\,da \\
&\lesssim \frac{1}{(|y|-|x|)2} \lesssim \frac{1}{|y-x|^2}.
\end{aligned}\end{equation*}
Gathering these estimates, we obtain the desired conclusion.

\end{proof}
\begin{proof}[Proof of Lemma \ref{le sec3 A2}] We adopt a similar argument to that used in the proof of Lemma \ref{le sec3 A1}. Let $\delta_0, \delta_1$ be the small constants chosen in Lemma \ref{le sec3 A1}, then \eqref{eq sec3 D1} follows directly from Lemma \ref{le sec3 A0}. Assuming that $|x-y|\le \delta_1$, $x^{\perp} \cdot y \ge 0$ and define
$$
\mathfrak{g}(x,y):= \sqrt{(1+x\cdot y)^2+(x^{\perp}\cdot y)^2},
$$
$$
\phi(x,y)=\arctan \frac{1+x\cdot y}{x^{\perp}\cdot y}.
$$
and
$$
\frac{\mathfrak{g}_1^2(x,y)}{2}:=\frac{4|x-y|^2+(|x|^2-|y|^2)^2}{2\mathfrak{g}(x,y)(|x|^2+|y|^2+2+2\mathfrak{g}(x,y))} \ge0.
$$
First we note that
$$
\left|\frac{\pi}{2}-\phi(x,y)\right|\lesssim \frac{x^{\perp}\cdot y}{1+x\cdot y},
$$
together with the fact
$$
\frac{1}{1+x\cdot y}=\frac{1}{1+|x|^2}+O\left( \frac{|x-y|}{1+|x|} \right),
$$
we see that
\begin{equation}\label{eq sec3.2 D2}
\left|\frac{\pi}{2}-\phi(x,y)\right|\lesssim |x-y|.
\end{equation}
Therefore, we obtain the following expression for $a^2$,
\begin{equation*}
a^2=\sin^2(a+\phi(x,y)-\frac{\pi}{2})+O(a|x-y|)+O\left(|x-y|^2\right).
\end{equation*}
Utilizing Lemma \ref{le sec3 A1}, we arrive at
\begin{equation*}\begin{aligned}
\int_{-\pi}^{\pi} \frac{|a|\,|x-y|}{\left( |x-R_ay|^2+a^2 \right)^\frac{3}{2}}\,da \lesssim& \int_{-\pi}^{\pi} \frac{|x-y|}{ |x-R_ay|^2+a^2 }\,da \lesssim 1.
\end{aligned}\end{equation*}
Moreover, Lemma \ref{le sec3 A0} gives
\begin{equation*}\begin{aligned}
\int_{-\pi}^{\pi} \frac{|x-y|^2}{\left( |x-R_ay|^2+a^2 \right)^\frac{3}{2}}\,da \lesssim \la y \ra,
\end{aligned}\end{equation*}
which implies
\begin{equation*}\begin{aligned}
K_2(x,y)=\int_{-\pi}^{\pi} \frac{a^2}{\left( |x-R_ay|^2+a^2 \right)^\frac{3}{2}}\,da =& \int_{-\pi}^{\pi} \frac{\sin^2(a+\phi(x,y)-\frac{\pi}{2})}{\left( |x-R_ay|^2+a^2 \right)^\frac{3}{2}}\,da \\&+O(\la y \ra).
\end{aligned}\end{equation*}
Arguing as in the proof of Lemma \ref{le sec3 A1}, we obtain
\begin{equation*}\begin{aligned}
K_2(x,y)=&\int_{-\delta_0}^{\delta_0} \frac{\sin^2(a+\phi(x,y)-\frac{\pi}{2})}{\left( |x-R_{a}y|^2+2-2\cos a \right)^{3/2}}\,da \\&+\int_{-\delta_0}^{\delta_0} \frac{O\left(a^2\sin^2(a+\phi(x,y)-\frac{\pi}{2})\right)}{\left( |x-R_{a}y|^2+2-2\cos a \right)^{3/2}} \,da+O(\la y \ra).
\end{aligned}\end{equation*}
Observe that (using \eqref{eq sec3.2 D2})
$$
\frac{a^2\sin^2(a+\phi(x,y)-\frac{\pi}{2})}{\left( |x-R_{a}y|^2+2-2\cos a \right)^{3/2}} \lesssim 1+\frac{|x-y|}{\left( |x-R_{a}y|^2+2-2\cos a \right)^{1/2}},
$$
which, combined with \eqref{eq sec3 E0}, gives
\begin{equation*}\begin{aligned}
K_2(x,y)=\int_{-\delta_0}^{\delta_0} \frac{\sin^2(a+\phi(x,y)-\frac{\pi}{2})}{\left( |x-R_{a}y|^2+2-2\cos a \right)^{3/2}}\,da +O(\la y \ra).
\end{aligned}\end{equation*}
Again, arguing as in the proof of Lemma \ref{le sec3 A1}, we see
\begin{equation*}\begin{aligned}
K_2(x,y)&=\int_{-\pi}^{\pi} \frac{\sin^2(a+\phi(x,y)-\frac{\pi}{2})}{\left( |x|^2+|y|^2+2-2\mathfrak{g}(x,y) \cos\left(a+\phi(x,y)-\frac{\pi}{2}\right) \right)^{3/2}}\,da +O(\la y \ra) \\
&=\frac{1}{\left(2\mathfrak{g}(x,y)\right)^{\frac32}}\int_{-\frac{3\pi}{2}+\phi(x,y)}^{\frac{\pi}{2}+\phi(x,y)} \frac{\sin^2 a}{\left(\frac{\mathfrak{g}_1^2(x,y)}{2}+1-\cos a\right)^{\frac32}}\,da+O(\la y \ra) \\
&=\frac{1}{\left(2\mathfrak{g}(x,y)\right)^{\frac32}}\int_{-\pi}^{\pi} \frac{a^2+O\left( |a|^3 \right)}{\left(\frac{\mathfrak{g}_1^2(x,y)}{2}+\frac{a^2}{2}\right)^{\frac32}}\,da+O(\la y \ra) \\
&=\frac{1}{\left(2\mathfrak{g}(x,y)\right)^{\frac32}}\int_{-\pi}^{\pi} \frac{a^2}{\left(\frac{\mathfrak{g}_1^2(x,y)}{2}+\frac{a^2}{2}\right)^{\frac32}}\,da+O(\la y \ra).
\end{aligned}\end{equation*}
Following the same calculation as that in Lemma \ref{le sec3 A1}, we get the desired conclusion.

\end{proof}

\section{Physical quantities for helical Euler equation}
In this section, we will introduce the energy, momentum and the distance function associated with the helical Euler equation. Let $w_{\e}(x,t)$ solve the helical Euler equation \eqref{eq 2euler} with initial data $w_{\e,0}$. We define the energy
\begin{equation*}
E(t):=\int_{\R^2\times \T} (-\Delta_{\R^2\times \T})^{-1}\Omega \cdot \Omega \,dx,
\end{equation*}
the modified energy
\begin{equation*}
E^*(t):=\int_{\R^2}\int_{\R^2}\log \frac{1}{|x-y|}\mathbf{1}_{|x-y|\le 1}\langle x \rangle  w(x)w(y)\,dx\,dy,
\end{equation*}
the second momentum
\begin{equation*}
M_1(t):=\int_{\R^2} |x|^2w_{\e}(x,t)\,dx,
\end{equation*}
the high order momentum
\begin{equation*}
M_2(t):=\int_{\R^2} |x|^4w_{\e}(x,t)\,dx,
\end{equation*}
the center of gravity (assuming that $\int w_{\e}(x,t)\,dx=1$)
\begin{equation*}
p(t):=\int_{\R^2} xw_{\e}(x,t)\,dx,
\end{equation*}
the distance function
\begin{equation*}
D_1(t):=\int_{\R^2} |x-p(t)|^2w_{\e}(x,t)\,dx,
\end{equation*}
and
\begin{equation*}
D_2(t):=\int_{\R^2}\int_{\R^2} |x-y|^2w_{\e}(x,t)w_{\e}(y,t)\,dx\,dy.
\end{equation*}
We also define the following quantity which will be used to control the momentum outside origin
\begin{equation*}
M^*(t):=\int_{\R^2} |x|^2 \eta(|x|)w_{\e}(x,t)\,dx,
\end{equation*}
where $0\le \eta \le 1$ is a smooth non-negative cut-off function satisfies
\begin{equation}\label{eq def of eta}\eta(r)=\begin{cases}
0 \quad r \le 10 \\
1 \quad r>20.
\end{cases}\end{equation}
For $t=0$, we assume that the initial vorticity $w_{\e,0}(x)$  satisfies following conditions, (these conditions are natural and can be verified directly if the initial data has the form $w_{\e,0}(x)=\frac{1}{\e^2}\eta(\frac{x-(1,0)}{\e})$).
\begin{assumption}\label{as initial data}
We assume that $0 \le w_{\e,0}(x) \le \frac{C}{\e^2}$, $\int w_{\e,0}(x)\,dx=1$ and

\begin{equation*}\begin{aligned}
E(0)&=\sqrt2 \log \frac{1}{\e}+O(1) \\
M_1(0)&=1+O(\e) \\
M_2(0)&=1+O(\e) \\
p(0)&=(1,0)
\end{aligned}\end{equation*}
and
\begin{equation}\label{eq sec4.1 C9}
M^*(0) =O(\e).\,\,\,\,\,\,\,\,\,\,\,\,\,\,\,\,\,\,\,\,\,\,\,\,\,\,
\end{equation}
\end{assumption}
Then we will consider these physical quantities for $t\in [0,T)$. It is worth noting that the time $T>0$ is a fixed constant, so we have $e^{Ct} =1+O(t)$, which will be frequently used in our subsequent discussions. Now we present the main theorem of this section
\begin{theorem}
Let $w_{\e}(x,t)$ solve the helical Euler equation \eqref{eq 2euler} with initial data $w_{\e,0}$. Assuming that $w_{\e,0}$ satisfies Assumption \ref{as initial data}, then there hold
\begin{equation}\label{eq sec4 C5}
0 \le w_{\e}(x,t) \le \frac{C}{\e^2},\,\,\,\, \int_{\R^2} w_{\e}(x,t)\,dx=1
\end{equation}
and
\begin{align}
E(t)&=\sqrt2 \log \frac{1}{\e}+O(1) \label{eq sec4 C0} \\
E^*(t)&=\sqrt2 \log \frac{1}{\e}+O(1) \label{eq sec4 C1} \\
M_1(t)&=1+O(\e) \label{eq sec4 C2} \\
M_2(t)&=1+O(t). \label{eq sec4 C3}
\end{align}
\end{theorem}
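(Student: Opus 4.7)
The plan is to split the five claims into three tiers of difficulty. The first three — \eqref{eq sec4 C5}, \eqref{eq sec4 C0}, and \eqref{eq sec4 C2} — follow immediately from conservation laws. Since $\nabla\cdot Hw=0$, the transport equation \eqref{eq 2euler} preserves every $L^p$ norm, so Assumption \ref{as initial data} instantly yields $0\le w_{\e}(\cdot,t)\le C/\e^2$ and $\int w_{\e}(x,t)\,dx=1$. Theorem \ref{thm main} gives the conservation of the pseudo-energy $E(t)$ and the second moment $M_1(t)$ (their three-dimensional formulations on $\R^2\times\T$ descend to the two-dimensional quantities displayed in Section 4 through Lemma \ref{le norm of R^2 and R^3}); combining with Assumption \ref{as initial data} this provides \eqref{eq sec4 C0} and \eqref{eq sec4 C2}.

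The identity \eqref{eq sec4 C1} is then a direct corollary of Theorem \ref{thm sec3 A0}: I would write $E(t)=E^*(t)+O\bigl(\|w_{\e}(\cdot,t)\|_{L^1_2}^2\bigr)$ and use
\begin{equation*}
\|w_{\e}(\cdot,t)\|_{L^1_2}\lesssim \int w_{\e}\,dx+\int |x|^2 w_{\e}\,dx =1+M_1(t)=O(1),
\end{equation*}
which via \eqref{eq sec4 C5}--\eqref{eq sec4 C2} is bounded independently of $\e$. Hence $E^*(t)=E(t)+O(1)=\sqrt2\log\frac{1}{\e}+O(1)$.

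The remaining estimate \eqref{eq sec4 C3} requires a Gronwall argument and is the main technical obstacle. The starting point is
\begin{equation*}
\frac{d}{dt}M_2(t)=4\int_{\R^2}|x|^2\bigl(x\cdot Hw_{\e}(x,t)\bigr)\,w_{\e}(x,t)\,dx.
\end{equation*}
The decomposition $Hw=(U^1,U^2)+x^{\perp}U^3$ from Proposition \ref{prop 3 euler}, combined with the elementary identity $x\cdot x^{\perp}=0$, kills the $x^{\perp}U^3$ contribution and reduces the integrand to $x\cdot (U^1,U^2)(x)$. Writing $(U^1,U^2)(x)=\int K'(x,y)w_{\e}(y)\,dy$ where $K'$ is the planar component of the Biot-Savart kernel, I would symmetrize the resulting double integral over $(x,y)\leftrightarrow (y,x)$ and exploit the antisymmetric cancellation in $K'$ in the spirit of the identity $\varphi(x,y)+\varphi(y,x)=-|x-y|^2 x^{\perp}+(x\cdot(x-y))(x-y)^{\perp}$ used in the proof of Theorem \ref{thm sec3 A1}. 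Combined with the kernel estimates of Lemmas \ref{le sec3 A0}--\ref{le sec3 A2}, the symmetrization should leave only a mild (logarithmic, or weakly singular) contribution near $x=y$, integrable against $w_{\e}\otimes w_{\e}$ without any factor of $\|w_{\e}\|_{L^\infty}\sim \e^{-2}$, yielding
\begin{equation*}
\frac{d}{dt}M_2(t)\lesssim 1+M_1(t)+M_2(t)\lesssim 1+M_2(t).
\end{equation*}
Gronwall's lemma together with $M_2(0)=1+O(\e)$ then gives $M_2(t)\le (M_2(0)+1)e^{Ct}-1=1+O(t)$ on $[0,T)$, using $e^{Ct}=1+O(t)$.

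The hard part is controlling the symmetrized planar Biot-Savart kernel: a naive estimate $|K'(x,y)|\lesssim \la y\ra/|x-y|$ would, after multiplication by $|x|^3$ and integration, produce a bound proportional to $\|w_{\e}\|_{L^\infty}\sim \e^{-2}$, which destroys the $\e$-uniformity. Only the cancellation coming from the antisymmetric part of $K'$ together with the $x\cdot x^{\perp}=0$ trick rescues the argument, so the crux of the proof is an explicit Taylor-style decomposition of $K'(x,y)$ near $x=y$ analogous to the expansion of $I_1(x,y)$ performed in Section 3.2.
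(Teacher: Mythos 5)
Your treatment of the first four claims coincides with the paper's. For \eqref{eq sec4 C5} the paper likewise invokes that $\Omega^z_{\e}$ solves a transport equation with divergence-free velocity; \eqref{eq sec4 C0} and \eqref{eq sec4 C2} are exactly the conservation laws of Theorem \ref{thm main} transferred to the plane (the paper uses Lemma \ref{le sec2 C7}, you use Lemma \ref{le norm of R^2 and R^3}; either works); and \eqref{eq sec4 C1} is obtained, as you say, from Theorem \ref{thm sec3 A0} together with $\|w_{\e}(\cdot,t)\|_{L^1_2}\lesssim 1+M_1(t)=O(1)$.

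The gap is in \eqref{eq sec4 C3}, which is indeed the only hard claim. The paper does not touch the two-dimensional kernel here: it computes $\frac{d}{dt}\int \phi(|x'|)\Omega^z\,dx$ in the three-dimensional formulation, where the \emph{exact} antisymmetry $\mathfrak{g}(x,y)+\mathfrak{g}(y,x)\equiv 0$ (quoted from \cite{GZ1}, no Taylor expansion needed) replaces the weight by the difference $\frac{\dot\phi(|x'|)}{|x'|}-\frac{\dot\phi(|y'|)}{|y'|}=4\left(|x'|^2-|y'|^2\right)$ for $\phi(r)=r^4$. Since $\left||x'|^2-|y'|^2\right|\lesssim |x-y|\left(|x|+|y|\right)$, the crude bound $|\nabla G|\lesssim 1+|x-y|^{-1}$ of Lemma \ref{le sec3 G} then keeps the integrand at degree four, giving $\left|\frac{dM_2}{dt}\right|\lesssim 1+M_2$ and closing Gronwall. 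Your two-dimensional route, as written, does not close: the expansion of $I_1$ in Section 3.2 carries an error $O\left(1+|x|^3+|y|^3\right)$, and after multiplication by the weight $|x|^2x$ this produces degree-six terms ($|x|^6$, $|x|^3|y|^3$) that are controlled neither by $M_2$ nor by any quantity bounded uniformly in $\e$; so the inequality $\frac{d}{dt}M_2\lesssim 1+M_1+M_2$ does not follow from Lemmas \ref{le sec3 A0}--\ref{le sec3 A2} as cited. The route is salvageable, but only via the step you explicitly defer to the end: one must redo the kernel expansion isolating the \emph{planar} component. The degree-two and degree-three errors in the stated lemma sit in the third component of $I_1$, which your observation $x\cdot x^{\perp}=0$ annihilates, and the planar error is in fact only $O\left(\la x\ra+\la y\ra\right)$; with that refinement your symmetrization (which produces the factor $(|x|^2-|y|^2)(x^{\perp}\cdot y)$, of degree four after using $|x^{\perp}\cdot y|\le \min(|x|,|y|)\,|x-y|$) does close Gronwall. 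But this computation is precisely the "crux" you name without carrying out, so the proposal is incomplete on the one claim where the work lies. Two minor points: the naive bound you warn against actually costs only $O(\e^{-1})$, since $\int |x-y|^{-1}w_{\e}(y)\,dy\lesssim \|w_{\e}\|_{L^1}^{1/2}\|w_{\e}\|_{L^{\infty}}^{1/2}$; and to conclude the two-sided statement $M_2(t)=1+O(t)$ you need a bound on $\left|\frac{dM_2}{dt}\right|$, not merely the one-sided differential inequality you wrote.
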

\begin{proof}
Recall that $\Omega_{\e}^z(x',x_3,t):=w_{\e}(R_{-x_3}x',t)$ satisfies the three-dimensional helical Euler equation \eqref{eq 3euler z} and it follow from Lemma \ref{le sec2 C7} that
$$
\int_{\R^2}\phi(|x|)w_{\e}(x,t)\,dx=\frac{1}{2\pi}\int_{\R^2\times \T} \phi(|x'|)\Omega^z_{\e}(x,t)\,dx
$$
holds for any smooth function $\phi$. Therefore, \eqref{eq sec4 C0} and \eqref{eq sec4 C2} follow directly from Theorem \ref{thm main} and Assumption \ref{as initial data}. Moreover, \eqref{eq sec4 C5} holds trivially since $\Omega_{\e}^z(x,t)$ satisfies a transport equation and the velocity field is divergence free, as stated in \cite{GZ1}. By utilizing \eqref{eq sec4 C0}, \eqref{eq sec4 C2} and Theorem \ref{thm sec3 A0}, we obtain \eqref{eq sec4 C1}. Now it remains to prove \eqref{eq sec4 C3}. First, for any smooth function $\phi$, there hold
\begin{equation*}\begin{aligned}
\frac{d}{dt}\int_{\R^2\times \T} \phi(|x'|)\Omega^z(x,t)\,dx &=-\int_{\R^2\times \T} \phi(|x'|)\nabla \cdot \left( U(x,t)\Omega^z(x,t) \right)\,dx \\
&=\int_{\R^2 \times \T}\dot{\phi}(|x'|)\frac{(x',0)}{|x'|}\cdot U(x,t) \Omega^z (x,t)\,dx.
\end{aligned}\end{equation*}
Recall that
$$
U(x,t)=\int_{\R^2 \times \T} \nabla_xG(x-y) \wedge \xi(y) \Omega^z(y,t)\,dy,
$$
which allows us to rewrite the integral in the following form:
\begin{equation*}\begin{aligned}
\frac{d}{dt}\int_{\R^2\times \T} \phi(|x'|)\Omega^z(x,t)\,dx & =\int_{\R^2 \times \T}\int_{\R^2 \times \T}\frac{\dot{\phi}(|x'|)}{|x'|} \mathfrak{g}(x,y) \Omega^z(y,t) \Omega^z(x,t)\,dy\,dx,
\end{aligned}\end{equation*}
where
\begin{equation}\label{eq sec4 C8}\begin{aligned}
\mathfrak{g}(x,y):=&(x',0) \cdot \nabla G(x-y) \wedge \xi(y)\\
=&\Big(x_1\partial_2G(x-y)-x_2\partial_1G(x-y)\Big) +(x_1y_1+x_2y_2)\partial_3G(x-y).
\end{aligned}\end{equation}
It has been shown in \cite{GZ1} that
$$
\mathfrak{g}(x,y)+\mathfrak{g}(y,x) \equiv 0,
$$
which yields
\begin{equation}\label{eq sec4 C9}\begin{aligned}
&\frac{d}{dt}\int_{\R^2\times \T} \phi(|x'|)\Omega^z(x,t)\,dx   \\
=&\frac12 \int_{\R^2 \times \T}\int_{\R^2 \times \T}\left(\frac{\dot{\phi}(|x'|)}{|x'|}-\frac{\dot{\phi}(|y'|)}{|y'|}\right) \mathfrak{g}(x,y) \Omega^z(y,t) \Omega^z(x,t)\,dy\,dx.
\end{aligned}\end{equation}
Indeed, conservation of the second momentum is obtained by setting $\phi(r)=r^2$. Now, if we set $\phi(r)=r^4$, \eqref{eq sec4 C9} implies
\begin{equation}\label{eq sec4 C7}\begin{aligned}
\left|\frac{dM_2(t)}{dt}\right| \lesssim& \int_{\R^2 \times \T}\int_{\R^2 \times \T}|x-y|(|x|+|y|) |\mathfrak{g}(x,y)| \Omega_{\e}^z(y,t) \Omega_{\e}^z(x,t)\,dy\,dx.
\end{aligned}\end{equation}
Using Lemma \ref{le sec3 G}, we observe that
\begin{equation}\label{eq sec4.3 A0}
|\nabla G(x-y)| \lesssim 1+\frac{1}{|x-y|}.
\end{equation}
Combined with \eqref{eq sec4 C8}, there holds
$$
|x-y| \,(|x|+|y|)\, |\mathfrak{g}(x,y)| \lesssim 1+|x|^4+|y|^4.
$$
Therefore, we use \eqref{eq sec4 C5} and \eqref{eq sec4 C7} to conclude that
\begin{equation*}\begin{aligned}
\left|\frac{dM_2(t)}{dt}\right| \lesssim& 1+M_2(t),
\end{aligned}\end{equation*}
which gives \eqref{eq sec4 C3} by using Gronwall's inequality.
\end{proof}
To control the momentum outside origin, we require the following useful lemma.
\begin{lemma}\label{le sec4.3 C8} For any $t \in [0,T)$, there holds
\begin{equation}\label{eq sec4.3 B3}
\frac{dM^*}{dt} \lesssim M^*(t)+\int_{|x|\ge 10}\la x\ra w(x,t)\,dx.
\end{equation}
\end{lemma}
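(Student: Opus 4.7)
My plan is to apply the identity \eqref{eq sec4 C9}, derived en route to \eqref{eq sec4 C3}, with the test function $\phi(r) = r^2\eta(r)$. Since Lemma \ref{le sec2 C7} gives $M^*(t) = \frac{1}{2\pi}\int_{\R^2\times\T}\phi(|x'|)\Omega^z(x,t)\,dx$, this will yield
\begin{equation*}
\frac{dM^*}{dt} = \frac{1}{4\pi}\int\int h(x,y)\,\mathfrak{g}(x,y)\,\Omega^z(x,t)\,\Omega^z(y,t)\,dx\,dy,
\end{equation*}
where $h(x,y) := \psi(|x'|) - \psi(|y'|)$ with $\psi(r) := \dot\phi(r)/r = 2\eta(r) + r\dot\eta(r)$.

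I then plan to exploit two structural features of the integrand. First, $\psi$ is smooth, uniformly bounded, and supported in $\{r\ge 10\}$; in particular $h$ vanishes whenever both $|x'|,|y'|\le 10$ and satisfies both $|h|\lesssim 1$ and the Lipschitz bound $|h(x,y)|\lesssim \bigl||x'|-|y'|\bigr|\le|x-y|$, so $|h|\lesssim\min\{1,|x-y|\}$. Second, the gradient estimate \eqref{eq sec4.3 A0} together with the explicit form \eqref{eq sec4 C8} yields $|\mathfrak{g}(x,y)|\lesssim |x|(1+|y|)(1+1/|x-y|)$, and the antisymmetry $\mathfrak{g}(x,y)=-\mathfrak{g}(y,x)$ gives the symmetric bound with $x,y$ interchanged. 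Combining these, the $1/|x-y|$ singularity is absorbed by the Lipschitz factor, leaving
\begin{equation*}
|h(x,y)\mathfrak{g}(x,y)| \lesssim \min\{|x|(1+|y|),\,|y|(1+|x|)\}\cdot\big(\mathbf{1}_{|x'|\ge 10} + \mathbf{1}_{|y'|\ge 10}\big).
\end{equation*}

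With this uniform pointwise bound I will split the double integral according to the two indicators. On the $\{|x'|\ge 10\}$ piece the integrand factorises as $\int_{|x'|\ge 10}|x|\Omega^z(x,t)\,dx \cdot \int(1+|y|)\Omega^z(y,t)\,dy$; the second factor is $O(1)$ by Cauchy--Schwarz and the conservation of $M_1$. For the first, since $|x|\ge 10$ there I use $|x|\le|x|^2/10$ and convert to an $\R^2$ integral via Lemma \ref{le sec2 C7} (with $|x|^2\lesssim|x'|^2$ on $\{|x'|\ge 10\}$), reducing matters to $\int_{|y|\ge 10}|y|^2 w(y,t)\,dy$; this I split into $\{10\le|y|\le 20\}$, bounded by $20\int_{|y|\ge 10}\la y\ra w$ using $|y|^2\le 20|y|$, and $\{|y|\ge 20\}$, bounded by $M^*(t)$ since $\eta\equiv 1$ there. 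The $\{|y'|\ge 10\}$ piece is handled identically with the symmetric bound on $\mathfrak{g}$. The main obstacle is precisely the $1/|x-y|$ singularity: a direct estimate of the convolution $\int\Omega^z(y)/|x-y|\,dy$ via the $L^1\cap L^\infty$ bounds on $\Omega^z$ yields only $O(\e^{-2/3})$, which would spoil the $\e$-uniform constant required by the $\lesssim$ notation; cancelling this singularity against the Lipschitz weight $h$, which is possible because the cutoff $\eta$ in \eqref{eq def of eta} is chosen smooth rather than a sharp indicator, is the decisive step.
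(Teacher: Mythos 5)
Your proposal follows essentially the same route as the paper's proof: the paper likewise sets $\phi(r)=r^2\eta(r)$ in \eqref{eq sec4 C9}, bounds $\mathfrak{g}$ via \eqref{eq sec4 C8} and \eqref{eq sec4.3 A0}, cancels the $1/|x-y|$ singularity against the Lipschitz bound $|F(|x'|)-F(|y'|)|\lesssim \big||x'|-|y'|\big|$ on the cutoff (exactly your "decisive step"), and concludes with the same bookkeeping yielding $M^*(t)+\int_{|x|\ge 10}\la x\ra w(x,t)\,dx$. The only cosmetic differences are that you merge the paper's dichotomy $|x-y|\le 1$ versus $|x-y|\ge 1$ into the single pointwise bound $|h|\lesssim\min\{1,|x-y|\}$ and invoke the antisymmetry of $\mathfrak{g}$ for a symmetrized bound, which the paper's splitting renders unnecessary.
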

\begin{proof}
By setting $\phi(r)=r^2\eta(r)$ in \eqref{eq sec4 C9}, we obtain
\begin{equation*}
\frac{dM^*}{dt}=\frac{1}{8\pi^2} \int_{\R^2 \times \T}\int_{\R^2 \times \T}(F(|x'|)-F(|y'|)) \mathfrak{g}(x,y) \Omega^z(y,t) \Omega^z(x,t)\,dy\,dx
\end{equation*}
where
$$
F(r):= 2\eta(r)+r\dot{\eta}(r).
$$

Using \eqref{eq sec4 C5}, \eqref{eq sec4 C8} and \eqref{eq sec4.3 A0}, it follows that

\begin{equation*}\begin{aligned}
\frac{dM^*}{dt}\lesssim& \int_{\R^2 \times \T}\int_{\R^2 \times \T} \bigg| F(|x'|)-F(|y'|) \bigg| \left(  1+\frac{1}{|x-y|} \right)|x'|(1+|y'|) \Omega^z(x,t) \Omega^z(y,t)\,dx\,dy \\
:=& \int_{\left(\R^2\times \T\right)^2}  I(x,y,t) \,dx \,dy.
\end{aligned}\end{equation*}
To proceed, we first consider the integral in the region $|x-y|\le1$. In this region, $F(|x'|)-F(|y'|) \equiv 0$ if $|x'|\le 10$ and $|y'|\le 10$, which implies
\begin{equation*}\begin{aligned}
 \int_{|x-y|\le1} I(x,y,t)\,dx\,dy \le&  \int_{|x-y|\le1 , \, |x'|\le 10\, , |y|' \ge 10} I(x,y,t)\,dx\,dy\\
&+\int_{|x-y|\le1, \, |x'| \ge 10} I(x,y,t)\,dx\,dy.
\end{aligned}\end{equation*}
Noting that $\dot{F}$ is bounded, we obtain the following estimate:
$$
\bigg| F(|x'|)-F(|y'|) \bigg| \left(  1+\frac{1}{|x-y|} \right) \lesssim \frac{\bigg| F(|x'|)-F(|y'|) \bigg|}{\big||x'|-|y'|\big|} \lesssim 1.
$$
Recall that $\Omega^z(x,t)=w(R_{-x_3}x')$, and utilizing \eqref{eq sec4 C5} and \eqref{eq sec4 C3}, we can further deduce
\begin{equation}\label{eq sec4.3 B0}\begin{aligned}
 \int_{|x-y|\le1 } I(x,y,t)\,dx\,dy  \lesssim&  \int_{|y'|\ge 10} (1+|y'|) \Omega^z(y,t)\,dy \int_{\R^2\times \T} |x'| \Omega^z(x,t)\,dx \\
+&\int_{\R^2\times \T} (1+|y'|) \Omega^z(y,t)\,dy \int_{|x'|\ge 10} |x'| \Omega^z(x,t)\,dx \\
\lesssim& \int_{|x|\ge 10}\la x\ra w(x,t)\,dx.
\end{aligned}\end{equation}
Next, considering the region where $|x-y|\ge 1$, we see that $1+\frac{1}{|x-y|} \lesssim 1$, which implies
\begin{equation}\label{eq sec4.3 B1}\begin{aligned}
\int_{|x-y|\ge 1}I(x,y,t) \,dx\,dy \lesssim& \int_{\R^2\times \T} \big| F(|x'|)\big|\, |x'|\Omega^z(x,t)\,dx \int_{\R^2\times \T} \la y \ra \Omega^z(y,t)\,dy \\
&+\int_{\R^2\times \T}  |x'|\Omega^z(x,t)\,dx \int_{\R^2\times \T} \big| F(|y'|)\big|\,\la y \ra \Omega^z(y,t)\,dy \\
\lesssim& \int_{\R^2} |F(|x|)|\la x\ra w(x,t)\,dx \\
\lesssim& \int_{\R^2} \eta(|x|)\la x \ra w(x,t)\,dx+\int_{\R^2} |\dot{\eta}(x)|\la x\ra^2 w(x,t)\,dx.
\end{aligned}\end{equation}
Since $\eta(|x|)$ is supported in the region $|x| \ge 10$ and $\dot{\eta}(|x|)$ is supported in the region $10 \le |x| \le 20$, we have
$$
\la x \ra \eta(|x|) \le |x|^2 \eta(|x|)
$$
and
$$
|\dot{\eta}(x)|\la x\ra^2 \lesssim \mathbf{1}_{|x|\ge 10}(x),
$$
which, combined with \eqref{eq sec4.3 B1}, yields
$$
\int_{|x-y|\ge 1}I(x,y,t) \,dx\,dy \lesssim M^*(t)+\int_{|x|\ge 10}\la x\ra w(x,t)\,dx
$$
and hence completes the proof.
\end{proof}

Finally, we establish several straightforward equalities concerning the second momentum, center of gravity and distance function. These equalities are crucial for us to understand the long-time dynamics for helical vortex filament.
\begin{theorem}
Under Assumption \ref{as initial data}, we have
\begin{equation}\label{eq sec4 D0}
D_1(t)+|p(t)|^2=M_1(t),
\end{equation}
and
\begin{equation}\label{eq sec4 D1}
D_2(t)+2|p(t)|^2=2M_1(t).
\end{equation}
In particular, there holds
\begin{equation}\label{eq sec4 D2}
|p(t)|\le 1+O(\e).
\end{equation}
\end{theorem}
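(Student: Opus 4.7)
The three identities are purely algebraic consequences of the definitions, so my plan is to expand the squared norms inside the integrals defining $D_1$ and $D_2$, then exploit the two facts already at our disposal from \eqref{eq sec4 C5}: the total mass satisfies $\int_{\R^2} w_\e(x,t)\,dx = 1$, and $p(t) = \int_{\R^2} x\, w_\e(x,t)\,dx$ by definition.

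First, for \eqref{eq sec4 D0}, I would expand
\[
|x-p(t)|^2 = |x|^2 - 2 x\cdot p(t) + |p(t)|^2
\]
inside the integral defining $D_1(t)$. Integrating term by term against $w_\e(\cdot,t)$ and pulling $p(t)$ (which is $t$-dependent but independent of $x$) out of the integrals, the cross term contributes $-2 p(t)\cdot p(t) = -2|p(t)|^2$, while the last term contributes $|p(t)|^2$ using unit mass. Collecting, $D_1(t) = M_1(t) - |p(t)|^2$, which rearranges to \eqref{eq sec4 D0}.

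The identity \eqref{eq sec4 D1} follows along the same lines by expanding $|x-y|^2 = |x|^2 - 2 x\cdot y + |y|^2$ inside the double integral for $D_2(t)$. The two diagonal terms each contribute $M_1(t)$ after integrating out the other variable against unit mass, and the cross term produces $-2 p(t)\cdot p(t) = -2|p(t)|^2$ by Fubini. Hence $D_2(t) = 2M_1(t) - 2|p(t)|^2$, giving \eqref{eq sec4 D1}.

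Finally, for \eqref{eq sec4 D2}, note that $w_\e(\cdot,t)\ge 0$ by \eqref{eq sec4 C5}, so $D_1(t)\ge 0$. Combining this with \eqref{eq sec4 D0} gives $|p(t)|^2 \le M_1(t)$, and invoking the already-proved estimate \eqref{eq sec4 C2}, namely $M_1(t) = 1 + O(\e)$, we obtain $|p(t)|^2 \le 1 + O(\e)$, hence $|p(t)| \le 1 + O(\e)$. There is no real obstacle here; the only thing to be careful about is that $p(t)$ must be treated as a constant vector when integrating in $x$, and that the nonnegativity of $w_\e$ (rather than just the sign of $M_1$) is what converts \eqref{eq sec4 D0} into the inequality \eqref{eq sec4 D2}.
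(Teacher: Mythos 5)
Your proof is correct and matches the paper's argument: the paper simply states that \eqref{eq sec4 D0} and \eqref{eq sec4 D1} ``can be checked directly'' (the direct check being exactly your expansion of the squares combined with unit mass and the definition of $p(t)$), and derives \eqref{eq sec4 D2} from the same ingredients you use, namely \eqref{eq sec4 C5}, \eqref{eq sec4 C2} and \eqref{eq sec4 D0}. Your writeup just makes explicit the details the paper leaves to the reader, including the correct observation that nonnegativity of $w_{\e}$ is what gives $D_1(t)\ge 0$.
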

\begin{proof}
Equation \eqref{eq sec4 D0} and \eqref{eq sec4 D1} can be checked directly. Equation \eqref{eq sec4 D2} follows directly from \eqref{eq sec4 C5}, \eqref{eq sec4 C2} and \eqref{eq sec4 D0}.
\end{proof}
\section{Long time dynamics for helical vortex filament}
Our main task of this section is to prove Theorem \ref{thm GZ main}. Let $w_{\e}(\cdot,t) \in L^1_4\bigcap L^{\infty}_4$ be a solution to the two-dimensional helical Euler equation \eqref{eq 2euler}. We assume throughout this section that its initial data $w_{\e,0}(x)$ satisfies Assumption \ref{as initial data}.

\subsection{Concentration of the vorticity}
Firstly, we demonstrate the existence of a point $p_{\e}^*(t) \in \R^2$ such that $w_{\e}(\cdot,t)$ concentrated near $p^*_{\e}(t)$. Inspired by the argument in \cite{Tur}, we define the set
$$
\Omega^{\e}_{\lambda}(t):=\left\{ x \bigg| Gw_{\e}(x,t)>\log \frac{1}{\e}-\frac{A}{\lambda_{\e}} \right\},
$$
where $A$ is a fixed constant that we will specified later. Here $Gw_{\e}(x,t)$ is defined by
$$
Gw_{\e}(x,t):=\int_{\R^2} \log \frac{1}{|x-y|}\mathbf{1}_{|x-y|\le 1} w_{\e}(y,t)\,dy.
$$
We aim to choose suitable $\lambda_{\e} \approx \frac{1}{\log \frac{1}{\e}}$ such that when $\e$ is small enough, there hold
\begin{equation}\label{eq sec5 A0}
\int_{\Omega^{\e}_{\lambda}(t)}w_{\e}(x,t)\,dx \ge 1-\lambda_{\e}
\end{equation}
and
\begin{equation}\label{eq sec5 A1}
diam \,\,\,\Omega^{\e}_{\lambda}(t) \le \e^{1-\alpha}
\end{equation}
for any $1\ge\alpha>0$. Once \eqref{eq sec5 A0} and \eqref{eq sec5 A1} are established, we can choose $p_{\e}^*(t) \in \Omega^{\e}_{\lambda}(t)$ and it holds that
\begin{equation}\label{eq sec5 A2}
\int_{|x-p_{\e}^*(t)|\le \e^{1-\alpha}} w_{\e}(x,t)\,dx \ge 1-\frac{C}{\log \frac{1}{\e}}.
\end{equation}
We begin with the estimates on $Gw_{\e}(x)$ and we will omit the time parameter to simplify the notation.
\begin{equation*}\begin{aligned}
Gw_{\e}(x)&=\int_{\R^2} \log \frac{1}{|x-y|}\mathbf{1}_{|x-y|\le 1} w_{\e}(y)\,dy \\
&=\int_{\R^2} \left( \log \frac{1}{\e}+\log \frac{1}{|\frac{x}{\e}-y|} \right)\mathbf{1}_{|x-\e y|\le 1} \tilde{w}_{\e}(y)\,dy,
\end{aligned}\end{equation*}
where
$$
\tilde{w}_{\e}(x):= \e^2w_{\e}(\e x).
$$
Utilizing the assumption \eqref{eq sec4 C5}, we obtain
\begin{equation}\label{eq sec5 A4}
\|\tilde{w}_{\e}\|_{L^1\bigcap L^{\infty}} \lesssim 1.
\end{equation}
Consequently,
$$
Gw_{\e}(x)\le \log \frac{1}{\e}+\int_{\R^2}\log \frac{1}{|\frac{x}{\e}-y|} \mathbf{1}_{|x-\e y|\le 1} \tilde{w}_{\e}(y)\,dy.
$$
Note that
$$
\log \frac{1}{|\frac{x}{\e}-y|} \mathbf{1}_{|x-\e y|\le 1} \,\,\le \frac{1}{|\frac{x}{\e}-y|},
$$
we have
\begin{equation}\label{eq sec5 A3}
Gw_{\e}(x)\le \log \frac{1}{\e}+C\|\tilde{w}_{\e}\|_{L^1\bigcap L^{\infty}}\le \log \frac{1}{\e}+C_1.
\end{equation}
Next we use \eqref{eq sec4 C1} to conclude that
\begin{equation*}
\int_{\R^2} \langle x\rangle w_{\e}(x)Gw_{\e}(x)\, dx \ge \sqrt2\log \frac{1}{\e}-C_2
\end{equation*}
for some $C_2>0$. Therefore,
\begin{equation*}\begin{aligned}
0&\le \int_{\R^2} \langle x\rangle w_{\e}(x)Gw_{\e}(x)\, dx - \sqrt2\log \frac{1}{\e}+C_2 \\
&=\int_{\R^2} \langle x\rangle w_{\e}(x)\left( Gw_{\e}(x)-\log \frac{1}{\e}\right) \, dx+\log \frac{1}{\e} \left( \int_{\R^2} \langle x\rangle w_{\e}(x)\, dx-\sqrt2 \right)+C_2.
\end{aligned}\end{equation*}
A key observation is that, by applying Jensen's inequality, \eqref{eq sec4 C5} and \eqref{eq sec4 C2}, there holds
\begin{equation*}\begin{aligned}
\int_{\R^2} \langle x\rangle w_{\e}(x)\, dx-\sqrt2 \le \left(\int_{\R^2}(1+|x|^2)w_{\e}(x)\,dx\right)^{\frac{1}{2}}-\sqrt2 \le C\e.
\end{aligned}\end{equation*}
Thus,
\begin{equation*}\begin{aligned}
0\le \int_{\R^2} \langle x\rangle w_{\e}(x)\left( Gw_{\e}(x)-\log \frac{1}{\e}\right) \, dx+C_3,
\end{aligned}\end{equation*}
which implies
\begin{equation*}\begin{aligned}
\int_{\R^2 \backslash \Omega^{\e}_{\lambda}} \langle x\rangle w_{\e}(x)\left(\log \frac{1}{\e}- Gw_{\e}(x)\right) \, dx\le \int_{\Omega^{\e}_{\lambda}} \langle x\rangle w_{\e}(x)\left( Gw_{\e}(x)-\log \frac{1}{\e}\right) \, dx +C_3.
\end{aligned}\end{equation*}
Using \eqref{eq sec5 A3} and the definition of $\Omega^{\e}_{\lambda}$, we arrive at
\begin{equation*}\begin{aligned}
\frac{A}{\lambda_{\e}}\int_{\R^2 \backslash \Omega^{\e}_{\lambda}} \langle x\rangle w_{\e}(x)\,dx \le C_3+C_1\|w_{\e}\|_{L^1_1} \le C_4.
\end{aligned}\end{equation*}
By setting $A=C_4$, it follows that
\begin{equation}\label{eq sec5 A6}
\int_{\R^2 \backslash \Omega^{\e}_{\lambda}} \langle x\rangle w_{\e}(x)\,dx \le \lambda_{\e},
\end{equation}
which proves \eqref{eq sec5 A0} by invoking \eqref{eq sec4 C5}.

\quad Now it remains to show that for any $0< \alpha \le1$, there holds
$$
diam \,\,\, \Omega^{\e}_{\lambda} \le 2\e^{1-\alpha}
$$
when $\e$ is small enough. By definition of $\Omega^{\e}_{\lambda}$, there holds
\begin{equation*}\begin{aligned}
0&< Gw_{\e}(x)-\log \frac{1}{\e}+\frac{A}{\lambda_{\e}} \\
&=\int_{\R^2} \left(\log \frac{1}{|x-y|}\mathbf{1}_{|x-y|\le 1}-\log \frac{1}{\e}\right) w_{\e}(y)\,dy+\frac{A}{\lambda_{\e}}
\end{aligned}\end{equation*}
whenever $x \in \Omega^{\e}_{\lambda}$. Thus, for any $\frac{1}{\e} \ge R \ge 1$, we have
\begin{equation}\label{eq sec5 A9}\begin{aligned}
&\int_{|y-x|\ge R\e} \left(\log \frac{1}{\e}-\log \frac{1}{|x-y|}\mathbf{1}_{|x-y|\le 1}\right) w_{\e}(y)\,dy \\ \le& \int_{|y-x|\le R\e} \left(\log \frac{1}{|x-y|}\mathbf{1}_{|x-y|\le 1}-\log \frac{1}{\e}\right) w_{\e}(y)\,dy+\frac{A}{\lambda_{\e}}.
\end{aligned}\end{equation}
On one hand,
\begin{equation}\label{eq sec5 A8}\begin{aligned}
&\int_{|y-x|\ge R\e} \left(\log \frac{1}{\e}-\log \frac{1}{|x-y|}\mathbf{1}_{|x-y|\le 1}\right) w_{\e}(y)\,dy \\=& \int_{R\e \le |x-y|\le 1} \left( \log \frac{1}{\e}-\log \frac{1}{|x-y|} \right)w_{\e}(y)\,dy \\
&+ \log \frac{1}{\e}\int_{|x-y|\ge 1}w_{\e}(y)\,dy \\
\ge& \log R \int_{R\e \le |x-y|\le 1}w_{\e}(y)\,dy +\log \frac{1}{\e}\int_{|x-y|\ge 1}w_{\e}(y)\,dy \\
\ge& \log R \int_{R\e \le |x-y|}w_{\e}(y)\,dy
\end{aligned}\end{equation}
since $\log R \le \log \frac{1}{\e}$. On the other hand,
\begin{equation*}\begin{aligned}
 &\int_{|y-x|\le R\e} \left(\log \frac{1}{|x-y|}\mathbf{1}_{|x-y|\le 1}-\log \frac{1}{\e}\right) w_{\e}(y)\,dy \\
 \le& \int_{|x-y|\le R\e}\left( \log \frac{1}{|x-y|}-\log \frac{1}{\e} \right)\mathbf{1}_{|x-y|\le 1}w_{\e}(y)\,dy \\
 =& \int_{|x-\e y|\le R\e}\log \frac{1}{|\frac{x}{\e}-y|}\mathbf{1}_{|x-\e y|\le 1}\tilde{w}_{\e}(y)\,dy \\
 \le& \int_{\R^2} \frac{\tilde{w}_{\e}(y)}{|\frac{x}{\e}-y|}\,dy \le \|\tilde{w}_{\e}\|_{L^1\bigcap L^{\infty}} \le C_6.
\end{aligned}\end{equation*}
Together with \eqref{eq sec5 A9} and \eqref{eq sec5 A8}, we see that
$$
\int_{|x-y|\ge R\e}w_{\e}(y)\,dy \le \frac{C_6+\frac{A}{\lambda_{\e}}}{\log R} \le \frac{C_7}{\lambda_{\e} \log R}
$$
provided that $\lambda_{\e} \le \frac{1}{2}$. By setting $\lambda_{\e}=\frac{4 C_7}{\alpha\log \frac{1}{\e}}$ and $R=e^{\frac{4C_7}{\lambda_{\e}}}=\frac{1}{\e^{\alpha}}$, (which satisfies $1\le R \le \frac{1}{\e}$) there holds
\begin{equation}\label{eq sec5 A7}
\int_{|x-y|\ge \e^{1-\alpha}}w_{\e}(y)\,dy \le \frac{1}{4},
\end{equation}
which implies
$$
\int_{|x-y|\le \e^{1-\alpha}}w_{\e}(y)\,dy \ge \frac{3}{4}
$$
for any $x \in \Omega^{\e}_{\lambda}$. Assuming that $diam \,\,\, \Omega^{\e}_{\lambda} \ge 2\e^{1-\alpha}$, then there exist $x_1, x_2 \in \Omega^{\e}_{\lambda}$ such that
$$B_{\e^{1-\alpha}(x_1)} \bigcap B_{\e^{1-\alpha}(x_2)}=\varnothing.$$
Thus,
$$
\int_{\R^2} w_{\e}(y)\,dy \ge \int_{B_{\e^{1-\alpha}}(x_1)}w_{\e}(y)\,dy +\int_{B_{\e^{1-\alpha}}(x_2)}w_{\e}(y)\,dy \ge \frac{3}{2}\,\,\,\,,
$$
which contradicts the assumption \eqref{eq sec4 C5} and hence completes the proof. Moreover, \eqref{eq sec5 A6} gives
\begin{equation}\label{eq sec5 B4}
\int_{|x-p_{\e}^*(t)|\ge \e^{1-\alpha}}\langle x\rangle w_{\e}(x,t)\,dx \le \frac{C_{8}}{\alpha\log \frac{1}{\e}}
\end{equation}
for any $0<\alpha\le 1 $ and $t >0$, provided that $\e$ is chosen small enough.

\subsection{Dynamics for $p_{\e}^*(t)$}
The main purpose of this subsection is to show that
\begin{theorem}For any $t \in [0,T)$ and $0<\alpha <1$, by choosing $\e$ small enough, there hold
\begin{equation}\label{eq sec5 B3}
|p_{\e}(t)-p_{\e}^*(t)| \le  \frac{C}{\log \frac{1}{\e}},
\end{equation}
\begin{equation}\label{eq sec5.2 C8}
\int_{|x-p_{\e}^*(t)|\ge \e^{1-\alpha}}\langle x\rangle^2 w_{\e}(x,t)\,dx \lesssim \frac{1}{\alpha\log \frac{1}{\e}},
\end{equation}
\begin{equation}\label{eq sec5 C0}
|p_{\e}(t)|=1+O\left(\frac{1}{\log\frac{1}{\e}}\right)
\end{equation}
and
\begin{equation}\label{eq sec5 C1}
D_2(t)=O\left(\frac{1}{\log\frac{1}{\e}}\right).
\end{equation}
\end{theorem}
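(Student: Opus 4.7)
The four estimates will be established in the order \eqref{eq sec5.2 C8}, \eqref{eq sec5 B3}, \eqref{eq sec5 C0}, \eqref{eq sec5 C1}. The first estimate, a weighted tail bound of order $\langle x\rangle^2$, carries the real content; once it is in hand, the remaining three claims are essentially algebraic consequences of the identities \eqref{eq sec4 D0} and \eqref{eq sec4 D1}.

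The strategy for \eqref{eq sec5.2 C8} is to split the integral at $|x|=20$. On the bounded piece, $\langle x\rangle^2\le 500$ and the mass factor is controlled by \eqref{eq sec5 A2}, yielding a contribution of order $1/(\alpha\log\tfrac{1}{\e})$. On the unbounded piece $|x|>20$ one has $\langle x\rangle^2\lesssim |x|^2\eta(|x|)$, so the tail is dominated by $M^*(t)$. Thus the whole problem reduces to showing $M^*(t)\lesssim 1/\log\tfrac{1}{\e}$ on $[0,T)$. To feed Lemma \ref{le sec4.3 C8} into Gronwall, I need $\int_{|x|\ge 10}\langle x\rangle w_{\e}\,dx=O(1/\log\tfrac{1}{\e})$. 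Since \eqref{eq sec5 B4} controls the tail weighted by $\langle x\rangle$ outside $B_{\e^{1-\alpha}}(p_{\e}^*(t))$, it suffices to produce a uniform bound $|p_{\e}^*(t)|\lesssim 1$, so that $\{|x|\ge 10\}\subset\{|x-p_{\e}^*(t)|\ge\e^{1-\alpha}\}$ once $\e$ is small. This bound follows from Cauchy--Schwarz and $M_1(t)=1+O(\e)$, which give $\int\langle x\rangle w_{\e}\le\sqrt{1+M_1(t)}\le 2$, combined with the concentration \eqref{eq sec5 A2}: if $|p_{\e}^*(t)|$ were large, most of the mass of $w_{\e}$ would sit at distance $\gtrsim |p_{\e}^*(t)|$ from the origin, forcing $\int\langle x\rangle w_{\e}\gtrsim|p_{\e}^*(t)|$, a contradiction. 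Gronwall applied to
\[
\frac{dM^*}{dt}\lesssim M^*(t)+\frac{1}{\log\tfrac{1}{\e}},
\]
together with $M^*(0)=O(\e)$ from Assumption~\ref{as initial data}, then delivers $M^*(t)\lesssim 1/\log\tfrac{1}{\e}$ on the fixed interval $[0,T)$.

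For \eqref{eq sec5 B3} I write $p_{\e}(t)-p_{\e}^*(t)=\int(x-p_{\e}^*(t))w_{\e}\,dx$ and split at $|x-p_{\e}^*|=\e^{1-\alpha}$: the near part is bounded trivially by $\e^{1-\alpha}$, while the far part is at most $\int_{|x-p_{\e}^*|\ge\e^{1-\alpha}}\langle x\rangle w_{\e}+|p_{\e}^*|\int_{|x-p_{\e}^*|\ge\e^{1-\alpha}}w_{\e}\lesssim 1/\log\tfrac{1}{\e}$ by \eqref{eq sec5 B4}. For \eqref{eq sec5 C0} I observe that $D_1(t)=\int|x-p_{\e}(t)|^2w_{\e}\,dx$ admits the same splitting: the near contribution is $O(\e^{2-2\alpha}+1/\log^2\tfrac{1}{\e})$ via \eqref{eq sec5 B3}, whereas the far contribution is controlled by $\int_{|x-p_{\e}^*|\ge\e^{1-\alpha}}\langle x\rangle^2 w_{\e}+|p_{\e}|^2\int_{|x-p_{\e}^*|\ge\e^{1-\alpha}}w_{\e}\lesssim 1/\log\tfrac{1}{\e}$ via \eqref{eq sec5.2 C8}. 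Inserting this into \eqref{eq sec4 D0} yields $|p_{\e}(t)|^2=M_1(t)-D_1(t)=1+O(1/\log\tfrac{1}{\e})$, whence \eqref{eq sec5 C0}. Finally \eqref{eq sec5 C1} falls out of \eqref{eq sec4 D1}, $M_1(t)=1+O(\e)$, and the bound on $|p_{\e}(t)|^2$ just obtained.

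The principal obstacle is the apparent circularity at the outset: the Gronwall step for $M^*$ requires the tail bound, which in turn requires $p_{\e}^*(t)$ to stay in a bounded region, which one is tempted to prove from a small $M^*$. The way around is to notice that the conserved quantity $M_1(t)=1+O(\e)$ already constrains $\int\langle x\rangle w_{\e}$ independently of $M^*$; combined with the concentration \eqref{eq sec5 A2} this pins $p_{\e}^*(t)$ into a bounded region without any input on $M^*$, decoupling the loop and allowing the argument to close on the whole interval $[0,T)$.
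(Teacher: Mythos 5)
Your proposal is correct, and its engine is the same as the paper's: Lemma \ref{le sec4.3 C8} plus Gronwall to get $M^*(t)\lesssim 1/\log\frac{1}{\e}$, the weighted tail bound \eqref{eq sec5 B4} for everything outside a small ball around $p^*_\e$, and the identities \eqref{eq sec4 D0}, \eqref{eq sec4 D1} to convert the $D_1$ estimate into \eqref{eq sec5 C0} and \eqref{eq sec5 C1}. The genuine difference is how you localize $p^*_\e(t)$, and hence the order of the steps. The paper first proves a rough bound $|p^*_\e|\le 100$ by contradiction against $|p_\e|\le 1+O(\e)$ (i.e.\ \eqref{eq sec4 D2}), then proves \eqref{eq sec5 B3}, and only then runs Gronwall: it is the resulting bound $|p^*_\e|\le |p_\e|+C/\log\frac{1}{\e}\le 2$ that places $\{|x|\ge 10\}$ inside $\{|x-p^*_\e|\ge\e^{1-\alpha}\}$, so that \eqref{eq sec5 B4} controls the forcing term in Lemma \ref{le sec4.3 C8}. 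You instead bound $|p^*_\e|$ with no reference to $p_\e$, via Cauchy--Schwarz, $\int\la x\ra w_\e\le\sqrt{1+M_1}\le 2$, and the concentration \eqref{eq sec5 A2}; this decouples the Gronwall step from \eqref{eq sec5 B3} and lets you prove \eqref{eq sec5.2 C8} first, which is a cleaner logical architecture (exactly the decoupling you emphasize in your closing paragraph). Two points to nail down in a full write-up: \emph{(i)} your phrase ``$|p^*_\e(t)|\lesssim 1$'' must come with a constant strictly below $10$, because the cutoff $\eta$ defining $M^*$ turns on at $|x|=10$; your argument does deliver this quantitatively (at least $3/4$ of the mass lies within $\e^{1-\alpha}$ of $p^*_\e$ for small $\e$, so $2\ge\int\la x\ra w_\e\ge\frac34\left(|p^*_\e|-\e^{1-\alpha}\right)$, giving $|p^*_\e|\le 4$, say), but the soft statement alone would not suffice. \emph{(ii)} In the far-field part of your $D_1$ estimate the factor $|p_\e|^2$ needs a bound, which you should quote from \eqref{eq sec4 D2}. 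Finally, note that you replace the paper's use of the minimizer property of $p_\e$ (the paper estimates $\int|x-p^*_\e|^2w_\e\,dx$ and uses $D_1(t)\le\int|x-p^*_\e|^2w_\e\,dx$) by a direct splitting of $D_1$ together with \eqref{eq sec5 B3}; both routes are equally valid.
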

\begin{proof}
First we show that $|p_{\e}^*(t)| \le 100$. Otherwise, we have
\begin{equation}\label{eq sec5 E0}
p_{\e}=\int_{|x-p^*_{\e}|\ge 1} xw_{\e}(x)\,dx+\int_{|x-p^*_{\e}|< 1} xw_{\e}(x)\,dx.
\end{equation}
For the first term on the right-hand side, Jensen's inequality yields
\begin{equation}\label{eq sec5 E1}
\left|\int_{|x-p^*_{\e}|\ge 1} xw_{\e}(x)\,dx\right| \le \left(\int_{\R^2} |x|^2 w_{\e}(x)\,dx\right)^{\frac12} \le2.
\end{equation}

For the second term, we have
\begin{equation}\label{eq sec5 A5}\begin{aligned}
\int_{|x-p^*_{\e}|< 1} xw_{\e}(x)\,dx&=\int_{|x-p^*_{\e}|< 1} (x-p^*_{\e})w_{\e}(x)\,dx+p^*_{\e}\int_{|x-p^*_{\e}|< 1} w_{\e}(x)\,dx.
\end{aligned}\end{equation}
By utilizing \eqref{eq sec4 C5}, it follows that
$$
\left|\int_{|x-p^*_{\e}|< 1} (x-p^*_{\e})w_{\e}(x)\,dx\right| \le \int_{\R^2} w_{\e}(x)\,dx \le2.
$$
Using \eqref{eq sec5 A2} and our assumption $|p^*_{\e}|\ge 100$, we obtain
$$
\left|p^*_{\e}\int_{|x-p^*_{\e}|< 1} w_{\e}(x)\,dx \right| \ge 50,
$$
which, together with \eqref{eq sec5 E0}-\eqref{eq sec5 A5}, implies
$$
|p_{\e}|\ge 40.
$$
This leads to a contradiction since \eqref{eq sec4 D2} states that
\begin{equation}\label{eq sec5 B6}
|p_{\e}|\le 1+O(\e).
\end{equation}
Therefore, we have shown that $|p^*_{\e}|\le 100$. Next we prove \eqref{eq sec5 B3}, we begin with the following decompositon
\begin{equation}\label{eq sec5 B5}
p_{\e}=\int_{|x-p^*_{\e}|\le \e^{\frac12}}xw_{\e}(x)\,dx+\int_{|x-p^*_{\e}|> \e^{\frac12}}xw_{\e}(x)\,dx.
\end{equation}
It follows from \eqref{eq sec4 C5} and \eqref{eq sec5 B4} that (recall that $|p^*_{\e}|\le 100$)
\begin{equation}\begin{aligned}
\int_{|x-p^*_{\e}|\le \e^{\frac12}}xw_{\e}(x)\,dx&= \int_{|x-p^*_{\e}|\le \e^{\frac12}}(x-p^*_{\e})w_{\e}(x)\,dx+p^*_{\e}\int_{|x-p^*_{\e}|\le \e^{\frac12}}w_{\e}(x)\,dx \\
&=O(\e^{\frac12})+p^*_{\e}\left( 1+O\left(\frac{1}{\log \frac{1}{\e}} \right) \right)\\
&=p^*_{\e}+O\left(\frac{1}{\log \frac{1}{\e}} \right)
\end{aligned}\end{equation}
and (again using \eqref{eq sec5 B4})
$$
\left|\int_{|x-p^*_{\e}|> \e^{\frac12}}xw_{\e}(x)\,dx \right| =O\left(\frac{1}{\log \frac{1}{\e}} \right).
$$
Thus, \eqref{eq sec5 B5} gives
$$
p_{\e}=p^*_{\e}+O\left(\frac{1}{\log \frac{1}{\e}} \right),
$$
which proves \eqref{eq sec5 B3}. Next we prove \eqref{eq sec5.2 C8}, the fact $|p^*_{\e}|\le 100$ implies that
\begin{equation}\begin{aligned}
\int_{|x-p^*_{\e}|\ge \e^{1- \alpha}} |x|^2w_{\e}(x)\,dx &\lesssim \int_{|x-p^*_{\e}|\ge 200} |x|^2w_{\e}(x)\,dx +\int_{\e^{1-\alpha}\le|x-p^*_{\e}|\le 200} |x|^2w_{\e}(x)\,dx \\
&\lesssim \int_{|x|\ge 100} |x|^2w_{\e}(x)\,dx +\int_{|x-p^*_{\e}|\ge \e^{1- \alpha}}|x|w_{\e}(x)\,dx.
\end{aligned}\end{equation}
Using \eqref{eq sec5 B4} and our definition of $M^*(t)$ in Section $4$, we get
\begin{equation}\label{eq sec5.2 C6}\begin{aligned}
\int_{|x-p^*_{\e}|\ge \e^{1- \alpha}} |x|^2w_{\e}(x)\,dx &\lesssim M^*(t)+\frac{1}{\alpha\log \frac{1}{\e}}.
\end{aligned}\end{equation}
As a consequence \eqref{eq sec4.1 C9}, \eqref{eq sec5 B3} and Lemma \ref{le sec4.3 C8}, Gronwall's inequality then gives
$$
M^*(t)\lesssim \frac{1}{\log \frac{1}{\e}}.
$$
Together with \eqref{eq sec5.2 C6}, we obtain \eqref{eq sec5.2 C8}, which leads directly to
\begin{equation}\label{eq sec5 E3}\begin{aligned}
\int_{\R^2} |x-p^*_{\e}|^2w_{\e}(x)\,dx&=\int_{|x-p^*_{\e}|\ge \e^{\frac12}} |x-p^*_{\e}|^2w_{\e}(x)\,dx+\int_{|x-p^*_{\e}|< \e^{\frac12}} |x-p^*_{\e}|^2w_{\e}(x)\,dx \\
&=O\left(\frac{1}{\log\frac{1}{\e}}\right)+O(\e)=O\left(\frac{1}{\log\frac{1}{\e}}\right).
\end{aligned}\end{equation}
It is worth noting that the center of gravity $p_{\e}$ is the minimizer of the functional $$F(p):=\int_{\R^2} |x-p|^2 w_{\e}(x)\,dx.$$
Thus, \eqref{eq sec5 E3} yields
$$
\int_{\R^2} |x-p_{\e}|^2w_{\e}(x)\,dx =O\left(\frac{1}{\log\frac{1}{\e}}\right)
$$
and \eqref{eq sec5 C0}, \eqref{eq sec5 C1} follows directly from \eqref{eq sec4 C2}, \eqref{eq sec4 D0} and \eqref{eq sec4 D1}.
\end{proof}

\subsection{Dynamics for $p_{e}(t)$}
We will consider the movement of $p_{\e}(t)$ in this subsection. Our main theorem is stated as follows:
\begin{theorem}\label{thm sec5 main}
For any $t \in [0,T)$, by chooing $\e$ small enough, there holds
\begin{equation*}
p_{\e}(t)=\left( \cos \left(V_{\e}t \right),\sin \left(V_{\e}t \right) \right) +O\left( t  \right),
\end{equation*}
where
$$
V_{\e}=-\frac{\sqrt2}{8\pi}\log\frac{1}{\e}.
$$
\end{theorem}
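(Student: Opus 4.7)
The plan is to derive the ODE $\frac{dp_\e}{dt}=V_\e\,p_\e^\perp+O(1)$ and then integrate it by conjugating with a rotation. Since $w_\e$ satisfies a transport equation with divergence-free velocity $Hw_\e$, we have $\frac{dp_\e}{dt}=\int_{\R^2}w_\e(x,t)\,Hw_\e(x,t)\,dx$. Applying Theorem~\ref{thm sec3 A1} together with the uniform bound $\|w_\e(\cdot,t)\|_{L^1_4}^2=O(1)$, which follows from \eqref{eq sec4 C5}, \eqref{eq sec4 C2} and \eqref{eq sec4 C3}, gives
$$
\frac{dp_\e}{dt}=-\frac{1}{4\pi}\int_{\R^2}\int_{\R^2}\log\frac{1}{|x-y|}\,\mathbf{1}_{|x-y|\le 1}\,\frac{x^\perp}{\langle x\rangle}\,w_\e(x)w_\e(y)\,dx\,dy+O(1).
$$

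Next I would use the concentration results of Section~5.1--5.2 to replace the slowly varying vector $x^\perp/\langle x\rangle$ by the constant $p_\e^{*\perp}/\langle p_\e^*\rangle$. For $|x-p_\e^*|\le \e^{1-\alpha}$ the pointwise replacement error is $O(\e^{1-\alpha})$, and since the unweighted log integral
$$
I(t):=\int_{\R^2}\int_{\R^2}\log\frac{1}{|x-y|}\,\mathbf{1}_{|x-y|\le 1}\,w_\e(x)w_\e(y)\,dx\,dy
$$
is bounded by $C\log(1/\e)$ via the a priori estimate \eqref{eq sec5 A3}, this contribution is $o(1)$ for any $\alpha<1$. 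On the complementary region both vectors have uniformly bounded norm because $|p_\e^*|\le 100$, and combining $Gw_\e\le \log(1/\e)+C$ with the weighted-moment bound \eqref{eq sec5.2 C8} shows that this piece is also $O(1)$. The outcome is
$$
\frac{dp_\e}{dt}=-\frac{1}{4\pi}\,\frac{p_\e^{*\perp}}{\langle p_\e^*\rangle}\,I(t)+O(1).
$$

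The third step is to relate $I(t)$ to the conserved energy. Applying the identical splitting argument to the weighted identity in Theorem~\ref{thm sec3 A0} yields $E(t)=\langle p_\e^*\rangle\,I(t)+O(1)$. Combining this with $E(t)=\sqrt 2\log(1/\e)+O(1)$ from \eqref{eq sec4 C0} and $\langle p_\e^*\rangle=\sqrt 2+O(1/\log(1/\e))$, which follows from $|p_\e^*|=1+O(1/\log(1/\e))$ via \eqref{eq sec5 B3} and \eqref{eq sec5 C0}, gives $I(t)=\log(1/\e)+O(1)$. Substituting back and using $-\frac{1}{4\pi\sqrt 2}=-\frac{\sqrt 2}{8\pi}$ together with $p_\e^{*\perp}=p_\e^\perp+O(1/\log(1/\e))$ produces the announced ODE, since $V_\e\cdot O(1/\log(1/\e))=O(1)$.

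Finally, I would integrate the ODE by setting $q_\e(t):=R_{V_\e t}\,p_\e(t)$. Since $p^\perp=-\mathfrak{R}p$ and two-dimensional rotations commute with $\mathfrak{R}$, the $V_\e$-contributions cancel in $\dot q_\e$, leaving $\dot q_\e=O(1)$. Integrating and using $q_\e(0)=p_\e(0)=(1,0)$ from Assumption~\ref{as initial data} gives $q_\e(t)=(1,0)+O(t)$, and rotating back yields $p_\e(t)=(\cos V_\e t,\sin V_\e t)+O(t)$, as claimed. The main technical obstacle in this scheme is the concentration-replacement step: one must partition the $(x,y)$-plane against $B(p_\e^*,\e^{1-\alpha})$ and use simultaneously the logarithmic a priori bound on $Gw_\e$, the mass bound \eqref{eq sec5 A2} and the second-moment bound \eqref{eq sec5.2 C8} to ensure the total error is genuinely $O(1)$ uniformly in $t\in[0,T)$.
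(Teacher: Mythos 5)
Your proposal is correct, and its skeleton (derive the ODE $\frac{dp_\e}{dt}=V_\e p_\e^\perp+O(1)$ from Theorem \ref{thm sec3 A1}, then integrate by conjugating with $R_{V_\e t}$) coincides with the paper's \eqref{eq sec5.3 A1}; but the reduction of the double integral to the ODE is organized genuinely differently. The paper splits $x^\perp=(x-p_\e)^\perp+p_\e^\perp$, kills the first piece via $\int \frac{|x-p_\e|}{\la x \ra}w_\e\,dx=O(1/\log\frac{1}{\e})$ together with Lemma \ref{le sec5 A0}, and then identifies the remaining coefficient exactly through the algebraic identity $\frac{1}{\la x \ra}=\frac{1-|x|^2}{2\la x \ra}+\frac{\la x \ra}{2}$, which makes $\frac{E^*}{2}$ appear directly and reduces everything to the single moment bound $\int \big|1-|x|\big|\,w_\e\,dx=O(1/\log\frac{1}{\e})$. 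You instead freeze the whole slowly varying coefficient $\frac{x^\perp}{\la x\ra}$ at the concentration point $p_\e^*$ (near/far splitting against $B(p_\e^*,\e^{1-\alpha})$, using $\e^{1-\alpha}\log\frac1\e=o(1)$ near and \eqref{eq sec5 B4}, \eqref{eq sec5.2 C8} with $Gw_\e\le\log\frac1\e+C$ far), and then separately invert the energy identity of Theorem \ref{thm sec3 A0} by the same freezing to extract $I(t)=\log\frac{1}{\e}+O(1)$ from \eqref{eq sec4 C0}, \eqref{eq sec5 B3}, \eqref{eq sec5 C0}. Both routes rest on the same two pillars -- the concentration estimates of Sections 5.1--5.2 and the logarithmic a priori bound on $Gw_\e$ -- so neither is stronger; the paper's identity is slicker and avoids introducing $I(t)$ and $p_\e^*$ into the ODE, while your coefficient-freezing argument is more mechanical and transparent about where each error term comes from, and you also make explicit the final rotation/integration step (with the correct sign conventions $x^\perp=-\mathfrak{R}x$, $R_\theta=e^{\theta\mathfrak{R}}$) that the paper leaves implicit after stating \eqref{eq sec5.3 A1}. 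One small remark: in your far-region estimate the mass bound \eqref{eq sec5 B4} already suffices, since the replacement error $\big|\frac{x^\perp}{\la x\ra}-\frac{p_\e^{*\perp}}{\la p_\e^*\ra}\big|$ is bounded by $2$ without any weight; the second-moment bound \eqref{eq sec5.2 C8} is only needed when you freeze $\la x\ra$ in the energy identity.
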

To prove the theorem, it suffices to show that
\begin{equation}\label{eq sec5.3 A1}
\frac{dp_{\e}}{dt}=-\frac{\sqrt2}{8\pi}\log\frac{1}{\e}p_{\e}^{\perp}+O(1).
\end{equation}
We begin with the following useful lemma,
\begin{lemma}\label{le sec5 A0}
Let $f(x,t)$ be a non-negative function satisfying
$$\int_{\R^2} f(x,t)\,dx <\infty$$
for all $t\in [0,T)$. Furthermore, assuming that $w_{\e}(x,t)$ satisfies Assumption \ref{as initial data}. Then there holds
\begin{equation}\label{eq sec 5.3 A0}
\int_{\R^2}\int_{\R^2} \log \frac{1}{|x-y|}\mathbf{1}_{|x-y|\le 1}f(x,t)w_{\e}(y,t)\,dx \,dy \le C \log\frac{1}{\e}\int_{\R^2}f(x,t)\,dx.
\end{equation}
\end{lemma}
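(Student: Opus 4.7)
The plan is to reduce the double integral to a uniform-in-$x$ bound on the inner integral against $w_\e$, and then integrate in $x$ trivially. That is, by Fubini it suffices to establish
\begin{equation*}
\sup_{x\in \R^2}\int_{\R^2}\log\frac{1}{|x-y|}\mathbf{1}_{|x-y|\le 1}\, w_\e(y,t)\,dy \;\le\; C\log\frac{1}{\e},
\end{equation*}
since multiplying by $f(x,t)$ and integrating yields the desired estimate with the same constant.

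To obtain the uniform bound, I will exploit the two bounds on $w_\e$ coming from Assumption \ref{as initial data} and \eqref{eq sec4 C5}, namely $0\le w_\e(\cdot,t)\le C\e^{-2}$ and $\|w_\e(\cdot,t)\|_{L^1}=1$, by splitting the $y$-integral at the scale $\e$. On the inner region $\{|x-y|\le \e\}$, I use the $L^\infty$ bound:
\begin{equation*}
\int_{|x-y|\le \e}\log\frac{1}{|x-y|}w_\e(y,t)\,dy \;\lesssim\; \frac{1}{\e^{2}}\int_{0}^{\e} r\log\frac{1}{r}\,dr \;\lesssim\; \log\frac{1}{\e}.
\end{equation*}
On the outer region $\{\e<|x-y|\le 1\}$, the logarithm is controlled pointwise by $\log(1/\e)$, and then the $L^1$ bound gives
\begin{equation*}
\int_{\e<|x-y|\le 1}\log\frac{1}{|x-y|}w_\e(y,t)\,dy \;\le\; \log\frac{1}{\e}\int_{\R^2}w_\e(y,t)\,dy \;=\;\log\frac{1}{\e}.
\end{equation*}

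Summing the two contributions yields the uniform bound on the inner integral, and the lemma follows. There is no real obstacle here: the only quantitative input is the interpolation between the $L^1$ mass and the $L^\infty$ bound $\|w_\e\|_\infty\lesssim \e^{-2}$, which is exactly tuned so that the logarithmic singularity produces a single factor of $\log(1/\e)$. Note also that the constant $C$ is independent of $t$, since both $\|w_\e(\cdot,t)\|_{L^1}$ and $\|w_\e(\cdot,t)\|_{L^\infty}$ are conserved (resp.\ preserved in size) along the transport flow by \eqref{eq sec4 C5}.
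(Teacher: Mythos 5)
Your proposal is correct and takes essentially the same approach as the paper: both arguments interpolate between $\|w_{\e}(\cdot,t)\|_{L^1}=1$ and $\|w_{\e}(\cdot,t)\|_{L^{\infty}}\lesssim \e^{-2}$ (valid for all $t$ by \eqref{eq sec4 C5}), the only cosmetic difference being that you split the $y$-integral at radius $\e$, while the paper rescales $y\mapsto \e y$, writes $\log\frac{1}{|x-y|}=\log\frac{1}{\e}+\log\frac{\e}{|x-y|}$, and bounds the remainder via the kernel $\frac{1}{|z|}$. Both routes produce the same uniform-in-$x$ bound $C\log\frac{1}{\e}$ on the inner integral (for $\e\le\e_0<1$), after which integration against $f\ge 0$ concludes.
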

\begin{proof}
By setting $\tilde{w}_{\e}(x,t):=\e^2 w_{\e}(\e x,t)$, a direct calculation shows
\begin{equation*}\begin{aligned}
&\int_{\R^2}\int_{\R^2} \log \frac{1}{|x-y|}\mathbf{1}_{|x-y|\le 1}f(x,t)w_{\e}(y,t)\,dx \,dy \\
=& \int_{\R^2}\int_{\R^2} \left(\log \frac{1}{\e}+ \log \frac{1}{|\frac{x}{\e}-y|} \right)\mathbf{1}_{|x-\e y|\le 1}f(x,t)\tilde{w}_{\e}(y,t)\,dx \,dy \\
\le& \log\frac{1}{\e}\int_{\R^2}\int_{\R^2} f(x,t)\tilde{w}_{\e}(y,t)\,dx \,dy \\
&+\int_{\R^2}\int_{\R^2} \frac{\tilde{w}_{\e}(y,t)}{|\frac{x}{\e}-y|}\,dy\, f(x,t) \,dx \\
\lesssim& \log \frac{1}{\e}\int_{\R^2} f(x,t)\,dx,
\end{aligned}\end{equation*}
where we have used the fact that
$$
\int_{\R^2} \frac{\tilde{w}_{\e}(y,t)}{|\frac{x}{\e}-y|}\,dy \lesssim \|\tilde{w}_{\e}\|_{L^1\bigcap L^{\infty}} \lesssim 1.
$$
\end{proof}
Now we are ready to prove our main theorem
\begin{proof}[Proof of Theorem \ref{thm sec5 main}]
To simplify the notation, we will omit the time variable $t$ and the subscript $\e$ throughout the proof. Recalling that
$$
\frac{dp}{dt}=\int_{\R^2} w(x)Hw(x)\,dx
$$
and it suffices to prove \eqref{eq sec5.3 A1}. Therefore, we use \eqref{eq sec4 C5}, \eqref{eq sec4 C3} and Theorem \ref{thm sec3 A1} to compute that
\begin{equation*}\begin{aligned}
\frac{dp}{dt}=&-\frac{1}{4\pi}\int_{\R^2}\int_{\R^2}\log \frac{1}{|x-y|}\mathbf{1}_{|x-y|\le 1}\frac{x^{\perp}}{\langle x \rangle}  w(x)w(y)\,dx\,dy+ O(1) \\
=&-\frac{1}{4\pi}\int_{\R^2}\int_{\R^2}\log \frac{1}{|x-y|}\mathbf{1}_{|x-y|\le 1}\frac{(x-p)^{\perp}}{\langle x \rangle}  w(x)w(y)\,dx\,dy+O(1) \\
&-\frac{p^{\perp}}{4\pi}\int_{\R^2}\int_{\R^2}\log \frac{1}{|x-y|}\mathbf{1}_{|x-y|\le 1}\frac{1}{\langle x \rangle}  w(x)w(y)\,dx\,dy.
\end{aligned}\end{equation*}
Utilizing \eqref{eq sec4 C5}, \eqref{eq sec5 B4}, \eqref{eq sec5 B3} and \eqref{eq sec5 C0}, we obtain
\begin{equation*}\begin{aligned}
\int_{\R^2} \frac{|x-p|}{\langle x \rangle}  w(x)\,dx \lesssim& \int_{\R^2} \frac{|x-p^*|}{\langle x \rangle}  w(x)\,dx+\int_{\R^2} \frac{|p^*-p|}{\langle x \rangle}  w(x)\,dx \\
\lesssim& \int_{|x-p^*|\le \e^{\frac12}} \frac{|x-p^*|}{\langle x \rangle}  w(x)\,dx+\int_{|x-p^*|> \e^{\frac12}} \frac{|x-p^*|}{\langle x \rangle}  w(x)\,dx \\&+O\left( \frac{1}{\log \frac{1}{\e}} \right) \\
\lesssim& O\left( \frac{1}{\log \frac{1}{\e}} \right),
\end{aligned}\end{equation*}
which, combined with Lemma \ref{le sec5 A0}, implies
$$
\int_{\R^2}\int_{\R^2}\log \frac{1}{|x-y|}\mathbf{1}_{|x-y|\le 1}\frac{(x-p)^{\perp}}{\langle x \rangle}  w(x)w(y)\,dx\,dy=O(1).
$$
Therefore, it remains to show that
\begin{equation*}
\int_{\R^2}\int_{\R^2}\log \frac{1}{|x-y|}\mathbf{1}_{|x-y|\le 1}\frac{1}{\langle x \rangle}  w(x)w(y)\,dx\,dy =\frac{\log\frac{1}{\e}}{\sqrt2}+O\left( 1 \right).
\end{equation*}
To this end, we utilize \eqref{eq sec4 C1} to compute
\begin{equation*}\begin{aligned}
&\int_{\R^2}\int_{\R^2}\log \frac{1}{|x-y|}\mathbf{1}_{|x-y|\le 1}\frac{1}{\langle x \rangle}  w(x)w(y)\,dx\,dy \\
=& \int_{\R^2}\int_{\R^2}\log \frac{1}{|x-y|}\mathbf{1}_{|x-y|\le 1}\frac{1-|x|^2}{2\langle x \rangle}  w(x)w(y)\,dx\,dy +\frac{E^*}{2} \\
=& \frac{\log\frac{1}{\e}}{\sqrt2}+\int_{\R^2}\int_{\R^2}\log \frac{1}{|x-y|}\mathbf{1}_{|x-y|\le 1}\frac{1-|x|^2}{2\langle x \rangle}  w(x)w(y)\,dx\,dy+O(1).
\end{aligned}\end{equation*}
Consequently, to completes the proof, it suffices to prove that (using Lemma \ref{le sec5 A0})
\begin{equation}\label{eq sec5.3 A2}
\int_{\R^2}\bigg| 1-|x|\bigg|  w(x)\,dx=O\left( \left(\log \frac{1}{\e} \right)^{-1} \right).
\end{equation}
So we estimate
\begin{equation*}\begin{aligned}
\int_{\R^2}\bigg| 1-|x|\bigg|  w(x)\,dx\le&\int_{\R^2}\bigg| 1-|p^*|\bigg|  w(x)\,dx+\int_{\R^2}| p^*-x|  w(x)\,dx\\
\le& \int_{\R^2}\bigg| 1-|p^*|\bigg|  w(x)\,dx+\int_{|x-p^*|\ge \e^{\frac12}}| p^*-x|  w(x)\,dx+O(\e^{\frac12}).
\end{aligned}\end{equation*}
Finally, by utilizing \eqref{eq sec4 C5}, \eqref{eq sec5 B4}, \eqref{eq sec5 B3} and \eqref{eq sec5 C0}, we arrive at
\begin{equation*}\begin{aligned}
\int_{\R^2}\bigg| 1-|x|\bigg|  w(x)\,dx=O\left( \left(\log \frac{1}{\e} \right)^{-1} \right),
\end{aligned}\end{equation*}
which proves \eqref{eq sec5.3 A2} and hence completes the proof.
\end{proof}

\end{document}